\newtheorem{theorem}{Theorem}[section]
\newtheorem{lemma}[theorem]{Lemma}
\newtheorem{proposition}[theorem]{Proposition}
\newtheorem{corollary}[theorem]{Corollary}
\theoremstyle{definition}
\newtheorem{definition}[theorem]{Definition}
\newtheorem{example}[theorem]{Example}
\newtheorem*{remark}{Remark}
\newtheorem*{remarks}{Remarks}
\newtheorem*{notation}{Notation}
\numberwithin{equation}{section}
\newcommand{\wedgewithlimits}{\operatornamewithlimits{\wedge}}
\newsavebox\dotbox
\sbox{\dotbox}{\(\displaystyle\bigodot\)}
\newlength{\dotheight}
\DeclareMathOperator*{\bigcdot}{\boldsymbol{\cdot}\rule[-\dp\dotbox]{0pt}{\dotheight}}
\font\germ=eufm10
\newcommand{\Hil}{\mathsf{H}}
\newcommand{\hil}{\mathsf{h}}
\newcommand{\Kil}{\mathsf{K}}
\newcommand{\kil}{\mathsf{k}}
\newcommand{\Fock}{\mathcal{F}}
\newcommand{\Focks}{\mathcal{F}_s}
\newcommand{\Focksminusr}{\mathcal{F}_{s-r}}
\newcommand{\Fockt}{\mathcal{F}_t}
\newcommand{\Fockrminuss}{\mathcal{F}_{r-s}}
\newcommand{\Focksplustminusr}{\mathcal{F}_{s+t-r}}
\newcommand{\Focksplust}{\mathcal{F}_{s+t}}
\newcommand{\Fockk}{\mathcal{F}^\kil}
\newcommand{\Fockh}{\mathcal{F}^\hil}
\newcommand{\Fockkone}{\mathcal{F}^{\kil_1}}
\newcommand{\Fockktwo}{\mathcal{F}^{\kil_2}}
\newcommand{\Fockkt}{\mathcal{F}^\kil_t}
\newcommand{\Fockkinfty}{\mathcal{F}^\kil_\infty}
\newcommand{\Guich}{\mathcal{G}}
\newcommand{\Guichk}{\mathcal{G}^\kil}
\newcommand{\Guichkt}{\mathcal{G}^\kil_t}
\newcommand{\Guichksplust}{\mathcal{G}^\kil_{s+t}}
\newcommand{\Guichkinfty}{\mathcal{G}^\kil_\infty}
\newcommand{\Real}{\mathbb{R}}
\newcommand{\Rplus}{\Real_+}
\newcommand{\Comp}{\mathbb{C}}
\newcommand{\Nat}{\mathbb{N}}
\newcommand{\Rat}{\mathbb{Q}}
\newcommand{\ve}{\varepsilon}
\newcommand{\vp}{\varpi}
\newcommand{\ip}[2]{\langle #1, #2 \rangle}
\newcommand{\norm}[1]{\lVert #1 \rVert}
\newcommand{\dyad}[2]{| #1 \rangle \langle #2 |}
\newcommand{\Vac}{\Omega}
\newcommand{\Vack}{\Omega^\kil}
\newcommand{\Vackt}{\Omega^\kil_t}
\newcommand{\wh}{\widehat}
\newcommand{\wt}{\widetilde}
\newcommand{\ol}{\overline}
\newcommand{\ot}{\otimes}
\newcommand{\op}{\oplus}
\newcommand{\les}{\leqslant}
\newcommand{\ges}{\geqslant}
\newcommand{\ti}{\textit}
\newcommand{\tu}{\textup}
\newcommand{\bfcdot}{{\bf \bigcdot}}
\newcommand{\CP}{C_{\mathcal{P}}}
\newcommand{\Prt}{P_{r,t}}
\newcommand{\Pst}{P_{s,t}}
\newcommand{\Prs}{P_{r,s}}
\newcommand{\Psiti}{P_{s_i,t_i}}
\newcommand{\Mst}{M_{[s,t]}}
\newcommand{\Msiti}{M_{[s_i,t_i]}}
\newcommand{\PF}{P^\Fock}
\newcommand{\PFab}{P^\Fock_{a,b}}
\newcommand{\PFrt}{P^\Fock_{r,t}}
\newcommand{\PFst}{P^\Fock_{s,t}}
\newcommand{\PFut}{P^\Fock_{u,t}}
\newcommand{\PFsu}{P^\Fock_{s,u}}
\newcommand{\PFcheckst}{P^{\Fock\,\check{}}_{s,t}}
\newcommand{\PFchecksiti}{P^{\Fock\,\check{}}_{s_i,t_i}}
\newcommand{\Prob}{\mathbb{P}}
\newcommand{\ProbPomega}{\mathbb{P}^P_\omega}
\newcommand{\ProbPphi}{\mathbb{P}^P_{\varphi}}
\newcommand{\ProbFomega}{\mathbb{P}^\Fock_\omega}
\newcommand{\ProbFcheckomega}{\mathbb{P}^{\Fock\,\check{}}_\omega}
\newcommand{\closed}{\mathcal{C}}
\newcommand{\piP}{\pi^P}
\newcommand{\piF}{\pi^\Fock}
\newcommand{\MF}{\mathcal{M}^\Fock}
\newcommand{\MFcheck}{\mathcal{M}^{\Fock\,\check{}}}
\newcommand{\Nullset}{\mathcal{N}}
\newcommand{\BFkst}{B^{\Fock,\kil}_{s,t}}
\newcommand{\BGkst}{B^{\Guich,\kil}_{s,t}}
\newcommand{\uhat}{\wh{u}}
\newcommand{\vhat}{\wh{v}}
\newcommand{\Ahat}{\wh{A}}
\newcommand{\ahat}{\wh{a}}
\newcommand{\E}{\mathcal{E}}
\newcommand{\Ei}{\mathcal{E}^i}
\newcommand{\ET}{\mathcal{E}_T}
\newcommand{\ER}{\mathcal{E}_R}
\newcommand{\ES}{\mathcal{E}_S}
\newcommand{\ERplusS}{\mathcal{E}_{R+S}}
\newcommand{\ERplusSplusT}{\mathcal{E}_{R+S+T}}
\newcommand{\ETminust}{\mathcal{E}_{T-t}}
\newcommand{\Etminusr}{\mathcal{E}_{t-r}}
\newcommand{\Er}{\mathcal{E}_r}
\newcommand{\Es}{\mathcal{E}_s}
\newcommand{\Et}{\mathcal{E}_t}
\newcommand{\Esminusr}{\mathcal{E}_{s-r}}
\newcommand{\Erminuss}{\mathcal{E}_{r-s}}
\newcommand{\Esplust}{\mathcal{E}_{s+t}}
\newcommand{\BE}{B^\mathcal{E}}
\newcommand{\BEst}{B^\mathcal{E}_{s,t}}
\newcommand{\BErs}{B^\mathcal{E}_{r,s}}
\newcommand{\BErsplust}{B^\mathcal{E}_{r,s+t}}
\newcommand{\BErplusst}{B^\mathcal{E}_{r+s,t}}
\newcommand{\BEST}{B^\mathcal{E}_{S,T}}
\newcommand{\BERS}{B^\mathcal{E}_{R,S}}
\newcommand{\BERSplusT}{B^\mathcal{E}_{R,S+T}}
\newcommand{\BERplusST}{B^\mathcal{E}_{R+S,T}}
\newcommand{\BEtTminust}{B^\mathcal{E}_{t,T-t}}
\newcommand{\BETminustt}{B^\mathcal{E}_{T-t,t}}
\newcommand{\BEonest}{B^{\Eone}_{s,t}}
\newcommand{\BFST}{B^\mathcal{F}_{S,T}}
\newcommand{\Bst}{B_{s,t}}
\newcommand{\BST}{B_{S,T}}
\newcommand{\betaE}{\beta^E}
\newcommand{\betaEst}{\beta^E_{s,t}}
\newcommand{\betaErs}{\beta^E_{r,s}}
\newcommand{\betaErsplust}{\beta^E_{r,s+t}}
\newcommand{\betaErplusst}{\beta^E_{r+s,t}}
\newcommand{\betaEbfst}{\beta^E_{{\bf s,t}}}
\newcommand{\betaEbfrs}{\beta^E_{{\bf r,s}}}
\newcommand{\betaFbfrs}{\beta^F_{{\bf r,s}}}
\newcommand{\betaEbfrt}{\beta^E_{{\bf r,t}}}
\newcommand{\betaEbfss}{\beta^E_{{\bf s,s}}}
\newcommand{\betaEbftt}{\beta^E_{{\bf t,t}}}
\newcommand{\betaEbfuv}{\beta^E_{{\bf u,v}}}
\newcommand{\betast}{\beta_{s,t}}
\newcommand{\betabfst}{\beta_{{\bf s,t}}}
\newcommand{\Ebfs}{E_{{\bf s}}}
\newcommand{\Ebft}{E_{{\bf t}}}
\newcommand{\Fbfr}{F_{{\bf r}}}
\newcommand{\Fbfs}{F_{{\bf s}}}
\newcommand{\Fbft}{F_{{\bf t}}}
\newcommand{\FSplusT}{\mathcal{F}_{S+T}}
\newcommand{\betaERbfr}{\beta^E_{R, {\bf r}}}
\newcommand{\BETbfs}{B^{\E}_{T, {\bf s}}}
\newcommand{\BETbfr}{B^{\E}_{T, {\bf r}}}
\newcommand{\BESbfs}{B^{\E}_{S, {\bf s}}}
\newcommand{\BETbft}{B^{\E}_{T, {\bf t}}}
\newcommand{\BTbft}{B_{T, {\bf t}}}
\newcommand{\betaTbft}{\beta_{T, {\bf t}}}
\newcommand{\BEbfrs}{B^{\mathcal{E}}_{{\bf r,s}}}
\newcommand{\iE}{\imath^E}
\newcommand{\iEbft}{\imath^E_{{\bf t}}}
\newcommand{\iEbfs}{\imath^E_{{\bf s}}}
\newcommand{\iEbfr}{\imath^E_{{\bf r}}}
\newcommand{\iEt}{\imath^E_{t}}
\newcommand{\iFt}{\imath^F_{t}}
\newcommand{\ibft}{\imath_{{\bf t}}}
\newcommand{\ibfs}{\imath_{{\bf s}}}
\newcommand{\ibfr}{\imath_{{\bf r}}}
\newcommand{\abft}{a_{{\bf t}}}
\newcommand{\abfs}{a_{{\bf s}}}
\newcommand{\abfr}{a_{{\bf r}}}
\newcommand{\ubft}{u_{{\bf t}}}
\newcommand{\ubfs}{u_{{\bf s}}}
\newcommand{\ubfr}{u_{{\bf r}}}
\newcommand{\imt}{\imath_{t}}
\newcommand{\is}{\imath_{s}}
\newcommand{\isplust}{\imath_{s+t}}
\newcommand{\iFbft}{\imath^F_{{\bf t}}}
\newcommand{\iFbfs}{\imath^F_{{\bf s}}}
\newcommand{\betaFst}{\beta^F_{s,t}}
\newcommand{\betaFkst}{\beta^{F,\kil}_{s,t}}
\newcommand{\betaFkrs}{\beta^{F,\kil}_{r,s}}
\newcommand{\betaFkrsplust}{\beta^{F,\kil}_{r,s+t}}
\newcommand{\betaFkrplusst}{\beta^{F,\kil}_{r+s,t}}
\newcommand{\WEst}{W^\mathcal{E}_{s,t}}
\newcommand{\IEone}{I^\mathcal{E}_{1}}
\newcommand{\IEa}{I^\mathcal{E}_{a}}
\newcommand{\IEr}{I^\mathcal{E}_{r}}
\newcommand{\IEs}{I^\mathcal{E}_{s}}
\newcommand{\IEt}{I^\mathcal{E}_{t}}
\newcommand{\IEoneminust}{I^\mathcal{E}_{1-t}}
\newcommand{\IEoneminusa}{I^\mathcal{E}_{1-a}}
\newcommand{\IER}{I^\mathcal{E}_{R}}
\newcommand{\IET}{I^\mathcal{E}_{T}}
\newcommand{\UET}{U^{\mathcal{E},T}}
 \newcommand{\UETt}{U^{\mathcal{E},T}_{t}}
 \newcommand{\UnitE}{\mathcal{U}^{\mathcal{E}}}
  \newcommand{\UnitEtensorF}{\mathcal{U}^{\mathcal{E} \ot \F}}
  \newcommand{\UnitEnorm}{\mathcal{U}^{\mathcal{E}}_1}
  \newcommand{\UnitEtwonorm}{\mathcal{U}^{\Etwo}_1}
   \newcommand{\AdditEu}{A^{\mathcal{E}}_u}
      \newcommand{\RootEu}{R^{\mathcal{E}}_u}
            \newcommand{\RootEtensorFutensorv}{R^{\E \ot \F}_{u \ot v}}
         \newcommand{\RootFv}{R^{\mathcal{F}}_v}
       \newcommand{\RootEoneuone}{R_{u^1}^{\Eone}}
             \newcommand{\RootEtwoutwo}{R^{\Etwo}_{u^2}}
         \newcommand{\TrivEu}{T^{\mathcal{E}}_u}
      \newcommand{\RootEucal}{\mathcal{R}^{\mathcal{E}}_u}
            \newcommand{\RootEoneuonecal}{\mathcal{R}_{u^1}^{\Eone}}
             \newcommand{\RootEtwoutwocal}{\mathcal{R}^{\Etwo}_{u^2}}
             \newcommand{\RootEoneucal}{\mathcal{R}_{u}^{\Eone}}
             \newcommand{\RootEtwoucal}{\mathcal{R}^{\Etwo}_{u}}
 \newcommand{\AdditEv}{A^{\mathcal{E}}_v}
         \newcommand{\RootFkVac}{R^{\F, \kil}_\Vac}
           \newcommand{\RootOmega}{R_\Omega}
  \newcommand{\EI}{\mathcal{E}^I}
    \newcommand{\EIT}{\mathcal{E}^I_T}
 \newcommand{\F}{\mathcal{F}}
  \newcommand{\G}{\mathcal{G}}
    \newcommand{\Gt}{\mathcal{G}_t}
      \newcommand{\Gone}{\mathcal{G}_1}
        \newcommand{\Goneovern}{\mathcal{G}_{1/n}}
         \newcommand{\Foneovern}{\mathcal{F}_{1/n}}
         \newcommand{\Eoneovern}{\mathcal{E}_{1/n}}
\newcommand{\FT}{\mathcal{F}_T}
\newcommand{\Fs}{\mathcal{F}_s}
\newcommand{\Ft}{\mathcal{F}_t}
\newcommand{\Fsplust}{\mathcal{F}_{s+t}}
\newcommand{\BFst}{B^\mathcal{F}_{s,t}}
  \newcommand{\UnitF}{\mathcal{U}^{\mathcal{F}}}
  \newcommand{\UnitFnorm}{\mathcal{U}^{\mathcal{F}}_1}
    \newcommand{\FI}{\mathcal{F}^I}
\newcommand{\Eone}{\mathcal{E}^1}
\newcommand{\Eonet}{\mathcal{E}^1_t}
\newcommand{\Etwo}{\mathcal{E}^2}
\newcommand{\Etwot}{\mathcal{E}^2_t}
\newcommand{\thetaEut}{\theta^{\mathcal{E},u}_t}
\newcommand{\Fminusperp}{F^{\ominus \perp}}
\newcommand{\Fminusperptminuss}{F^{\ominus \perp}_{t-s}}
\newcommand{\Fminuss}{F^{\ominus}_s}
\newcommand{\Fminust}{F^{\ominus}_t}
\newcommand{\Fminussplust}{F^{\ominus}_{s+t}}
\newcommand{\Fminustminuss}{F^{\ominus}_{t-s}}
\newcommand{\Fminusperps}{F^{\ominus \perp}_s}
\newcommand{\Fminusperpt}{F^{\ominus \perp}_t}
\newcommand{\Fminusperpsplust}{F^{\ominus \perp}_{s+t}}
\newcommand{\FFminusperp}{\mathcal{F}^{\ominus \perp}}
\newcommand{\FFminuss}{\mathcal{F}^{\ominus}_s}
\newcommand{\FFminussplust}{\mathcal{F}^{\ominus}_{s+t}}
\newcommand{\FFminusperps}{\mathcal{F}^{\ominus \perp}_s}
\newcommand{\FFminusperpt}{\mathcal{F}^{\ominus \perp}_t}
\newcommand{\FFminusperpsplust}{\mathcal{F}^{\ominus \perp}_{s+t}}
\newcommand{\FFperpr}{\mathcal{F}^{\perp}_r}
\newcommand{\FFperps}{\mathcal{F}^{\perp}_s}
\newcommand{\FFperpt}{\mathcal{F}^{\perp}_t}
\newcommand{\FFperpsplustminusr}{\mathcal{F}^{\perp}_{s+t-r}}
\newcommand{\FFperpsminusr}{\mathcal{F}^{\perp}_{s-r}}
\newcommand{\FFperptminusr}{\mathcal{F}^{\perp}_{t-r}}
\newcommand{\XEut}{X^{\E,u}_t}
\newcommand{\XEus}{X^{\E,u}_s}
\newcommand{\XEup}{X^{\E,u}_p}
\newcommand{\XEur}{X^{\E,u}_r}
\newcommand{\XEuq}{X^{\E,u}_q}
\newcommand{\XEusplust}{X^{\E,u}_{s+t}}
\newcommand{\XEu}{X^{\E,u}}
\newcommand{\SEu}{S^{\E,u}}
\newcommand{\SEut}{S^{\E,u}_t}
\newcommand{\SEus}{S^{\E,u}_s}
\newcommand{\SEur}{S^{\E,u}_r}
\newcommand{\SEuzero}{S^{\E,u}_0}
\newcommand{\jEust}{\jmath^{\E,u}_{s,t}}
\newcommand{\jEurs}{\jmath^{\E,u}_{r,s}}
\newcommand{\jEut}{\jmath^{\E,u}_t}
\newcommand{\jEur}{\jmath^{\E,u}_r}
\newcommand{\phiEut}{\phi^{\E,u}_t}
\newcommand{\phiEu}{\phi^{\E,u}}
\newcommand{\FEut}{F^{\E,u}_t}
\newcommand{\FEur}{F^{\E,u}_r}
\newcommand{\FEu}{F^{\E,u}}
\newcommand{\kEu}{\kil(\E,u)}
\newcommand{\KEu}{\Kil^{\E,u}}
\newcommand{\KEut}{\Kil^{\E,u}_t}
\newcommand{\VEu}{V^{\E,u}}
\newcommand{\KEur}{\Kil^{\E,u}_r}
\newcommand{\phiEur}{\phi^{\E,u}_r}
\DeclareMathOperator{\Int}{Int}
\DeclareMathOperator{\adjointable}{a}
   \DeclareMathOperator{\Hausdorff}{H}
\DeclareMathOperator{\Ran}{Ran}
\DeclareMathOperator{\Lin}{Lin}
\DeclareMathOperator{\im}{Im}
\DeclareMathOperator{\supp}{supp}
\DeclareMathOperator{\esssupp}{ess-supp}
\DeclareMathOperator{\weaklim}{weak-lim}
\DeclareMathOperator{\sym}{sym}
\DeclareMathOperator{\ind}{ind}
\DeclareMathOperator{\Triv}{Triv}
\DeclareMathOperator{\Root}{Root}
\DeclareMathOperator{\Borel}{Borel}
\DeclareMathOperator{\T}{T}
\newcommand{\Linbar}{\ol{\Lin}}
\newenvironment{alist}
{

\begin{enumerate}}
{\end{enumerate}}
\newenvironment{rlist}
{

\begin{enumerate}}
{\end{enumerate}}
\newcommand{\la}{\langle}
\newcommand{\ra}{\rangle}
\begin{document}
	
	\title[Addits]{Additive units of product systems}
	
	%    Information for first author
	\author[Rajarama Bhat, Martin Lindsay and Mithun Mukherjee]{B.V. Rajarama Bhat}
	%    Address of record for the research reported here
	\address{Stat-Math Unit \\
Indian Statistical Institute \\
R.V. College Post \\
Bangalore-560059 \\
India}
	\email{bhat@isibang.ac.in}
	
	%    Information for second author
	\author[]{J.\ Martin Lindsay}
\address{Department of Mathematics \& Statistics
\\
Lancaster University
\\  Lancaster LA1 4YF \\ UK}
	\email{j.m.lindsay@lancaster.ac.uk}
	
	% Information for third author
	\author[]{Mithun Mukherjee}
	\address{School of Mathematics \\
 IISER Thiruvananthapuram \\
  CET Campus \\
  Kerala - 695016 \\
   India}
	\email{mithunmukh@iisertvm.ac.in}

	%    General info
	\subjclass[2010]{Primary 46L55;    % Noncommutative dynamical systems
                    Secondary 46C05,    % Hilbert and pre-Hilbert spaces: geometry and topology
                                        % (including spaces with semidefinite inner product)
	                           46L53}  %  Noncommutative probability and statistics
	%%%%%%%%%%%%%%%%%%%%%%%%%%
%TEMPORARY
%\date{January 1, 2001 and, in revised form, June 22, 2001.}
%\date{June 25 pm, 2017.}
%%%%%%%%%%%%%%%%%%%%%%%%%%%%
	
	%\dedicatory{This paper is dedicated to our advisors.}
	
	\keywords{Arveson systems,
            inclusion systems,
            quantum dynamics,
            completely positive semigroups,
            Cantor--Bendixson derivative,
            cluster construction}
	
	\begin{abstract}
 We introduce the notion of additive units, or `addits', of a pointed Arveson system,
 and demonstrate their usefulness through several applications.
 By a pointed Arveson system
 we mean a spatial Arveson system with a fixed normalised reference unit.
 We show that the
 addits form a Hilbert space whose codimension-one subspace of `roots' is
 isomorphic to the index space of the Arveson system,
 and that
 the addits generate the type I part of the Arveson system.
 Consequently
 the isomorphism class of the Hilbert space of addits is
 independent of the reference unit.
 The addits of a pointed inclusion system are shown to be
 in natural correspondence with the addits of the generated pointed product system.
 The theory of amalgamated products
 is developed using addits and roots,
 and an
 explicit formula for
 the amalgamation of pointed Arveson systems
 is given,
 providing a new proof
 of its independence of the particular reference units.
(This independence
justifies the terminology
 `spatial product' of spatial Arveson systems).
  Finally
 a cluster construction for
 inclusion subsystems of an Arveson system is introduced
 and
 we demonstrate
 its correspondence with the action of the Cantor--Bendixson derivative in the context of
 the random closed set approach to product systems
 due to Tsirelson and Liebscher.
	\end{abstract}
	
	\maketitle

\vspace*{-11cm}
\begin{flushright}
\ti{\small To appear in:
\\
Transactions of the American Mathematical Society}
\end{flushright}
\vspace{11cm}

%%%%%%%%%%%%%%%%%%%%%%%%%%%%%%%%%%%      TABLE OF CONTENTS
 \tableofcontents

%%%%%%%%%%%%%%%%%%%%%%%%%%%%%%%%%%%%%%%%%%%%% New section

\section*{Introduction}
\label{section: introduction}

A basic goal of the study of quantum dynamics is the classification of
$E_0$-semigroups, that is suitably continuous
one-parameter semigroups of unital *-endomorphisms of $B(\Hil)$,
the algebra of bounded operators on a separable Hilbert space $\Hil$
(\cite{Arv-noncommutative}).
Each $E_0$-semigroup is associated to
an Arveson system, that is a suitably measurable
one-parameter family of separable Hilbert spaces $\E = (\E_t)_{t>0}$
enjoying associative identifications $\E_{s+t} \simeq \E_s\otimes \E_t$
via unitary operators,
and conversely,
to each such Arveson system there is an associated $E_0$-semigroup.
If cocycle conjugate $E_0$-semigroups are identified,
and isomorphic Arveson systems are too,
then these associations are rendered mutually inverse
(\cite{Arv-continuous},\cite{Arv-converse};
see also~\cite{Lie-random}, and~\cite{Skeide-simple}).

A unit of an Arveson system is a nonzero measurable section $(u_s)_{s>0}$,
which has the continuous factorisation property:
$u_{s+t}=u_s\otimes u_t$,
and Arveson systems are classified
into type I, type II and (nonspatial or) type III,
according to whether their set of units respectively,
generates the system, is nonempty but fails to generate the system, or is empty.
Spatial Arveson systems have an associated index space;
this is a separable Hilbert space constructed from the set of units
whose dimension is called the index of the system.
The index is an isomorphism invariant,
and is additive under the tensor product operation on Arveson systems.

For type I Arveson systems the index is a complete invariant (\cite{Arv-continuous})
and,
for each separable Hilbert space $\kil$
there is a paradigm type I system with index equal to $\dim \kil$,
namely the Fock Arveson system $\Fockk$ (\cite{Arv-continuous});
this is
described in the appendix.
The isomorphism classes of type II and type III systems are both
known to be uncountable
(\cite{Pow-nonspatial},\cite{Pow-new-example},\cite{Tsi-non-isomorphic},\cite{Tsi-coloured}).
There is currently a lack of good invariants to distinguish these, and
their classification is far from complete.
Tsirelson has shown
measure types of random sets, and generalised Gaussian processes,
to be fertile sources of type II systems
(\cite{Tsi-three-questions},\cite{Tsi-Warren's-noise});
Liebscher has made a systematic study of Tsirelson's examples.
To every product subsystem of an Arveson system $\E$
there corresponds a commuting family of orthogonal projections
satisfying evolution and adaptedness relations,
and the von Neumann algebra generated by them uniquely
determines a (probability) measure type
of random closed subsets of the unit interval.
The measure types are stationary and factorising over disjoint intervals, and
provide an isomorphism invariant for the Arveson system (\cite{Lie-random}).

Completely positive contraction semigroups on operator *-algebras are called
quantum dynamical semigroups.
For a separable Hilbert space $\Hil$,
every unital quantum dynamical semigroup on $B(\Hil)$
dilates to an $E_0$-semigroup,
and the minimal dilation is unique up to cocycle conjugacy;
this provides an approach to
the understanding of quantum dynamics (\cite{Bha-minimal}).
For $E_0$-semigroups on
$C^*$- and $W^*$-algebras,
one may associate product systems of Hilbert modules
(\cite{MuS-Markov},\cite{ShS-subproduct},\cite{Ske-classification}).
Much of the theory of Arveson systems and $E_0$-semigroups on $B(\Hil)$
carries over to
product systems of Hilbert modules and $E_0$-semigroups on $B^{\adjointable}(E)$,
the algebra of adjointable operators on a Hilbert module $E$.
However there is no tensor product operation for
product systems of Hilbert modules.
For pointed product systems of Hilbert modules,
that is systems with a fixed normalised reference unit,
Skeide overcame this by introducing a notion of spatial product
(\cite{Ske-index}).
In the spatial product,
units are identified and the index is again additive.

By a pointed Arveson system we mean
a spatial Arveson system together with a fixed normalised reference unit.
For pointed Arveson systems $(\E,u)$ and $(\F,v)$,
Skeide's spatial product may be identified with
$\E \ot v \bigvee u \ot\F$,
the product subsystem of the tensor product Arveson system $\E \ot \F$
generated by $\E \ot v$ and $u \ot\F$.
This raises the natural question, is this necessarily all of $\E \ot \F$?
Powers answered this in the
negative, by solving the corresponding equivalent problem
for $E_0$-semigroups using his `sum construction'
(\cite{Pow-addition});
see also~\cite{Ske-commutant},~\cite{BLS-subsystems} and~\cite{Ske-CPD}).
Motivated by this question,
the amalgamated product via a contractive morphism of Arveson systems
(which are not necessarily spatial)
was introduced in~\cite{BhM-inclusion}
(see Section~\ref{section: amalgamated products}).
This generalises the spatial product of pointed Arveson systems
since the latter may be viewed as the amalgamated product via
the morphism defined through Dirac dyads of the normalized units.
(It also answers Powers' problem for
the Powers sum arising from not-necessarily-isometric intertwining
semigroups.)
A priori the spatial product may depend on the reference units.
Since,
as Tsirelson has shown,
the automorphism group of an Arveson system may not act transitively
on its set of units
(\cite{Tsi-automorphism})
the answer to this dependency question is not obvious.
It was settled in the negative in
\cite{BLMS-intrinsic}, see also \cite{Lie-spatial}.
Our work yields another proof of this fact.

In this paper we introduce and systematically exploit
the notions of addit and root,
for pointed Arveson systems.
We also introduce a cluster construction
for product subsystems of an Arveson system;
on the one hand the construction provides
a new way of obtaining the type I part of a spatial Arveson system
(Theorem~\ref{thm: 5.5}),
on the other hand it reflects the extraction of the derived sets
of random closed subsets of the unit interval in Liebscher's correspondence
(Theorem~\ref{thm: 5.2}).
Whereas Liebscher's work  heavily relies on
  direct integral constructions
  and
  the measure theory of random sets,
 by contrast
 our cluster construction
 (Definition~\ref{defn: 6.2} \emph{et seq}) is done explicitly by
  elementary Hilbert space means,
 via an inclusion subsystem.

The structure of the paper is as follows.
In
Sections~\ref{section: product systems}
and~\ref{section: inclusion systems},
we give a brief overview of the basic theory of
product systems, Arveson systems and inclusion systems,
and set out the notations and terminology used in the paper.
This includes an important implication of Liebscher's work (Theorem~\ref{thm: always}).
An appendix describes the paradigm case of Fock Arveson systems
$\Fockk$, for a separable Hilbert space $\kil$,
and introduces the `Guichardet picture' for these.
 In Section~\ref{section: addits Arveson}
 addits and roots are defined.
These are additive counterparts to units;
 roots are addits which are orthogonal to the reference unit.
 Addits comprise a Hilbert space with roots occupying a codimension-one subspace.
 The roots of the pointed Arveson systems $(\Fockk, \Vack)$, in which $\Vack$ is the vacuum unit,
 are shown to be indexed by the elements of $\kil$ itself,
 via an isometric isomorphism (Proposition~\ref{propn: roots of Fock}).
 From this we show that,
 for any normalised unit,
 the type I part of a spatial Arveson system is generated by the unit together with its roots,
 and the dimension of the Hilbert space of roots equals the index of the Arveson system
 (Theorem~\ref{thm: 3.7}).
 Thus the isomorphism class of the Hilbert space of addits of a pointed Arveson system $(\E,u)$
 is independent of
 the choice of unit $u$ of the spatial Arveson system $\E$.
 In Section~\ref{section: addits inclusion},
 we extend the notions of addit and root to pointed inclusion systems $(E,u)$,
 and establish a
 natural bijection
 between the addits of such a system and the addits of $(\E, \wh{u})$
where $\E$ is the generated (algebraic) product system and $\wh{u}$ is the normalised unit
obtained from $u$ by `lifting' (Proposition~\ref{propn: 3.12}).
  The behaviour of roots under
  amalgamated products of  both spatial and pointed Arveson systems
  is studied
   in Section~\ref{section: amalgamated products}.
   In that section
   we give an explicit formula for the amalgamated product of pointed Arveson systems
   (Theorem~\ref{thm: 4.2}) which
 provides another proof of its independence of the reference units,
 and thus also
 of the fact that,
   up to cocycle conjugacy,
    the Powers sum of $E_0$-semigroups is independent of the choice of intertwining isometries.
In
Section~\ref{section: cluster construction}
we describe our cluster construction for
subsystems $\F$ of an Arveson system $\E$;
we also summarise the relevant theory of hyperspaces.
When $\E$ is spatial,
and $\F$ is generated by a normalised unit,
the cluster
is shown to be the type I part of $\E$ (Theorem~\ref{thm: 5.5}).
Finally,
extending part of Proposition 3.33 of~\cite{Lie-random},
  we show that
the measure type corresponding to a subsystem
and the measure type of its cluster are
precisely related
via the Cantor--Bendixson derivative (Theorem~\ref{thm: 5.2}).

\emph{Some notational conventions}.
For Hilbert space vectors $u \in \Hil$ and $x \in \Kil$,
$\dyad{x}{u}$ denotes the bounded operator $\Hil \to \Kil$, $v \mapsto \ip{u}{v} x$
(inner products being linear in their second argument).
For a subset $A$ of the domain of a vector-valued function $g$,
$g_A$ denotes the function which equals $g$ except that it takes the value $0$ outside $A$
(\emph{cf}. indicator function notation).
We use $\mathcal{P}$ to denote power set,
and $\subset \subset$ for subset of finite cardinality.

%%%%%%%%%%%%%%%%%%%%%%%%%%%%%%%%%%%%%%%%%%%%% New section

\section{Product systems}
%\label{Product-inclusion}
\label{section: product systems}

In this section we
briefly recall the basic concepts of continuous product systems of Hilbert spaces,
and thereby establish our basic notations.
Key references are Arveson's monograph (\cite{Arv-noncommutative}) and Liebscher's memoir (\cite{Lie-random}).

\begin{definition}
An (\emph{algebraic}) \emph{product system} $\E$ consists of
a family of Hilbert spaces $(\Et)_{t>0}$
with associated unitary \emph{structure maps}
\[
\BEst: \Esplust \to \Es \ot \Et
\qquad(s,t > 0),
\]
satisfying the natural consistency conditions
\[
( \IEr \ot \BEst ) \BErsplust =
( \BErs \ot \IEt ) \BErplusst
\qquad(r,s,t > 0)
\]
where
$\IEs := I_{\Es}$ ($s>0$).
It is called an \emph{Arveson system} if
each fibre $\Et$ is separable and the system is endowed with measurable structure:
the families $(\Et)_{t>0}$ and
$( \BEst )_{s,t>0}$ are both `measurable'.
\end{definition}

\begin{remarks}
(i)
In the literature, the structure maps are usually taken to be the adjoints
$\WEst = ( \BEst )^* : \Es \ot \Et \to \Esplust$.
Here we use the equivalent $B$'s instead
in order to maintain consistency with inclusion systems (defined below).

(ii)
For the precise meaning of measurability meant here, we refer to~\cite{Arv-noncommutative}
and
the essentially equivalent formulation given in~\cite{Lie-random}.

(iii)
Frequently one supresses the structure maps and identifies
$\Esplust$ and $\Es \ot \Et$,
or writes $x \bfcdot y$ for the preimage in $\Esplust$ of $x \ot y$
when $x \in \Es$ and $y \in \Et$.

(iv)
If $\dim \Et = 1$ for each $t>0$
then a choice of unit vector $u_t \in \Et$ for each $t>0$
reduces the consistency condition to the multiplier relation
\[
m(s,t) m(r, s+t) = m(r,s) m(r+s,t)
\]
for the map $m: \Real_{>0} \times \Real_{>0} \to \mathbb{T}$ given by
$m(s,t) u_s \ot u_t = \BEst u_{s+t}$.
\end{remarks}

\begin{definition}
\label{defn: 1.2}
Let $\E$ be a product system and let $T > 0$.
The family of unitary operators
$\UET = ( \UET_t )_{t \in \Real}$
on $\ET$
defined by periodic extension of the prescription
\[
\UETt =
\left\{
 \begin{array}{ll}
 \IET
 & \text{ if }
 t=0
 \\
 ( \BEtTminust )^* \Pi^T_t \BETminustt
 & \text{ if }
 0 < t < T
 \end{array}, \right.
\]
in which
$\Pi^T_t$ denotes the tensor flip $\ETminust \ot \Et \to \Et \ot \ETminust$,
is
called the \emph{unitary flip group on} $\ET$.
\end{definition}
It is easily verified  that
$\UET = ( \UET_t )_{t \in \Real}$
forms a one-parameter group.

\begin{theorem} [\cite{Lie-random}, Theorem 7.7]
\label{thm: Lieb 7.7}
Let $\E$ be a product system and let $\tau > 0$.
Then the following are equivalent\tu{:}
\begin{rlist}
\item
$\E$ is an Arveson system with respect to some measurable structure.
\item
For all $t>0$,
$\E_t$ is separable, and
for all $T \in \, ]0,\tau[$\,,
$\UET$ is strongly continuous.
\end{rlist}
\end{theorem}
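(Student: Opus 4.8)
The plan is to treat the two implications separately, with $(i) \Rightarrow (ii)$ a soft consequence of automatic continuity and $(ii) \Rightarrow (i)$ requiring one to manufacture a measurable structure out of the continuity hypothesis.

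For $(i) \Rightarrow (ii)$, suppose $\E$ carries a measurable structure. Separability of each fibre is part of the definition, so only the strong continuity of $\UET$, for $T \in \,]0,\tau[$, is at issue. First I would observe that $t \mapsto \UETt$ is Borel measurable as a map from $\Real$ into the unitary group of $\ET$ with its strong (equivalently, since $\ET$ is separable, weak) Borel structure: the constituents of the defining formula, namely the structure maps $\BEtTminust$ and $\BETminustt$ and the tensor flips $\Pi^T_t$, each depend measurably on $t$, precisely because the measurability axioms for an Arveson system say that $(\BEst)_{s,t>0}$ is a measurable field of operators, and a finite tensor product of measurable fields is again a measurable field on which the flips act measurably. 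Since, as noted after Definition~\ref{defn: 1.2}, $\UET$ is a one-parameter group, I would then invoke the classical theorem (von Neumann) that a Borel one-parameter unitary group on a separable Hilbert space is automatically strongly continuous, which yields $(ii)$.

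For $(ii) \Rightarrow (i)$, a measurable structure has to be built. I would first reduce to a bounded interval: it suffices to equip $(\E_t)_{0<t<\tau}$ with a measurable structure for which the $\BEst$ with $s,t,s+t \in \,]0,\tau[$ are measurable, since the factorisations $\E_t \simeq \E_{t/n}^{\ot n}$ (valid once $t/n < \tau$), together with the fact that a finite tensor power of a measurable field is a measurable field, propagate the structure consistently to all of $\,]0,\infty[\,$. To build the structure on $\,]0,\tau[\,$ I would fix $T \in \,]0,\tau[\,$, regard $\ET$ as a fixed separable Hilbert space, and use the strongly continuous flip group $\UET$ to realise, for $t$ in a compact subinterval $[a,b]\subset\,]0,T[\,$, the fibres $\E_t$ as a \emph{continuous} field of tensor factors of $\ET$ via the decompositions $\ET \simeq \E_t \ot \ETminust$: the group $\UET$ interpolates continuously between the two orderings of such a decomposition, and this is what one leverages to transport a fixed reference structure in $\ET$ to a $t$-continuous one in the fibres. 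A continuous field of Hilbert spaces over a locally compact second-countable base is in particular a measurable field; one then checks that the structure maps, being assembled from the same flip-group data, are measurable for it, and finally glues the finitely many overlapping intervals $[a,b]$ covering $\,]0,\tau[\,$, verifying compatibility.

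The main obstacle is the construction in the last paragraph: converting the bare strong continuity of $t \mapsto \UETt$ into an honest continuous (or even merely measurable) local trivialisation of the bundle $(\E_t)$. The trouble is that there is no a priori continuous section of unit vectors splitting the factorisations $\ET \simeq \E_t \ot \ETminust$, so the identification of $\E_t$ with a moving subspace of $\ET$ must be extracted from the flip group itself — for instance by a measurable-selection argument applied to the reduced operators obtained from a fixed vector of $\ET$ — after which one must confirm that the resulting field is genuinely measurable and that the consistency relations among the $\BEst$ transfer. This is the technical heart of Liebscher's argument, and to carry it through I would follow his direct-integral bookkeeping.
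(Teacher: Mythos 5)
You should first note that the paper contains no proof of this statement: it is quoted, with attribution, from Liebscher's memoir ([Lie-random], Theorem 7.7), so there is nothing in the text to compare your argument against and your proposal has to stand on its own. Your sketch of (i) $\Rightarrow$ (ii) is sound in outline: the measurability axioms for an Arveson system give weak, hence Borel, measurability of $t \mapsto \UETt$ on the separable space $\ET$, and von Neumann's automatic-continuity theorem for measurable one-parameter unitary groups then yields strong continuity.

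The implication (ii) $\Rightarrow$ (i), however, is not established by what you write. The entire content of that direction is the passage from bare strong continuity of the flip groups to a measurable structure on the field $(\Et)_{t>0}$ compatible with all the structure maps, and that is precisely the step you leave open. You assert that $\UET$ ``interpolates continuously between the two orderings'' of $\ET \simeq \Et \ot \ETminust$ and that this lets one transport a fixed reference structure to the fibres, but there is no canonical embedding of $\Et$ into $\ET$: any such identification requires a choice of vectors in $\ETminust$, made measurably in $t$, and in the type III case there are no units available, so the required selection must itself be extracted from the flip group --- you name this as ``the technical heart'' and then defer to Liebscher's direct-integral bookkeeping, i.e.\ to the very source whose theorem is to be proved. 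In addition, two of your reductions are themselves nontrivial and are asserted rather than argued: that the field so obtained is compatible with the maps $\BEst$ for \emph{all} admissible $s,t$ (not merely those entering the construction), and that the structure propagates consistently from $]0,\tau[$ to all of $]0,\infty[$ via $\Et \simeq \E_{t/n}^{\ot n}$ together with gluing over overlapping intervals; these consistency checks are part of what the omitted construction must deliver. As it stands, the hard direction is a plan rather than a proof.
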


Let $\E$ be a product system and suppose that, for each $t>0$,
$\Ft$ is a closed subspace of $\Et$ and that, for each $s,t>0$,
$\BEst (\Fsplust ) = \Fs \ot \Ft$.
Then $\F = ( \Ft )_{t>0}$ is a product system with structure maps
$\BFst : \Fsplust \to \Fs \ot \Ft$ ($s,t>0$)
given by compression of the structure maps of $\E$.
Such systems are called \emph{product subsystems of} $\E$.

The following automatic measurability result is a
significant consequence of Liebscher's approach to product systems.
Note that in his approach the parameter set of an Arveson system $\E$
is extended to $\Rplus$, with $\E_0 = \Comp$.

\begin{theorem}
\label{thm: always}
Let $\F$ be a product subsystem of an Arveson system $\E$.
Then $\F$ is an Arveson subsystem, in other words
the measurable structure of $\E$ induces measurable structure on $\F$.
\end{theorem}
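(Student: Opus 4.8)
The plan is to reduce everything to Liebscher's criterion, Theorem~\ref{thm: Lieb 7.7}, applied to the product system $\F$. Fix $\tau>0$ as in that theorem. Two things need to be checked: that each fibre $\Ft$ is separable, and that for every $T\in\,]0,\tau[$ the unitary flip group $\UFT$ on $\FT$ is strongly continuous. The first is immediate, since by hypothesis $\Ft$ is a closed subspace of the separable Hilbert space $\Et$.

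For the flip groups, the key point is that $\UFT$ is simply the restriction of $\UET$ to $\FT$. Indeed, since $\BEst(\Fsplust)=\Fs\ot\Ft$ for all $s,t>0$, the structure maps of $\F$ are the compressions to the fibres of $\F$ of those of $\E$, and the tensor flip $\Pi^T_t\colon\ETminust\ot\Et\to\Et\ot\ETminust$ carries $\FTminust\ot\Ft$ onto $\Ft\ot\FTminust$. Tracking these through the formula $\UETt=(\BEtTminust)^*\,\Pi^T_t\,\BETminustt$ of Definition~\ref{defn: 1.2}, one sees that for $0<t<T$ the subspace $\FT$ is mapped by $\BETminustt$ onto $\FTminust\ot\Ft$, then by $\Pi^T_t$ onto $\Ft\ot\FTminust$, and then by $(\BEtTminust)^*$ back onto $\FT$; hence $\FT$ is $\UET$-invariant and $\UETt|_{\FT}=\UFTt$, the remaining values of $t$ being handled by the case $t=0$ together with periodic extension. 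Since $\E$ is an Arveson system, Theorem~\ref{thm: Lieb 7.7} guarantees that $\UET$ is strongly continuous for $T\in\,]0,\tau[$, and the restriction of a strongly continuous one-parameter group to a closed invariant subspace is again strongly continuous. Therefore $\UFT$ is strongly continuous for every such $T$, and Theorem~\ref{thm: Lieb 7.7}, read in the opposite direction and now applied to $\F$, produces a measurable structure on $\F$ making it an Arveson system.

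It remains to note that the structure so obtained is the one \emph{induced} by $\E$ — that is, $\F$ is an Arveson \emph{subsystem}, with the inclusions $\Ft\hookrightarrow\Et$ measurable — rather than merely an abstractly isomorphic Arveson system. This holds because a measurable structure on an Arveson system is essentially unique, so it may be realised on $\F$ using measurable sections obtained by restricting $\F$-valued measurable sections of $\E$; equivalently it can be read off the naturality in the flip groups of the construction underlying Theorem~\ref{thm: Lieb 7.7}. I expect this last, bookkeeping, step — making the word ``induced'' precise and matching it against Liebscher's construction — to be the only genuine subtlety; by contrast, the existence of \emph{some} measurable Arveson structure on $\F$ falls out immediately from the flip-group observation above.
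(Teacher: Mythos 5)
Your flip-group observation is correct and is genuinely different from the paper's argument: since $\BEst(\Fsplust)=\Fs\ot\Ft$, the subspace $\FT$ is invariant under $\UET$ and $\UETt|_{\FT}=\UFTt$, so Theorem~\ref{thm: Lieb 7.7} does yield that $\F$ carries \emph{some} measurable structure making it an Arveson system. But that is strictly weaker than the theorem, and the step you set aside as ``bookkeeping'' is in fact its entire content: one must show that the measurable structure on $\F$ is the one \emph{induced} by $\E$, i.e.\ that there exist fibrewise total families of measurable sections of $\E$ taking values in $\F$ (equivalently, that $t\mapsto P_{\F_t}$ interacts measurably with a determining family for $\E$), and that the compressed structure maps $V^*_{s+t}\,W^{\E}_{s,t}(V_s\ot V_t)$ form a measurable family. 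Your appeal to essential uniqueness (Theorem~\ref{thm: 7.16}) cannot supply this: that theorem compares two measurable structures already given on the same algebraic product system, whereas here nothing $\E$-compatible has yet been exhibited on $\F$ to compare with the structure produced abstractly by Theorem~\ref{thm: Lieb 7.7}. Likewise, ``restricting $\F$-valued measurable sections of $\E$'' presupposes that enough such sections exist, which is precisely the nontrivial point, not a formality; and the inclusion $\F\to\E$ is a non-unitary morphism, so automatic measurability in the form quoted in the paper does not apply to it.

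The paper's proof addresses exactly this compatibility head-on: taking orthonormal determining sections $(e^n)$ of $\E$, it shows that $t\mapsto\ip{e^l_t}{P_t e^m_t}$ is measurable by Parseval's identity together with the strong continuity of the adapted projection family $P^{\F}_{s,t}$ of~\eqref{eqn: PF} (part (a) of Theorem~\ref{thm: Lie random}), which gives measurability of the projected sections $P_te^n_t$, and then deduces measurability of the compressed structure maps by a second Parseval computation. To complete your route you would still need an argument of this kind — some form of measurability or continuity of $t\mapsto P_{\F_t}$ — so the flip-group shortcut only re-proves the easy half (existence of a measurable structure on $\F$), not the assertion actually needed later in the paper, namely that $\F$ sits inside $\E$ as a measurable subsystem.
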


\begin{proof}
Let $( e^n )_{ n\ges 1}$ be a family of sections of $\E$ determining
its measurable structure. We must show that
\begin{alist}
\item
the sections
$\big(  P_t e^n_t \big)_{ n\ges 1}$ are measurable, and
\item
the family of operators
$\big(  W^{\F}_{s,t} :=
V^*_{s+t} W^{\E}_{s,t} ( V_s \ot V_t ) \big)_{ s,t \ges 0}$ is measurable,
\end{alist}
for the inclusion operators $V_t: \F_t \to \E_t$
and orthogonal projections $P_t := V_t V_t^* = P_{\F_t}$.
Without loss of generality we may suppose that
$(  e^n_t )_{ n\ges 1}$ is an orthonormal basis of $\E_t$ for all $t > 0$,
moreover it suffices to prove
(a) and (b) for
these for $s$  and $t$ ranging over $]0,1[$ (see~\cite{Lie-random}).

Let $\big( P^{\F}_{s,t} \big)_{0 \les s,t \les 1}$ be
the strongly continuous family of orthogonal projections in $B( \E_1 )$
defined in~\eqref{eqn: PF} below, and set $e := e^1$.
By Parseval's identity,
the measurability of $t \mapsto e^p_t \cdot e^q_{1-t}$ ($p,q \in \Nat$),
 and the strong continuity of $t \mapsto P^{\F}_{0,t}$
it follows that, for all $l, m \ges 1$ and $t \in [0,1]$,
\begin{align*}
\ip{ e^l_t }{ P_t e^m_t }
&
=
\ip{ e^l_t \cdot e_{1-t}}{ P^{\F}_{0,t} ( e^m_t \cdot e_{1-t} ) }
\\
&
=
\sum_{p,q \ges 1}
\ip{ e^l_t \cdot e_{1-t} }{ e^p_{1} } \,
\ip{ P^{\F}_{0,t} e^p_1 }{ e^q_{1} } \,
\ip{ e^q_1 }{ e^m_t \cdot e_{1-t} }
\end{align*}
which is now manifestly measurable in $t$. This proves (a).
By another application of Parseval's identity, we see that
\begin{align*}
\ip{ V^*_{s+t} e^l_{s+t} }{ W^{\F}_{s,t} ( V^*_s e^m_s \ot V^*_t e^n_t ) }
&
=
\ip{ e^l_{s+t} }{ W^{\E}_{s,t} ( P_s e^m_s \ot P_t e^n_t ) }
\\
&
=
\sum_{p,q \ges 1}
\ip{ e^l_{s+t} }{ W^{\E}_{s,t} ( e^p_s \ot e^q_t ) } \,
\ip{ e^p_{s} }{ P_s e^m_s } \,
\ip{ e^q_{t} }{ P_t e^n_t }
\end{align*}
for
$l, m, n \ges 1$ and $s, t \in [0,1]$,
so (b) follows from (a).
\end{proof}

Given two product subsystems $\Eone$ and $\Etwo$ of a product system $\E$,
the smallest product system of $\E$ containing $\Eone$ and $\Etwo$ is denoted
$\Eone \bigvee \Etwo$.
Thus, by Theorem~\ref{thm: always}, if $\E$ is an Arveson system then
$\Eone \bigvee \Etwo$ is an Arveson subsystem of $\E$.

\begin{definition}
Let $\E$ and $\F$ be product systems.
A family of bounded operators $\phi = ( \phi_t: \E_t \to \F_t )_{t>0}$ is a
\emph{morphism of product systems} if it satisfies
\[
\BFst \, \phi_{s+t} = ( \phi_s \ot \phi_t ) \BEst
\qquad
(s,t > 0)
\]
and the quasicontractivity condition
$
e^{-kt} \norm{\phi_t} \les 1
$
($t>0$), for some $k \in \Real$;
it is an \emph{isomorphism} if each $\phi_t$ is unitary.
A \emph{morphism of Arveson systems} is
a morphism of the underlying product systems
which consists of a measurable family of operators.
\end{definition}

\begin{theorem}
[\cite{Lie-random}, Corollary 7.16]
\label{thm: 7.16}
Let $\phi: \E \to \F$ be an isomorphism of product systems.
Suppose that $\E$ and $\F$ are Arveson systems.
Then $\phi$
and $\phi^{-1}$ are measurable, and thus
$\phi$ is an isomorphism of Arveson systems.
\end{theorem}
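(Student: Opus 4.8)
The plan is to manufacture a second measurable structure on $\F$ by transporting that of $\E$ across $\phi$, so that $\phi$ becomes measurable by fiat, and then to identify this transported structure with the given one via the essential uniqueness of the measurable structure of an Arveson system. For the transport: since $\phi$ is a morphism of the underlying product systems, $\BFst\,\phi_{s+t}=(\phi_s\ot\phi_t)\BEst$, and taking adjoints (each $\phi_t$ being unitary) gives $W^{\F}_{s,t}=\phi_{s+t}\,W^{\E}_{s,t}\,(\phi_s^{-1}\ot\phi_t^{-1})$. Declare a section $g=(g_t)_{t>0}$ of $\F$ measurable for the candidate structure $\mathcal{M}'$ precisely when $(\phi_t^{-1}g_t)_{t>0}$ is a measurable section of $\E$. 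Taking a sequence $(e^n)_{n\ges1}$ of measurable sections of $\E$ total in each fibre, the sections $(\phi_t e^n_t)_{t>0}$, $n\ges1$, are $\mathcal{M}'$-measurable and fibrewise total, and the displayed identity for $W^{\F}$ shows the structure maps of $\F$ to be $\mathcal{M}'$-measurable. Hence $\mathcal{M}'$ is a measurable structure making $\F$ an Arveson system, and by the very definition of $\mathcal{M}'$ both $\phi\colon(\E,\mathcal{M}_{\E})\to(\F,\mathcal{M}')$ and its inverse are isomorphisms of Arveson systems.

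It remains to show that $\mathcal{M}'$ agrees, as a measurable structure, with the given structure $\mathcal{M}_{\F}$ on $\F$; equivalently, that an Arveson system carries essentially only one measurable structure. This is where Theorem~\ref{thm: Lieb 7.7} does the real work: the unitary flip group $\UFT$ is built purely from the algebraic product system (Definition~\ref{defn: 1.2}), and the construction underlying the implication from (ii) to (i) in Theorem~\ref{thm: Lieb 7.7} recovers a measurable structure canonically from the fibres together with the strongly continuous groups $(\UFT)_{T>0}$; moreover the measurable structure of any Arveson system is equivalent to the one so reconstructed from its flip groups. Applying this to both $\mathcal{M}_{\F}$ and $\mathcal{M}'$ --- which both make $\F$ an Arveson system with the same flip groups --- shows they are equivalent, whence $\phi$ and $\phi^{-1}$ are measurable for the original structures, as claimed. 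As a structural check, one verifies directly from the morphism identity, together with the fact that tensor flips intertwine tensor-swapped operators, that $\phi_T$ intertwines the flip groups, $\phi_T\,\UETt=\UFTt\,\phi_T$ for every $T>0$ and $t\in\Real$; this is why one should expect $\phi$ to respect whatever data the flip groups determine.

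The hard part is exactly the uniqueness of the measurable structure --- equivalently, the canonicity of the reconstruction in Theorem~\ref{thm: Lieb 7.7} --- which genuinely rests on Liebscher's analysis and is not formal. In particular the ``evaluate at time $1$'' device used to prove Theorem~\ref{thm: always} does not transfer here: comparing $\mathcal{M}_{\F}$ with $\mathcal{M}'$ forces one to handle $\F_1$-valued sections $t\mapsto g_t\bfcdot h_{1-t}$ in which $g$ is measurable for one of the two structures and $h$ for the other, and no choice of the filler $h$ avoids mixing them, so the intrinsic description of the measurable structure via the flip groups is really needed.
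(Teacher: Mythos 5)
There is no in-paper proof to compare against: the paper quotes this statement from Liebscher's memoir (Corollary 7.16 of [$\text{Li}_1$]) without proof. Your outline does follow the spirit of Liebscher's route --- the measurable structure is intrinsic, being determined by the flip groups --- and the parts you actually verify are fine: the transport of the measurable structure of $\E$ to a structure $\mathcal{M}'$ on $\F$ making $\phi$ bi-measurable by construction, and the intertwining $\phi_T\,\UETt=\UFTt\,\phi_T$, which follows from the morphism identity together with the fact that the tensor flip $\Pi^T_t$ carries $\phi_{T-t}\ot\phi_t$ to $\phi_t\ot\phi_{T-t}$.

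The genuine gap is exactly the step you flag and then do not supply: the claim that any two measurable structures making the same algebraic product system an Arveson system are equivalent, equivalently that the structure reconstructed from the strongly continuous flip groups in the proof of Theorem~\ref{thm: Lieb 7.7} is canonical and agrees (up to equivalence) with any given one. This is not contained in the statement of Theorem~\ref{thm: Lieb 7.7}, which as quoted is purely an existence criterion, and no argument for it appears in your proposal. Moreover, this uniqueness assertion is not merely ``the hard part'': it is equivalent to the theorem being proved. Indeed, applying Theorem~\ref{thm: 7.16} to the identity morphism between $(\F,\mathcal{M}_\F)$ and $(\F,\mathcal{M}')$ yields the uniqueness statement, while your transport argument derives the theorem from uniqueness; so the reduction is circular unless uniqueness is independently established, which is precisely Liebscher's analysis (constructing, from the strong continuity of $\UFT$, sections that are measurable for \emph{every} compatible structure, and showing they determine the structure). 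Your closing remark that the ``evaluate at time $1$'' device of Theorem~\ref{thm: always} cannot be used here correctly explains why the problem is not formal, but it does not close the gap: as written, the proposal is a correct reformulation of the theorem plus an appeal to the very result being cited, not a proof.
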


\begin{definition}
Let $\E$ be an Arveson system.
A \emph{unit} of $\E$ is a nonzero measurable section of $\E$ satisfying
\[
u_{s+t} = u_s \bfcdot u_t
\qquad(s,t > 0)\tu{;}
\]	
it is \emph{normalised} if it satisfies $\norm{u_t} = 1$ ($t>0$).
The collection of units of $\E$, respectively normalised units of $\E$,
is denoted $\UnitE$, respec. $\UnitEnorm$,
and $\E$ is called \emph{spatial} if $\UnitE \neq \emptyset$.

The \emph{type I part of} $\E$, denoted $\EI$,
is the smallest product subsystem of $\E$ containing all the units of $\E$,
and $\E$ is said to be \emph{of type I} if $\EI$ is $\E$ itself.
Thus, for a spatial Arveson system $\E$,
\[
\EIT =
\Linbar
\big\{
u^1_{t_1} \bfcdot \, \cdots \, \bfcdot \, u^n_{t_n}:
n \in \Nat, u^1,  \cdots , u^n \in \UnitE, {\bf t} \in J^{(n)}_T
\big\}
\qquad(T>0)
\]
where $J^{(n)}_T  := \{ {\bf t} \in ( \Real_{>0} )^n: \sum t_i = T \}$.
\end{definition}

Let $\E$ be a spatial Arveson system.
For each $u,v \in \UnitE$,
the function $t \mapsto \ip{u_t}{v_t}$ is measurable and
satisfies Cauchy's multiplicative functional equation $f(s+t) = f(s) f(t)$,
and so there is $\gamma(u,v) \in \Comp$ such that
$\ip{u_t}{v_t} = e^{t \gamma(u,v)}$ ($t>0$).
The resulting map $\gamma: \UnitE \times \UnitE \to \Comp$ is called the \emph{covariance function} of $\E$.
It is conditionally positive definite:
$\sum \ol{\lambda_i} \lambda_j \gamma(u^i, u^j) \ges 0$
for $n \in \Nat$,
$u^1, \cdots , u^n \in \UnitE$ and
$\lambda_1, \cdots , \lambda_n \in \Comp$ satisfying $\sum \lambda_i = 0$.
It follows that the prescription
\[
\ip{f}{g} := \sum_{u,v \in \UnitE} \gamma(u,v) \ol{f(u)} f(v)
\]
defines a nonnegative sesquilinear form on the vector space
\[
V :=
\Big\{
f: \UnitE \to \Comp \, \Big| \ \supp f \subset\subset \UnitE, \sum\nolimits_{u \in \UnitE} f(u) = 0
\Big\}.
\]
Quotienting out by the null space $\{ f \in V: \ip{f}{f} = 0 \}$
and completing yields a Hilbert space $\kil(\E)$, called the \emph{index space of} $\E$;
its dimension, denoted $\ind \E$, is called the \emph{index of} $\E$.

The index is an isomorphism invariant for Arveson systems:
if $\Eone \cong \Etwo$ then $\kil(\Eone) \cong \kil(\Etwo)$.

\begin{example}
Our notations for Fock Arveson systems are given in the appendix.
The covariance function of
the Fock Arveson system
$\Fockk$ is given by
\[
\gamma \big( (e^{\lambda t} \ve^c_t )_{t>0}, (e^{\mu t} \ve^d_t )_{t>0} \big)
=
\ol{\lambda} + \mu + \ip{c}{d}
\qquad
( c, d \in \kil, \, \lambda, \mu \in \Comp).
\]
These Arveson systems are of type I and satisfy
\[
\kil( \Fockk ) \cong \kil.
\]
Thus $\Fockkone \cong \Fockktwo$ implies $\kil_1 \cong \kil_2$.
Conversely,
$\E^I \cong \Fock^{\kil(\E)}$
for any Arveson system $\E$.
\end{example}

The following notion
plays a central role in this paper, from Section~\ref{section: addits Arveson} onwards.

\begin{definition}
\label{defn: pointed}
A \emph{pointed Arveson system}
is an ordered pair $(\E, u)$
consisting of a spatial Arveson system $\E$ and a fixed normalised unit $u$,
which we refer to as the \emph{reference unit}.
\end{definition}

\begin{remarks}
Our terminology is a refinement of Liebscher's (in~\cite{Lie-spatial});
his is in conflict with the now-common use of the term spatial Arveson system
(as defined above).

There is an obvious notion of \emph{isomorphism} for
pointed Arveson systems.

By means of Fock--Weyl operators (see the appendix),
it is easily seen that,
for a type I Arveson system $\E$,
the family of pointed systems $\{ (\E, u): u \in \UnitEnorm \}$
are all isomorphic.
However,
in view of a theorem of Tsirelson (\cite{Tsi-automorphism}),
this need not be true for
type II Arveson systems.
\end{remarks}

%%%%%%%%%%%%%%%%%%%%%%%%%%%%%%%%%%%%%%%%%%%%% New section

\section{Inclusion systems}
%\label{Product-inclusion}
\label{section: inclusion systems}

In this section we introduce notations for inclusion systems and recall their basic theory.
We also describe the Fock inclusion systems.
Inclusion systems are defined like product systems
except that their structure maps are only required to be isometric.
They arise very often in quantum dynamics.
For instance, the product system associated with a completely positive semigroup
on the algebra of bounded operators on a separable Hilbert space is in fact
the product system generated by an inclusion system derived from the semigroup
(\cite{BhS-tensor},\cite{MuS-Markov},\cite{Mar-CP},\cite{ShS-subproduct},\cite{BhM-inclusion}).
Our basic reference is~\cite{BhM-inclusion}, where inclusion systems were introduced.
Shalit and Solel also studied them, in a more abstract setting,
under the name \emph{subproduct systems} (\cite{ShS-subproduct}).

  \begin{definition}
An \emph{inclusion system} $E$ is a family of Hilbert spaces $(E_t)_{t>0}$
together with isometric \emph{structure maps}
$\betaEst: E_{s+t} \to E_s \ot E_t$ ($s,t>0$) satisfying
\[
( I^E_r \ot \betaEst ) \, \betaErsplust = ( \betaErs \ot I^E_t ) \, \betaErplusst
\qquad
(r,s,t > 0),
\]
where
$I^E_s := I_{E_s}$ ($s>0$).
\end{definition}

\begin{remark}
Thus a product system is an inclusion system whose structure maps are unitary.
\end{remark}

 \begin{definition}
Let $E$ be an inclusion system.
If, for all $t>0$, $F_t$ is a closed subspace of  $E_t$,
and, for all $s,t>0$, $\betaEst (F_{s+t}) \subset F_s \ot F_t$,
then the isometries
$\betaFst: F_{s+t} \to F_s \ot F_t$ ($s,t>0$) induced by compression render
$F$ an inclusion system.
Such systems are called \emph{inclusion subsystems of} $E$.
\end{definition}

We now define the product system generated by an inclusion system.
It is an inductive limit construction.

\begin{notation}
For $T>0$, set
\[
J_T := \bigcup_{n=1}^\infty J^{(n)}_T
\ \text{ where } \
J^{(n)}_T := \Big\{ {\bf t} \in ( \Real_{>0})^n: \, \sum t_i = T \Big\},
\]
and for $S,T>0$, $m,n \in \Nat$, ${\bf s } \in J^{(m)}_S$ and ${\bf t} \in J^{(n)}_T$,
set
\[
{\bf s} \smile {\bf t} := ( s_1, \cdots , s_m, t_1, \cdots , t_n) \in J^{(m+n)}_{S+T}.
\]
\end{notation}

A partial order on $J_T$ is defined as follows.
For ${\bf r} \in J^{(m)}_T$ and ${\bf s} \in J_T$,
\[
{\bf s} \ges {\bf r}
\ \text{ if } \
{\bf s} = {\bf r}_1 \smile \cdots \smile {\bf r}_m
\ \text{ where } \
 {\bf r}_i \in J_{r_i}
 \text{ for } i = 1, \cdots , m.
\]
Thus $(T) \les {\bf t}$ for all ${\bf t} \in J_T$.
The partially ordered set $J_T$ is directed:
\[
\forall_{{\bf r, s} \in J_T} \exists_{{\bf t} \in J_T}: \,
{\bf t} \ges {\bf r} \text{ and } {\bf t} \ges {\bf s}.
\]

Let $E$ be an inclusion system and fix $T > 0$ for now.
For ${\bf t} \in J^{(n)}_T$, set
$\Ebft := E_{t_1} \ot \cdots \ot  E_{t_n}$,
thus $E_{(T)} = E_T$.
Define isometries
$\big( \betaEbfst: \Ebfs \to \Ebft \big)_{{\bf s} \les {\bf t} \text{ in } J_T}$
as follows: for $p \in \Nat$ and ${\bf r} \in J^{(p)}_R$ set
\[
\betaERbfr =
\left\{
 \begin{array}{ll}
 I^E_R
 & \text{ if }
 {\bf r} = (R)
 \\
 \big( I^E_{r_1} \ot \cdots \ot I^E_{r_{p-2}} \ot \beta^E_{r_{p-1}, r_p} \big)
 \cdots
 \big( I^E_{r_1} \ot \beta^E_{r_2, r_3 + \cdots + r_p} \big) \beta^E_{r_1, r_2  + \cdots + r_p}
 & \text{ otherwise }
 \end{array}, \right.
\]
and for ${\bf s} \les {\bf t}$ with ${\bf s} \in J^{(m)}_ T$
and
${\bf t} = {\bf s}_1 \smile \cdots \smile {\bf s}_m$,
\[
\betaEbfst := \beta^E_{s_1, {\bf s}_1} \ot \cdots \ot \beta^E_{s_m, {\bf s}_m}.
\]
Thus $\betaEbfss = I^E_{{\bf s}} := I_{E_{{\bf s}}}$.

For $T>0$,
$\big( (\Ebft)_{{\bf t} \in J_T}, ( \betaEbfrs )_{{\bf r} \les {\bf s} \in J_T} \big)$
forms an inductive system of Hilbert spaces:
\[
\betaEbftt = I^E_{{\bf t}} \quad ({\bf t} \in J_T) \ \text{ and } \
\betaEbfst \, \betaEbfrs = \betaEbfrt \quad ({\bf r} \les {\bf s} \les {\bf t} \text{ in } J_T).
\]
Let
$\big( \E_T, ( \iEbft: \Ebft \to \ET )_{{\bf t} \in J_T} \big)$
denote its inductive limit.
For ease of reference, we list its key properties next.
\begin{rlist}
\item
\emph{Minimality}.
$\ET$ is a Hilbert space satisfying $\ET = \vee_{{\bf t} \in J_T} \Ran \iEbft$.
\item
\emph{Isometry}.
$\iEbft$ is an isometry (${\bf t} \in J_T$) and
$\iEbfs \circ \betaEbfrs = \iEbfr$
(${\bf r} \les {\bf s} \text{ in } J_T$).
\item
\emph{Subnet property}.
For any $K \subset J_T$ such that
$
\forall_{{\bf s} \in J_T} \exists_{{\bf t} \in K}\!: \,
{\bf t} \ges {\bf s}
$,
the inductive limit of
$\big( (\Ebft)_{{\bf t} \in K}, ( \betaEbfrs )_{{\bf r} \les {\bf s} \text{ in } K} \big)$
equals
$\big( \E_T, ( \iEbft: \Ebft \to \ET )_{{\bf t} \in K} \big)$.
\item
\emph{Universal property}.
For $K \subset J_T$ as in (iii),
and any family of Hilbert space isometries
$( \jmath_{{\bf t}} : \Ebft \to \Hil )_{{\bf t} \in K}$
satisfying
$
\jmath_{{\bf s}} \circ \betaEbfrs = \jmath_{{\bf r}}
$
(${\bf r} \les {\bf s} \text{ in } K$),
there is a unique isometry
$\jmath: \ET \to \Hil$ such that $\jmath_{{\bf t}} = \jmath \circ \iEbft$ (${\bf t} \in K$).
\end{rlist}

Now let $R,S>0$ and set
$J_R \smile J_S := \{ {\bf r} \smile {\bf s}: {\bf r} \in J_R, {\bf s} \in J_S \}$.
For
${\bf t} \in J_{R+S}$
there are
${\bf r} \in J_R$ and  ${\bf s} \in J_S$
such that
${\bf r} \smile {\bf s} \ges {\bf t}$.
Therefore,
by the subnet property (iii),
the inductive limit of
$\big( (\Ebft)_{{\bf t} \in J_R \smile J_S}, ( \betaEbfuv )_{{\bf u} \les {\bf v} \text{ in } J_R \smile J_S} \big)$
equals
$\big( \E_T, ( \iEbft: \Ebft \to \ET )_{{\bf t} \in J_R \smile J_S} \big)$
where $T = R+S$.
For
${\bf r}, {\bf r}' \in J_R$ and ${\bf s}, {\bf s}' \in J_S$
such that
$
{\bf r} \smile {\bf s} \les {\bf r}' \smile {\bf s}'
$,
necessarily
$
{\bf r}  \les {\bf r}'
$
and
$
{\bf s} \les  {\bf s}'
$
so
\[
( \imath^E_{{\bf r}'} \ot \imath^E_{{\bf s}'} ) \circ
\beta^E_{ {\bf r} \smile {\bf s}, {\bf r}' \smile {\bf s}' }
=
( \imath^E_{{\bf r}'} \circ \beta^E_{ {\bf r}, {\bf r}' } )
\ot
( \imath^E_{{\bf s}'} \circ \beta^E_{ {\bf s}, {\bf s}' } )
=
\imath^E_{{\bf r}} \ot \imath^E_{{\bf s}}.
\]
The family
$\big( \wt{\imath}_{ {\bf t} } :
\Ebft \to \ER \ot \ES )_{{\bf t} \in J_R \smile J_S} \big)$,
in which
$
\wt{\imath}_{ {\bf r} \smile {\bf s} } := \iEbfr \ot \iEbfs
$,
satisfies
$
\wt{\imath}_{ {\bf t}' } \circ \beta^E_{ {\bf t}, {\bf t}' }
=
 \wt{\imath}_{ {\bf t} }
$
for
$
{\bf t} \les {\bf t}'
$
in
$
J_R \smile J_S
$.
Therefore,
by the universal property (iv),
there is a unique isometry
$\BERS: \ERplusS \to \ER \ot \ES$
such that
$
\wt{\imath}_{ {\bf t} }
=
\BERS \circ \iEbft
$
(${\bf t} \in J_R \smile J_S$), equivalently
$
\iEbfr \ot \iEbfs = \BERS \circ \imath^E_{ {\bf r} \smile {\bf s} }
$
($
{\bf r}\in J_R$, ${\bf s} \in J_S
$).
It follows from the minimality property (i) that
$\Ran \BERS = \ER \ot \ES$,
so
$\BERS$ is unitary.
It is now easily verified that, for $R,S,T >0$,
$
{\bf r}\in J_R$, ${\bf s} \in J_S$ and ${\bf t} \in J_T
$,
\[
( \BERS \ot \IET ) \BERplusST \circ \imath^E_{ {\bf r} \smile {\bf s} \smile {\bf t}}
\ \text{ and } \
( \IER \ot \BEST ) \BERSplusT \circ \imath^E_{ {\bf r} \smile {\bf s} \smile {\bf t}}
\]
both equal
$\iEbfr \ot \iEbfs \ot \iEbft$.
Since
$
\bigcup_{ {\bf u} \in J_R \smile J_S \smile J_T } \Ran \imath^E_{{\bf u}}
$
is total in
$\ERplusSplusT$,
it follows that
$
( \BERS \ot \IET ) \BERplusST
=
( \IER \ot \BEST ) \BERSplusT
$
($R,S,T>0$).
In the above notations, we have established the following theorem.

\begin{theorem}
[\cite{BhM-inclusion}, Theorem 5]
The family $( \ET )_{T>0}$ defined above forms a product system with respect to the structure maps
$( \BEST )_{S,T>0}$.
\end{theorem}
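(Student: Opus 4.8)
The plan is to read off the structure maps $\BEST$ from the universal property of the inductive limits $\ET$, and then to obtain unitarity and the associativity relation by testing equalities of bounded operators on the total families $\bigcup_{{\bf t}} \Ran \imath^E_{{\bf t}}$. Since the notion of product system in force here is the purely algebraic one, no measurability considerations enter; indeed almost every ingredient needed has already been assembled in the discussion preceding the statement, and what remains is to check that it fits together into the three requirements of a product system (Hilbert fibres, unitary structure maps, hexagon consistency).

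Fix $R, S > 0$. First I would invoke the subnet property (iii) with the cofinal subset $K = J_R \smile J_S$ of $J_{R+S}$, so that $\big( \ERplusS, ( \imath^E_{{\bf r} \smile {\bf s}} )_{{\bf r} \in J_R,\, {\bf s} \in J_S} \big)$ realises the inductive limit over $K$. Next I would note that the isometries $\big( \imath^E_{{\bf r}} \ot \imath^E_{{\bf s}} : E_{{\bf r}} \ot E_{{\bf s}} \to \ER \ot \ES \big)_{{\bf r} \in J_R,\, {\bf s} \in J_S}$ are compatible with the connecting maps: when ${\bf r} \smile {\bf s} \les {\bf r}' \smile {\bf s}'$ one necessarily has ${\bf r} \les {\bf r}'$ and ${\bf s} \les {\bf s}'$, the connecting isometry factorises as $\beta^E_{{\bf r} \smile {\bf s},\, {\bf r}' \smile {\bf s}'} = \beta^E_{{\bf r},{\bf r}'} \ot \beta^E_{{\bf s},{\bf s}'}$, and then the isometry property (ii) gives $( \imath^E_{{\bf r}'} \ot \imath^E_{{\bf s}'} ) \circ \beta^E_{{\bf r} \smile {\bf s},\, {\bf r}' \smile {\bf s}'} = \imath^E_{{\bf r}} \ot \imath^E_{{\bf s}}$. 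The universal property (iv) then produces a unique isometry $\BERS : \ERplusS \to \ER \ot \ES$ with $\BERS \circ \imath^E_{{\bf r} \smile {\bf s}} = \imath^E_{{\bf r}} \ot \imath^E_{{\bf s}}$. Finally, minimality (i) for $\ER$ and for $\ES$ shows that $\Ran \BERS$ contains the total set $\bigcup_{{\bf r},{\bf s}} \Ran( \imath^E_{{\bf r}} \ot \imath^E_{{\bf s}})$, so $\BERS$ is surjective, hence unitary.

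For the consistency condition, I would fix $R, S, T > 0$ and ${\bf r} \in J_R$, ${\bf s} \in J_S$, ${\bf t} \in J_T$, and compute both $( \BERS \ot \IET ) \BERplusST \circ \imath^E_{{\bf r} \smile {\bf s} \smile {\bf t}}$ and $( \IER \ot \BEST ) \BERSplusT \circ \imath^E_{{\bf r} \smile {\bf s} \smile {\bf t}}$ by applying the defining relation for the various $B$'s in turn; each composite collapses to $\imath^E_{{\bf r}} \ot \imath^E_{{\bf s}} \ot \imath^E_{{\bf t}}$. Since $\bigcup_{{\bf u} \in J_R \smile J_S \smile J_T} \Ran \imath^E_{{\bf u}}$ is total in $\ERplusSplusT$, the two bounded operators agree, which is precisely $( \BERS \ot \IET ) \BERplusST = ( \IER \ot \BEST ) \BERSplusT$.

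I expect the only slightly delicate points to be the combinatorial bookkeeping around the directed set $J_T$: verifying that $J_R \smile J_S$ is genuinely cofinal in $J_{R+S}$ (needed before the subnet property may be applied), and that an order relation ${\bf r} \smile {\bf s} \les {\bf r}' \smile {\bf s}'$ in $J_{R+S}$ really does force ${\bf r} \les {\bf r}'$ in $J_R$ and ${\bf s} \les {\bf s}'$ in $J_S$ with the stated tensor factorisation of the connecting map. Granting these, the rest is a formal consequence of the four defining properties (i)--(iv) of the inductive limit.
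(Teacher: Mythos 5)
Your proposal is correct and follows essentially the same route as the paper: the paper's own argument (laid out in the paragraphs immediately preceding the theorem, which is why the statement is introduced with ``we have established'') constructs $\BERS$ via the subnet and universal properties over $J_R \smile J_S$, deduces unitarity from minimality, and verifies the consistency relation by checking both composites equal $\iEbfr \ot \iEbfs \ot \iEbft$ on the total set $\bigcup_{{\bf u} \in J_R \smile J_S \smile J_T} \Ran \imath^E_{{\bf u}}$, exactly as you outline. The ``delicate points'' you flag (cofinality of $J_R \smile J_S$ and the factorisation $\beta^E_{{\bf r} \smile {\bf s},\, {\bf r}' \smile {\bf s}'} = \beta^E_{{\bf r},{\bf r}'} \ot \beta^E_{{\bf s},{\bf s}'}$) are handled the same way in the paper and are indeed routine.
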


As mentioned above, this is called the \emph{product system generated by} $E$.

\begin{theorem}
Let $\E$ be a product system and let $F$ be an inclusion subsystem.
Then the product system generated by $F$ may be viewed as a product subsystem of $\E$.
\end{theorem}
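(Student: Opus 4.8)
The plan is to realise the product system generated by $F$ --- which I denote $\G$, with inductive‑limit data $\big(\G_T,(\iFbft\colon F_{\bf t}\to\G_T)_{{\bf t}\in J_T}\big)$ and structure maps $B^{\G}_{S,T}$ obtained from the construction above applied to the inclusion system $F$ --- as a product subsystem of $\E$. The engine is the universal property~(iv): I use it to manufacture a compatible family of isometries $\jmath_T\colon\G_T\to\E_T$, and then check that their images form a product subsystem of $\E$ onto which $\jmath=(\jmath_T)_{T>0}$ is an isomorphism of product systems.

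First I would set up the ingredient maps. For $T>0$ and ${\bf t}=(t_1,\dots,t_n)\in J^{(n)}_T$, let $\jmath_{\bf t}\colon F_{\bf t}\to\E_T$ be the composite of the inclusion $F_{\bf t}=F_{t_1}\ot\cdots\ot F_{t_n}\hookrightarrow\E_{t_1}\ot\cdots\ot\E_{t_n}$ with the inverse of the iterated structure map of the \emph{product} system $\E$ (built from the $\BEst$ just as the maps $\betaERbfr$ above were built from the $\betaEst$); on elementary tensors $\jmath_{\bf t}(x_1\ot\cdots\ot x_n)=x_1\bfcdot\cdots\bfcdot x_n$, the product being taken in $\E$, and $\jmath_{\bf t}$ is an isometry. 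Since each basic structure map of $F$ is the compression of the corresponding one of $\E$, the iterated refinement maps $\betaFbfrs\colon F_{\bf r}\to F_{\bf s}$ ($\,{\bf r}\les{\bf s}$ in $J_T$) from the construction are likewise restrictions of the iterated structure maps of $\E$; combining this with the consistency condition for $\E$ (equivalently, associativity of $\bfcdot$) gives $\jmath_{\bf s}\circ\betaFbfrs=\jmath_{\bf r}$ for all ${\bf r}\les{\bf s}$ in $J_T$. By the universal property~(iv), applied to the inductive system defining $\G_T$, there is then a unique isometry $\jmath_T\colon\G_T\to\E_T$ with $\jmath_T\circ\iFbft=\jmath_{\bf t}$ for every ${\bf t}\in J_T$; put $\mathcal H_T:=\Ran\jmath_T$, a closed subspace of $\E_T$.

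The remaining, and main, point is that $\jmath$ intertwines structure maps: $(\jmath_S\ot\jmath_T)\circ B^{\G}_{S,T}=\BEST\circ\jmath_{S+T}$ for all $S,T>0$, where $\BEST\colon\E_{S+T}\to\E_S\ot\E_T$ is the structure map of $\E$. Because the ranges of the $\imath^F_{{\bf r}\smile{\bf s}}$ ($\,{\bf r}\in J_S$, ${\bf s}\in J_T$) are total in $\G_{S+T}$, it is enough to verify this after precomposing with each $\imath^F_{{\bf r}\smile{\bf s}}$: the left side becomes $\jmath_{\bf r}\ot\jmath_{\bf s}$, by the defining relation $B^{\G}_{S,T}\circ\imath^F_{{\bf r}\smile{\bf s}}=\iFbfr\ot\iFbfs$; and the right side also becomes $\jmath_{\bf r}\ot\jmath_{\bf s}$, using $\jmath_{S+T}\circ\imath^F_{{\bf r}\smile{\bf s}}=\jmath_{{\bf r}\smile{\bf s}}$, associativity of $\bfcdot$ to regroup the factors so that $\jmath_{{\bf r}\smile{\bf s}}(\xi\ot\eta)=\jmath_{\bf r}(\xi)\bfcdot\jmath_{\bf s}(\eta)$ for $\xi\in F_{\bf r}$, $\eta\in F_{\bf s}$, and then $\BEST(x\bfcdot y)=x\ot y$. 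Granting this, and that $B^{\G}_{S,T}$ maps $\G_{S+T}$ unitarily onto $\G_S\ot\G_T$, we get $\BEST(\mathcal H_{S+T})=(\jmath_S\ot\jmath_T)(\G_S\ot\G_T)=\mathcal H_S\ot\mathcal H_T$; hence $\mathcal H=(\mathcal H_T)_{T>0}$ is a product subsystem of $\E$, its compressed structure maps are intertwined with those of $\G$ by the unitaries $\jmath_T$, and so $\jmath$ is an isomorphism of product systems identifying $\G$ with $\mathcal H\subseteq\E$.

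I do not expect a genuine obstacle: the work is organisational. The point needing most care is keeping the two families of iterated structure maps --- those of $\E$ and those of $F$ --- aligned: that $\betaFbfrs$ really is the restriction to $F_{\bf r}$ of the iterated structure map of $\E$ (an induction on tuple length from $\BEst(F_{s+t})\subseteq F_s\ot F_t$), and that the ``regrouping'' identities invoked above for $\jmath_{\bf s}\circ\betaFbfrs$ and for $\jmath_{S+T}\circ\imath^F_{{\bf r}\smile{\bf s}}$ are exactly the consistency condition of $\E$ iterated (again a routine induction). Everything else --- the isometry of each $\jmath_{\bf t}$, the appeal to~(iv), and the final assembly --- is formal.
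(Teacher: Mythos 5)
Your proposal is correct and follows essentially the same route as the paper: the maps $\jmath_{\bf t}$ you construct are exactly the isometries $(\BETbft)^*|_{\Fbft}$, the compatibility check and appeal to the universal property (iv) coincide with the paper's, and the intertwining $(\jmath_S\ot\jmath_T)\circ B^{\F}_{S,T}=\BEST\circ\jmath_{S+T}$ is verified, as in the paper, on the total set of ranges of the $\imath^F_{{\bf r}\smile{\bf s}}$ via the regrouping identity $B^{\E}_{S+T,{\bf r}\smile{\bf s}}=(\BESbfs\ot\BETbft)\BEST$.
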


\begin{proof}
Let $\F$ be the product system generated by $F$.
We need to obtain an isometric morphism of product systems $\jmath: \F \to \E$.

Let $T>0$.
Consider the isometries
$
( \BETbft )^*|_{\Fbft} : \Fbft \to \ET
$
(${\bf t} \in J_T$).
For
${\bf r}  \les {\bf s}$ in $J_T$,
\[
( \BETbfs )^*|_{\Fbfs} \circ \betaFbfrs
=
( \BETbfs )^* {\BEbfrs}|_{\Fbfr}
=
( \BETbfr )^*|_{\Fbfr}
\]
Therefore, by the universal property (iv),
there is a unique isometry
$\jmath_T: \FT \to \ET$,
such that
$\jmath_T \circ \iFbft = ( \BETbft )^*|_{\Fbft}$
for the canonical maps
$\iFbft: \Fbft \to \FT$ (${\bf t} \in J_T$).

Now fix $S,T>0$.
In view of the identity
\[
B^{\mathcal{E}}_{S+T, {\bf s} \smile {\bf t}}
=
( \BESbfs \ot \BETbft ) \BEST
\qquad
( {\bf s} \in J_S, {\bf t} \in J_T ),
\]
which is not hard to verify,
\[
\BEST \circ \jmath_{S+T} \circ \imath^F_{ {\bf s} \smile {\bf t} }
=
\BEST \circ ( B^{\mathcal{E}}_{S+T, {\bf s} \smile {\bf t}} )^*|_{ F_{ {\bf s} \smile {\bf t} } }
=
( \BESbfs \ot \BETbft )^*|_{ F_{ {\bf s} } \ot F_{ {\bf t} } }
\]
and so, since

\[
( \jmath_S \ot \jmath_T ) \circ \BFST \circ \imath^F_{ {\bf s} \smile {\bf t} }
=
( \jmath_S \ot \jmath_T ) \circ ( \iFbfs \ot \iFbft )
=
( \BESbfs )^*|_{ F_{ {\bf s} } }
\ot
( \BETbft )^*|_{ F_{ {\bf t} } },
\]
the operators
$\BEST \circ \jmath_{S+T}$ and
$( \jmath_S \ot \jmath_T ) \circ \BFST$
agree on the set
$
\bigcup_{ {\bf u} \in J_S \smile J_T } \Ran \imath^F_{{\bf u}}
$
which is total in
$\FSplusT$.
It follows that the family of isometries $( \jmath_T: \FT \to \ET )_{T>0}$
forms a morphism of product systems, as required.
\end{proof}

\begin{definition}
Let $E$ and $F$ be inclusion systems.
A \emph{morphism} $E \to F$ is
a family of bounded operators $A = ( A_t: E_t \to F_t )_{t>0}$
satisfying the compatibility condition
\begin{equation}
\label{eqn: morphism incl}
A_{s+t} = ( \betaFst )^* (A_s \ot A_t ) \betaFst
\qquad
(s,t>0),
\end{equation}
and the quasicontractivity condition
$e^{-kt} \norm{A_t} \les 1$ ($t>0$)
for some $k \in \Real$.
It is called a \emph{strong morphism}
if~\eqref{eqn: morphism incl} is strengthened to
$
\betaFst A_{s+t} =  (A_s \ot A_t ) \betaFst
$
($s,t>0$).

A \emph{unit} of $E$ is a nonzero quasicontractive section $u$ of $E$ satisfying
\[
u_{s+t} = ( \betaEst )^* u_s \ot u_t
\qquad
(s,t>0);
\]
it is called a \emph{strong unit} if
this is strengthened to
$
\betaEst u_{s+t} =  u_s \ot u_t
$
($s,t>0$).
\end{definition}

\begin{remark}
A section $x$ of an inclusion system $E$
may be thought of as a family of bounded operators
$X = ( X_t := \dyad{x_t}{1} : \Comp_t \to E_t )_{t>0}$,
where
$( \Comp_t )_{t>0}$
is the one-dimensional inclusion system with $\Comp_t = \Comp$ ($t>0$)
and
$\beta^\Comp_{s,t}: \lambda \mapsto \lambda \ot 1 = \lambda$ ($s,t>0$).
Then $x$ is a (strong) unit if and only if $X$ is a (strong) morphism.
\end{remark}

\begin{theorem}
[\cite{BhM-inclusion}, Theorem 10]
\label{thm: 2.13}
Let
$\E$ be the product system generated by the inclusion system $E$.
Then the family of canonical maps $\iE := ( \iE_t: E_t \to \Et )_{t>0}$
forms a strong isometric morphism of inclusion systems.
Moreover
$( \iE )^* := ( ( \iE_t )^*: \Et \to E_t )_{t>0}$
restricts to a bijection from the set of units of $\E$ to the set of units of $E$,
whose inverse is denoted by $u \mapsto \uhat$.
\end{theorem}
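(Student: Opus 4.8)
The plan is to prove the two assertions separately: the first is a direct manipulation of the inductive-limit data, while the bijection carries the real content. Throughout I would fix $T>0$, retain the notation and the four listed properties (minimality, isometry, subnet, universal) of the inductive system $\big((\Ebft)_{{\bf t}\in J_T},(\betaEbfrs)_{{\bf r}\les{\bf s}}\big)$ with limit $\ET$, and use the relation $\iEbfr\ot\iEbfs=\BERS\circ\imath^E_{{\bf r}\smile{\bf s}}$ defining the structure maps of $\E$. For the \textbf{strong morphism assertion} it suffices to use two-block partitions: by the isometry property and the identity $\beta^E_{(s+t),(s,t)}=\betaEst$ (immediate from the definition of the $\betaEbfrs$) one has $\iE_{s+t}=\imath^E_{(s+t)}=\imath^E_{(s,t)}\circ\betaEst$, while the defining relation with ${\bf r}=(s)$, ${\bf s}=(t)$ gives $\BEst\,\imath^E_{(s,t)}=\iE_s\ot\iE_t$; composing yields $\BEst\,\iE_{s+t}=(\iE_s\ot\iE_t)\betaEst$ for all $s,t>0$, which is the strong-morphism identity. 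Since each $\iE_t$ is an isometry of norm one, $\iE$ is a strong isometric morphism. Iterating the same computation over a multi-index ${\bf t}\in J_T$ records, for later use, $B^{\mathcal{E}}_{{\bf t}}\,\iEbft=\imath^E_{t_1}\ot\cdots\ot\imath^E_{t_n}$ where $B^{\mathcal{E}}_{{\bf t}}\colon\ET\to\mathcal{E}_{t_1}\ot\cdots\ot\mathcal{E}_{t_n}$ is the iterated structure map.

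For the \textbf{bijection}, first let $w\in\UnitE$ and set $v_t:=(\iE_t)^*w_t$. Taking adjoints in the strong-morphism identity and applying the result to $w_{s+t}=w_s\bfcdot w_t$ gives $v_{s+t}=(\betaEst)^*(v_s\ot v_t)$, while $\norm{v_t}\les\norm{w_t}$ gives quasicontractivity; that $v$ is nonzero will emerge at the end. Conversely, given a unit $u$ of $E$, put $\ubft:=u_{t_1}\ot\cdots\ot u_{t_n}\in\Ebft$ and form the net $\xi_{{\bf t}}:=\iEbft(\ubft)\in\ET$ (${\bf t}\in J_T$). From $(\iEbfr)^*\iEbfs=(\betaEbfrs)^*$ and the generalized unit relation $(\betaEbfrs)^*u_{{\bf s}}=u_{{\bf r}}$ for ${\bf r}\les{\bf s}$ (a short induction from $u_{a+b}=(\beta^E_{a,b})^*u_a\ot u_b$ and the consistency conditions) one gets the ``martingale'' identity $P_{{\bf r}}\xi_{{\bf s}}=\xi_{{\bf r}}$ for ${\bf r}\les{\bf s}$, where $P_{{\bf r}}:=\iEbfr(\iEbfr)^*$ is the projection onto $\Ran\iEbfr$. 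Hence $\norm{\xi_{{\bf s}}}^2=\norm{\xi_{{\bf r}}}^2+\norm{\xi_{{\bf s}}-\xi_{{\bf r}}}^2$, so $(\norm{\xi_{{\bf t}}}^2)_{{\bf t}\in J_T}$ is nondecreasing and, by quasicontractivity of $u$, bounded by $e^{2kT}$; it therefore converges, the net $(\xi_{{\bf t}})$ is Cauchy by the Pythagorean identity, hence convergent, and I set $\uhat_T:=\lim_{{\bf t}\in J_T}\xi_{{\bf t}}\in\ET$. \emph{This convergence is the main obstacle: when $u$ is not a strong unit the $\iEbft(\ubft)$ fail to be compatible in the way the universal property demands, and it is exactly the martingale identity together with monotonicity and the quasicontractive bound that forces the net to converge.}

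Next I would check that $\uhat$ is a unit of $\E$ with $(\iE)^*\uhat=u$. Applying $(\imath^E_{(T)})^*$ to $\xi_{{\bf t}}$ and using $(\betaEbfrs)^*u_{{\bf s}}=u_{{\bf r}}$ with ${\bf r}=(T)$ shows $(\imath^E_{(T)})^*\xi_{{\bf t}}=u_T$ for every ${\bf t}$, whence $(\iE_T)^*\uhat_T=u_T$ by continuity; in particular $\uhat\neq 0$. Since $J_s\smile J_t$ is cofinal in $J_{s+t}$, restricting the net to indices ${\bf r}\smile{\bf s}$ with ${\bf r}\in J_s$, ${\bf s}\in J_t$, using $\iEbfr\ot\iEbfs=\BEst\circ\imath^E_{{\bf r}\smile{\bf s}}$, and passing to the limit (continuity of $\BEst$ and of the tensor product on bounded nets) gives $\BEst\,\uhat_{s+t}=\uhat_s\ot\uhat_t$; with quasicontractivity this makes $\uhat$ a unit of $\E$. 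Thus $u\mapsto\uhat$ is a right inverse to $(\iE)^*$.

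Finally, to see it is also a left inverse, take $w\in\UnitE$ and $v:=(\iE)^*w$. The iterated relations $B^{\mathcal{E}}_{{\bf t}}\,\iEbft=\imath^E_{t_1}\ot\cdots\ot\imath^E_{t_n}$ and $B^{\mathcal{E}}_{{\bf t}}\,w_T=w_{t_1}\ot\cdots\ot w_{t_n}$ (the generalized unit relation in $\E$) give $(\iEbft)^*w_T=v_{t_1}\ot\cdots\ot v_{t_n}=:v_{{\bf t}}$, so $\iEbft(v_{{\bf t}})=P_{{\bf t}}w_T$. As ${\bf t}$ increases the subspaces $\Ran\iEbfr$ increase (because $\iEbfr=\iEbfs\circ\betaEbfrs$) with total union $\ET$ by minimality, so $P_{{\bf t}}\to I$ strongly along $J_T$ and $\iEbft(v_{{\bf t}})\to w_T$; but the convergence argument above applies verbatim to $v$ (only quasicontractivity and the inclusion-unit relation were used), so $\iEbft(v_{{\bf t}})\to\vhat_T$ as well, forcing $\vhat_T=w_T$, i.e.\ $\widehat{(\iE)^*w}=w$. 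In particular $v\neq 0$ (otherwise $w=\vhat=0$), so $(\iE)^*$ does map $\UnitE$ into the units of $E$, and $(\iE)^*$ and $u\mapsto\uhat$ are mutually inverse bijections between $\UnitE$ and the set of units of $E$.
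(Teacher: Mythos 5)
Your proposal is correct, and every step checks out against the inductive-limit properties listed in Section 2: the two-block identities $\iE_{s+t}=\imath^E_{(s,t)}\circ\betaEst$ and $\iE_s\ot\iE_t=\BEst\circ\imath^E_{(s,t)}$ do give the strong isometric morphism; the identities $(\iEbfr)^*\iEbfs=(\betaEbfrs)^*$ and $(\betaEbfrs)^*u_{\bf s}=u_{\bf r}$ justify the martingale relation; and the cofinality of $J_s\smile J_t$ in $J_{s+t}$ legitimises the factorisation of $\uhat$. Note that the paper does not prove this theorem itself --- it imports it from~\cite{BhM-inclusion} --- so the only in-paper comparison is with the proof of its additive analogue, Proposition~\ref{propn: 3.12}, and there your route is genuinely, if mildly, different: for addits the paper must first establish boundedness of the net $\big(\iEbft\abft\big)_{{\bf t}\in J_T}$, extract a weak limit, and then upgrade to norm convergence via the projection identity $\ibfs\ibfs^*\ahat_T=\ibfs\abfs$, whereas you exploit multiplicativity of units to get the Pythagorean relation $\norm{\xi_{\bf s}}^2=\norm{\xi_{\bf r}}^2+\norm{\xi_{\bf s}-\xi_{\bf r}}^2$, so the bounded monotone net of norms forces norm convergence directly, with no detour through weak limits. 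Your left-inverse argument (via $B^{\E}_{T,{\bf t}}\iEbft=\imath^E_{t_1}\ot\cdots\ot\imath^E_{t_n}$, $P_{\bf t}\to I$ strongly, and uniqueness of the limit) is exactly the mechanism the paper uses to prove injectivity of $\alpha$ in Proposition~\ref{propn: 3.12}, and it neatly disposes of the nonvanishing of $(\iE)^*w$ at the end, which is the one point a careless treatment might omit. One small remark worth making explicit: since $\E$ is here only an algebraic product system, ``unit of $\E$'' must be read as a nonzero quasicontractive section with $\BEst u_{s+t}=u_s\ot u_t$ (no measurability), which is the reading your proof supplies and the one consistent with the remark that quasicontractivity is crucial.
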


\begin{remarks}
(i)
The quasicontractivity condition on units is crucial for the above result.

(ii)
The unit $\uhat$ of $\E$ is called the \emph{lift} of the unit $u$ of $E$;
$u = ( \iE )^* ( \uhat )$.

(iii)
For units $u$ and $v$ of $E$ and $T>0$,
\[
\ip{\uhat_T}{\vhat_T} = \lim_{{\bf t} \in J_T } \ip{u_{{\bf t}}}{v_{{\bf t}}}
\]
where,
for $n \in \Nat$ and ${\bf t} \in J_T^{(n)}$,
$u_{{\bf t}} := u_{t_1} \ot \cdots \ot u_{t_n}$.
In particular, $\uhat$ is normalised if $u$ is.

(iv)
Similarly (see~\cite{BhM-inclusion}, Theorem 11),
every morphism of inclusion systems $A:E \to F$
 lifts to a unique morphism $\Ahat: \E \to \F$ of the generated product systems.
 In terms of the corresponding canonical morphisms,
$A_t = ( \iFt )^* \Ahat_t \, \iE_t$
($t>0$).
The map $A \to \Ahat$ is a bijection between the corresponding spaces of morphisms
which respects both isometry and unitarity.
\end{remarks}

We end this section with a key example.

\begin{example} (Fock inclusion systems.)
Let $\kil$ be a separable Hilbert space.
The Fock Arveson system over $\kil$, denoted $\Fockk$,
is defined in the appendix, where the Guichardet picture of it is also described.
In the notations used there,
the \emph{Fock inclusion system over} $\kil$, denoted $F^\kil$,
is defined as follows:
\begin{align*}
F^\kil_t = \wh{\Kil_t}
&
:=
\Comp \oplus \Kil_t
\\
&
\subset
\Gamma (\Kil_t ) = \Fockk_t
\qquad
(t>0)
\end{align*}
and, in terms of the canonical identifications
\begin{align*}
&
\wh{\Kil_s} \ot \wh{\Kil_t} =
\Comp \oplus \Kil_s \oplus \Kil_t \oplus (\Kil_s \ot \Kil_t),
\ \text{ and }
\\
&
\wh{\Kil_r} \ot \wh{\Kil_s} \ot \wh{\Kil_t}
=
\Comp \oplus \Kil_r \oplus \Kil_s \oplus \Kil_t \oplus \Kil_{r,s,t}
\ \text{ where}
\\
&
\Kil_{r,s,t} :=
( \Kil_r \ot \Kil_s \op \Kil_r \ot \Kil_t \op \Kil_s \ot \Kil_t )
\op
( \Kil_r \ot \Kil_s \ot \Kil_t )
\qquad
(r,s,t>0),
\end{align*}
its structure maps are defined as follows:
for $s,t > 0$ and $(\lambda,g) \in F^\kil_{s+t}$,
\[
\betaFkst ( \lambda, g )
=
\big( \lambda, g_{[0,s[}, (S^\kil_s)^* g_{[s, s+t[}, 0 \big)
\in
\Comp \op \Kil_s \op \Kil_t \op ( \Kil_s \ot \Kil_t )
\]
or,
in the notation
$\Vac_t = (1,0) \in \wh{\Kil_t}$,
\[
\betaFkst ( \lambda, g )
=
\lambda \, \Vac_s \ot \Vac_t
+
\big( 0, g_{[0,s[} \big) \ot \Vac_t
+
\Vac_s \ot \big( 0, (S^\kil_s)^* g_{[s, s+t[} \big)
\in
\wh{\Kil_s} \ot \wh{\Kil_t}.
\]
For $r, s, t > 0$ and $(\lambda, g) \in F^\kil_{r+s+t}$,
\[
\big(
\lambda,
g_{[0,r[},
(S^\kil_r)^* g_{[r, r+s[},
(S^\kil_{r+s})^* g_{[r+s, r+s+t[},
  0
\big)
\in
\Comp \op \Kil_r \op \Kil_s \op \Kil_t \op  \Kil_{r,s,t},
\]
is a common expression for
\[
( \betaFkrs \ot I ) \betaFkrplusst ( \lambda, g )
\ \text{ and } \
( I \ot \betaFkst ) \betaFkrsplust ( \lambda, g ).
\]
In terms of the subspace inclusions
$\jmath^\kil_t: F^\kil_t \to \Fockkt$ ($t>0$),
the structure maps of the inclusion system $F^\kil$ and
Arveson system $\Fockk$ are related by
\[
\BFkst \circ \jmath^\kil_{s+t}
=
( \jmath^\kil_s \ot \jmath^\kil_t ) \circ \betaFkst
\qquad
(s,t>0).
\]
Thus
$F^\kil$ is an inclusion subsystem of $\Fockk$.

In Corollary~\ref{cor: Fk gen} below,
we verify that $F^\kil$ generates the Fock Arveson system $\Fockk$.
\end{example}

\begin{remark}
The failure
of the Fock inclusion system $F^\kil$ to be a product system
is already clearly seen through the identity
\[
\wh{\Kil_s} \ot \wh{\Kil_t} \ominus \Ran \betaFkst
=
\Kil_s \ot \Kil_t
\qquad
(s,t>0).
\]
\end{remark}

%%%%%%%%%%%%%%%%%%%%%%%%%%%%%%%%%%%%%%%%%%%%% New section

\section{Addits of pointed Arveson systems}
%\label{Product-inclusion}
\label{section: addits Arveson}

In this section we introduce the additive counterpart to the multiplicative notion of unit.
This requires the fixing of a reference unit of the Arveson system
and so is relevant to spatial Arveson systems.
We show that the space of addits then has a natural Hilbert space structure
with a one-dimensional subspace of `trivial' addits.
Elements of the orthogonal complement of this subspace are called roots and,
when the reference unit is normalised,
the subspace of roots is shown to be isomorphic to the index space of the Arveson system.
This isomorphism is established by first revealing the root space of a Fock Arveson system
with respect to the vacuum unit.

\begin{definition}
Let $(\E,u)$ be an Arveson system with (not necessarily normalised) unit.
An \emph{addit} of  $(\E,u)$
is a measurable section $a$ of $\E$ satisfying
\[
a_{s+t} = a_s \bfcdot u_t + u_s \bfcdot a_t
\qquad
(s,t>0);
\]
a \emph{root} of  $(\E,u)$ is an addit $a$ satisfying
\[
u_t \perp a_t
\qquad
(t>0).
\]
\end{definition}

\begin{remarks}
(i)
The set of addits of $(\E,u)$ forms a subspace,
denoted $\AdditEu$,
of the space of measurable sections of $\E$,
as does the set of roots,
denoted $\RootEu$.

(ii)
\emph{Normalisation}.
Let $a \in \AdditEu$ and $\lambda \in \Comp$.
Then
\[
b := ( e^{\lambda t} a_t )_{t>0} \in \AdditEv
\ \text{ for the unit } \
v := ( e^{\lambda t} u_t )_{t>0}.
\]

(iii)
\emph{Trivial addits}.
For  $\lambda \in \Comp$, $( \lambda  t u_t )_{t>0} \in \AdditEu$.
We refer to these as \emph{trivial addits} of  $(\E,u)$,
and write $\TrivEu$ for the space of these.
Note that
\[
\TrivEu \cap \RootEu = \{ 0 \},
\]
and, for $a,b \in \TrivEu$,
\[
\ip{a_t}{b_t} = t^2 \ip{a_1}{b_1} \norm{u_t}^2 / \norm{u_1}^2.
\]

(iv)
\emph{Direct sum decomposition}.
For $a \in \AdditEu$,
define
\[
a^{\Triv}:= \Big( \frac{\ip{u_t}{a_t}}{\norm{u_t}^2} \, u_t \Big)_{t>0}
\ \text{ and } \
a^{\Root}:= a - a^{\Triv}.
\]
\emph{Claim}.
$a^{\Triv} \in \TrivEu$
and
$a^{\Root} \in \RootEu$,
so
$\AdditEu = \TrivEu \oplus \RootEu$.

Since
\[
\ip{u_t}{a^{\Root}_t} = \ip{u_t}{a_t} - \ip{u_t}{a_t} = 0
\qquad
(t>0),
\]
it remains to show that $a^{\Triv}$ is a trivial addit of $(\E,u)$.
Since
\[
\frac{\ip{u_{s+t}}{a_{s+t}}}{\norm{u_{s+t}}^2}
=
\frac{\ip{u_{s}}{a_{s}}}{\norm{u_{s}}^2}
+
\frac{\ip{u_{t}}{a_{t}}}{\norm{u_{t}}^2}
\qquad
(s,t>0),
\]
the measurable function
$f_a: \Real_{>0} \to \Comp$, $t \mapsto \ip{u_t}{a_t}/\norm{u_t}^2$
satisfies Cauchy's additive functional equation
and so $f_a(t) = f_a(1) t$, in other words
$a^{\Triv} = ( \lambda t u_t )_{t>0}$
where $\lambda = \norm{u_1}^{-2} \ip{u_1}{a_1}$.
In particular $a^{\Triv} \in \TrivEu$.

(v)
Let $a, b \in \RootEu$,
and suppose that $u$ is normalised.
Then
\begin{align*}
\ip{a_{s+t}}{b_{s+t}}
&=
\ip{a_s}{b_s} \ip{u_t}{u_{t}} + \ip{u_s}{u_{s}} \ip{a_t}{b_t}
\\
&=
\ip{a_s}{b_s}   + \ip{a_t}{b_t}
\qquad
(s,t>0).
\end{align*}
Therefore, appealing to measurability once more,
\[
\ip{a_t}{b_t} = t \ip{a_1}{b_1}
\qquad
(t>0).
\]
(\emph{Cf}. (iii)).
\end{remarks}

The above remarks indicate the usefulness of the notion of pointed Arveson system
(Definition~\ref{defn: pointed}).

\begin{notation}
To a pointed Arveson system $(\E, u)$
we associate the family of bounded operators $( \thetaEut )_{t>0}$
defined by
\[
\thetaEut :=
t P_{\Comp u_1} + \sqrt{t} P_{\Comp u_1}^\perp
\in B(\E_1)
 \qquad
(t>0).
\]
\end{notation}

\begin{remarks}
Let $a, b \in \AdditEu$
for a pointed Arveson system $(\E, u)$.
For all $t>0$,
\begin{align*}
&a^{\Triv}_t
=
t a^{\Triv}_1 = t \ip{u_1}{a_1} u_1 = t P_{\Comp u_1} a_1,
\\
&\ip{u_t}{b^{\Root}_t}
=
\ip{u_t}{b_t} - \ip{u_t}{b^{\Triv}_t} = 0,
\text{ so }
a^{\Triv}_t \perp b^{\Root}_t,
\\
&\ip{a_t}{b_t}
=
\ip{a^{\Triv}_t}{b^{\Triv}_t} + \ip{a^{\Root}_t}{b^{\Root}_t}
\\
&
\quad \qquad
=
t^2 \ip{a^{\Triv}_1}{b^{\Triv}_1} + t \ip{a^{\Root}_1}{b^{\Root}_1}
=
\ip{ \theta_t a_1}{ \theta_t b_1},
\ \text{ where } \
\theta_t := \thetaEut.
\end{align*}
\end{remarks}

 \begin{proposition}
 \label{propn: AEu}
Let $(\E, u)$ be a pointed Arveson system.
Then the prescription
\begin{equation}
\label{eqn: IP on A}
\ip{a}{b} := \ip{a_1}{b_1}_{\E_1}
\end{equation}
endows the vector space $\AdditEu$ with the structure of a Hilbert space
for which the direct sum decomposition
\[
\AdditEu = \TrivEu \oplus \RootEu
\]
is an orthogonal decomposition.
 \end{proposition}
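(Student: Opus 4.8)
The plan is to read off the whole statement from the identity $\ip{a_t}{b_t} = \ip{\thetaEut a_1}{\thetaEut b_1}$ ($a,b\in\AdditEu$, $t>0$) established in the Remarks immediately preceding the statement: it carries all the analytic content and the rest is routine. First I would verify that \eqref{eqn: IP on A} is an inner product. Sesquilinearity and nonnegativity are inherited verbatim from the inner product of $\E_1$. For definiteness, suppose $\ip{a}{a}=\norm{a_1}^2=0$; then $a_1=0$, so $\norm{a_t}^2 = \norm{\thetaEut a_1}^2 = 0$ for every $t>0$, whence $a=0$.

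Next, the decomposition $\AdditEu = \TrivEu \oplus \RootEu$ of vector spaces is already in hand (Remark~(iv) near the start of this section); to see it is orthogonal for \eqref{eqn: IP on A} I would note that $a\in\TrivEu$ forces $a_1\in\Comp u_1$ while $b\in\RootEu$ forces $b_1\perp u_1$, so $\ip{a}{b}=\ip{a_1}{b_1}=0$. It then remains to prove completeness. Given a Cauchy sequence $(a^n)_{n\ges1}$ in $\AdditEu$, the sequence $(a^n_1)_n$ is Cauchy in $\E_1$, with limit $\xi$ say; and for each fixed $t>0$ the displayed identity gives $\norm{a^n_t-a^m_t} \les \norm{\thetaEut}\,\norm{a^n_1-a^m_1}$, so $(a^n_t)_n$ converges in $\E_t$ to some $a_t$. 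Setting $a:=(a_t)_{t>0}$, one checks that $a$ is a measurable section and passes to the limit in $a^n_{s+t}=a^n_s\bfcdot u_t+u_s\bfcdot a^n_t$ to obtain $a\in\AdditEu$; since $\norm{a^n-a}^2=\norm{a^n_1-\xi}^2\to0$, the space $\AdditEu$ is complete.

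The hard part, such as it is, lies only in two easily-dispatched points of the completeness argument: that a pointwise limit of measurable sections of a measurable field of Hilbert spaces is again measurable (test against the countable family generating the measurable structure), and that the product $\bfcdot$ --- being the bilinear map underlying the unitary identification $\E_{s+t}\cong\E_s\ot\E_t$ --- is jointly continuous, via $\norm{x\bfcdot y - x'\bfcdot y'}\les\norm{x}\,\norm{y-y'}+\norm{x-x'}\,\norm{y'}$, so that the addit relation survives the limit. Everything else is bookkeeping powered by the single identity $\ip{a_t}{b_t} = \ip{\thetaEut a_1}{\thetaEut b_1}$.
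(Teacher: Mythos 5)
Your proposal is correct and follows essentially the same route as the paper: both arguments rest entirely on the identity $\ip{a_t}{b_t}=\ip{\thetaEut a_1}{\thetaEut b_1}$ for definiteness and for the Cauchy-sequence completeness argument, and then pass to the limit in the addit relation. The only cosmetic difference is in the final step: you check mutual orthogonality of $\TrivEu$ and $\RootEu$ directly (which, combined with the algebraic decomposition of Remark (iv), is all that is needed), whereas the paper phrases it as the identity $(\TrivEu)^\perp=\RootEu$; these are equivalent here.
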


 \begin{proof}
 Set $\theta_t := \thetaEut$ ($t>0$).

 Clearly~\eqref{eqn: IP on A} defines a nonnegative sesquilinear form on $\AdditEu$.
 Suppose that $a \in \AdditEu$ satisfies $\ip{a}{a} = 0$.
 Then $a_1 = 0$ and so $\norm{a_t} = \norm{\theta_t a_1} = 0$ ($t>0$), so $a=0$.
 Thus~\eqref{eqn: IP on A} defines an inner product on $\AdditEu$.
 Suppose next that $( a^{(n)} )$ is a Cauchy sequence with respect to the induced metric on $\AdditEu$.
 Then, for all $t>0$,
 \[
 \norm{ a^{(n)}_t - a^{(m)}_t } =
 \norm{ \theta_t a^{(n)}_1 - \theta_t a^{(m)}_1 }_{\E_1}
 \les
  \norm{ \theta_t } \norm{ a^{(n)}_1 - a^{(m)}_1 }_{\E_1}
  =
  \max \{ t, \sqrt{t} \} \norm{ a^{(n)}_1 - a^{(m)}_1 }_{\E_1}
 \]
 ($n,m \in \Nat$),
 so $( a^{(n)}_t )$ is Cauchy, and thus convergent, in $\Et$.
 Set $a_t := \lim_{n \to \infty} a^{(n)}_t \in \E_t$ ($t>0$).
 Then $a$ is a measurable section of $\E$ satisfying
 \[
 a_{s+t}
 =
 \lim_{n \to \infty} a^{(n)}_{s+t}
 =
 \lim_{n \to \infty} \big( a^{(n)}_s \bfcdot u_t + u_s \bfcdot a^{(n)}_t \big)
 =
 a_s \bfcdot u_t + u_s \bfcdot a_t
 \qquad
 (s,t>0),
 \]
 so $a \in \AdditEu$.
 Moreover
 \[
 \norm{ a^{(n)} - a } =
 \norm{ a^{(n)}_1 - a_1 }_{\E_1} \to 0 \text{ as } n \to \infty.
 \]
 Therefore $\AdditEu$ is complete and thus
 a Hilbert space with respect to the inner product~\eqref{eqn: IP on A}.

 It remains to show that $( \TrivEu )^\perp = \RootEu$.
 It follows from remarks above that $\RootEu \subset ( \TrivEu )^\perp$;
 the reverse inclusion follow since
 \[
 a \in ( \TrivEu )^\perp
 \implies
 a^{\Triv}_1
 =
 \frac{\ip{u_1}{a_1}}{\norm{u_1}^2} \, u_1
 =
 0
 \implies
 a^{\Triv} = 0
 \implies
 a \in \RootEu.
 \]
\end{proof}

We next find the roots of the pointed Fock Arveson system $(\Fockk, \Vack)$,
for a separable Hilbert space $\kil$,
by working in the Guichardet picture (described in the appendix).
Thus
\[
\Vackt = \delta_\emptyset \in \Fockkt
\qquad
(t>0)
\]
and,
for $c \in \kil$ we define the measurable section
$
\chi^c:= ( c_{[0,t[} )_{t>0}
$
of $\Fockk$, in which
\[
c_{[0,t[} (\sigma) =
\left\{
 \begin{array}{ll}
c
 & \text{ if }
 \sigma \in \Gamma^{(1)}_{[0,t[}
 \\
 0
 & \text{ otherwise }
 \end{array}. \right.
\]

\begin{remark}
Both $\Vack$ and each $\chi^c$ are actually
sections of the Fock inclusion system $F^\kil$.
\end{remark}

\begin{proposition}
\label{propn: roots of Fock}
Let $\kil$ be a separable Hilbert space.
The prescription
\begin{equation}
\label{eqn: c to chi}
c \mapsto \chi^c
\qquad
(c \in \kil)
\end{equation}
defines an isometric isomorphism
from $\kil$
to $\RootFkVac$,
the space of roots of the pointed Arveson system $(\Fockk, \Vack )$.
\end{proposition}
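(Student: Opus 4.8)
The plan is to verify everything concretely in the Guichardet picture of $\Fockk$ (appendix), where $\Vackt = \delta_\emptyset$ and $\chi^c_t = c_{[0,t[}$ is supported on one-point sets. One shows in turn: (1) each $\chi^c$ is a root and $c \mapsto \chi^c$ is a linear isometry; (2) it is surjective. Injectivity then comes for free from (1).

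\emph{That $\chi^c$ is a root, and the isometry.} Each $\chi^c$ is a measurable section of $\Fockk$ --- indeed, as recorded in the remark above, it is a section of the Fock inclusion subsystem $F^\kil$, with $\chi^c_t = (0, c_{[0,t[}) \in \Comp \op \Kil_t = F^\kil_t$. Applying the formula for $\betaFkst$ to $(0, c_{[0,s+t[})$ and using $\big(c_{[0,s+t[}\big)_{[0,s[} = c_{[0,s[}$ together with $(S^\kil_s)^* c_{[s, s+t[} = c_{[0,t[}$ gives $\betaFkst \chi^c_{s+t} = \chi^c_s \ot \Vack_t + \Vack_s \ot \chi^c_t$; composing with the subspace inclusions $\jmath^\kil$ and invoking $\BFkst \circ \jmath^\kil_{s+t} = (\jmath^\kil_s \ot \jmath^\kil_t) \circ \betaFkst$, this reads $\chi^c_{s+t} = \chi^c_s \bfcdot \Vack_t + \Vack_s \bfcdot \chi^c_t$, so $\chi^c$ is an addit of $(\Fockk, \Vack)$; since $c_{[0,t[}$ is supported on one-point sets it is orthogonal to $\delta_\emptyset = \Vackt$, whence $\chi^c \in \RootFkVac$. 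Linearity in $c$ is clear. By Proposition~\ref{propn: AEu} the inner product on $\RootFkVac$ is $\ip{\chi^c}{\chi^d} = \ip{\chi^c_1}{\chi^d_1}_{\Fockk_1}$, and $\chi^c_1 = c_{[0,1[}$ sits in the one-particle subspace $\Kil_1 \cong L^2([0,1[;\kil)$ of $\Fockk_1$ as the constant function $x \mapsto c$, so $\ip{\chi^c_1}{\chi^d_1} = \int_0^1 \ip{c}{d}_\kil \, dx = \ip{c}{d}_\kil$. Hence $c \mapsto \chi^c$ is isometric, in particular injective.

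\emph{Surjectivity.} Let $a \in \RootFkVac$. Iterating the addit relation along a partition $0 = t_0 < t_1 < \cdots < t_n = T$ and using $\Vack_{s+t} = \Vack_s \bfcdot \Vack_t$ gives $a_T = \sum_{i=1}^n \Vack \bfcdot \cdots \bfcdot a_{t_i - t_{i-1}} \bfcdot \cdots \bfcdot \Vack$, with $a$ in the $i$-th factor; in the Guichardet picture the $i$-th summand is supported on those finite $\sigma \subset [0,T[$ with $\sigma \subset [t_{i-1}, t_i[$. Since the partition is arbitrary, it follows that $a_T$ is supported (a.e.) on $\sigma$ with $\abs{\sigma} \les 1$; and $a_T \perp \delta_\emptyset = \Vackt$ removes the empty-set component, so $a_T$ is supported on singletons, hence represented by a square-integrable function $x \mapsto a_T(\{x\}) \in \kil$ on $[0,T[$. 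The factorisation $a_{T'} = a_T \bfcdot \Vack + \Vack \bfcdot a_{T'-T}$ ($T < T'$) yields $a_{T'}(\{x\}) = a_T(\{x\})$ for $x < T$, so there is a measurable, locally square-integrable $\phi \colon \Rplus \to \kil$ with $\phi(x) = a_T(\{x\})$ whenever $T > x$; reading the same relation on $[s, s+t[$ gives $\phi(x) = a_{s+t}(\{x\}) = a_t(\{x - s\}) = \phi(x - s)$ for a.e.\ $x \in (s, s+t)$, all $s, t > 0$. A locally $L^2$ function on $\Rplus$ with that translation property is a.e.\ constant, say $\phi \equiv c \in \kil$; then $a_t = c_{[0,t[} = \chi^c_t$ for all $t$. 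Thus $c \mapsto \chi^c$ is onto, hence an isometric isomorphism.

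The only point requiring genuine care is the surjectivity step: first, extracting from the iterated addit relation that $a_T$ lives on subsets of cardinality at most one --- which involves a routine null-set argument in the direct-integral Guichardet description, separating any two prescribed points of a given $\sigma$ by a suitable rational partition point --- and second, upgrading translation-invariance of the singleton profile $\phi$ to constancy. Everything else is bookkeeping in the Fock inclusion system.
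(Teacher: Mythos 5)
Your proposal is correct, and its skeleton coincides with the paper's own argument up to the last step: the verification that each $\chi^c$ is a root, the isometry $\norm{\chi^c_1}=\norm{c}$, and the reduction of an arbitrary root $a$ to singleton support (via arbitrary partitions, made rigorous by using only rational partition points so that countably many null sets suffice, then separating two points of $\sigma$ by a rational cut) are exactly the paper's Claims~1 and~2, and you have flagged the null-set point correctly. Where you genuinely diverge is the identification of $a$ with some $\chi^c$. The paper sets $c = V^* a_1$ (with $V\colon \kil\to\Kil_1$, $c\mapsto \chi^c_1$) and computes $\ip{a_t}{e_{[0,s[}} = \ip{a_s}{\chi^e_s} = s\,\ip{a_1}{\chi^e_1}$, using the previously established scaling $\ip{a_t}{b_t}=t\ip{a_1}{b_1}$ for roots (itself a consequence of measurability and Cauchy's additive functional equation), and then concludes $a_t=\chi^c_t$ from totality of $\{e_{[0,s[}: e\in\kil,\ 0<s<t\}$ in $\Kil_t$. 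You instead read the addit relation pointwise on singletons, obtaining a locally square-integrable profile $\phi$ with $\phi(x)=\phi(x-s)$ a.e., and invoke the standard fact that a locally integrable function invariant under a dense set of translations (rational $s$ suffice, which also disposes of the null-set dependence on $s$) is a.e.\ constant. Both routes are sound; the paper's trades on the Hilbert-space structure of the root space already set up in Section~3 and stays at the level of inner products, while yours is more measure-theoretic and self-contained in the Guichardet picture, at the cost of importing the translation-invariance-implies-constancy lemma in its vector-valued form (harmless, via coordinates against an orthonormal basis of $\kil$).
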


\begin{proof}
Abbreviate
$(\Fockk, \Vack)$ to $(\Fock, \Vac)$, and $\RootFkVac$ to $\RootOmega$,
and let $\Kil_t$ be as in the appendix.

 \emph{Claim 1}.
$\chi^c \in \RootOmega$  ($c \in \kil$).

Fix $c \in \kil$.
Let $s,t>0$, then
for a.a. $\sigma$
\begin{align*}
( \chi^c_s \bfcdot \, \Vac_t &+ \Vac_s \bfcdot \, \chi^c_t )(\sigma)
\\
&=
 \chi^c_s \big( \sigma \cap [0, s[ \big)
 \,
  \delta_\emptyset \big( \sigma \cap [s, s+t[ \big)
 +
 \delta_\emptyset \big(  \sigma \cap [0, s[ \big)
 \,
 \chi^c_t \big( ( \sigma \cap [s, s+t[ ) - s \big)
 \\
 &=
 \left\{
 \begin{array}{ll}
c
 & \text{ if }
 \sigma \in \Gamma^{(1)}, \text{ and either } \sigma \subset [0,s[  \text{ or } \sigma \subset [s, s+t[
 \\
 0
 & \text{ otherwise }
 \end{array} \right.
 \\
&=
 \left\{
 \begin{array}{ll}
c
 & \text{ if }
 \sigma \in \Gamma^{(1)}_{s+t}
 \\
 0
 & \text{ otherwise }
 \end{array} \right.
 \\
& =
 \chi^c_{s+t} (\sigma),
\end{align*}
so
 $ \chi^c_{s+t} = \chi^c_s \bfcdot \, \Vac_t + \Vac_s \bfcdot \, \chi^c_t$
 ($s,t>0$).
 Since
 $\chi^c_t \perp \Vac_t$ ($t>0$),
 it follows that
 $\chi^c \in \RootOmega$.

 Now let $a \in \RootOmega$
 and set $c = V^* a_1 \in \kil$
 where $V$ is the isometry
 $\kil \to \Kil_1 \subset \Fock_1$, $c \mapsto \chi^c_1$.

 \emph{Claim 2}.
 $\esssupp a_t \subset \Gamma^{(1)}_{t}$ ($t>0$).
 \newline
 Fix $t>0$.
 For $q \in \Rat \, \cap \, ]0,t[$, $a_t = a_q \bfcdot \, \Vac_{t-q} + \Vac_q \bfcdot \, a_{t-q}$
 so,
 for a.a.    $\sigma$,
 \[
 a_t(\sigma) =
 1_{\Gamma_{[0,t[}}(\sigma)
 \big[
 a_q \big( \sigma \cap [0,q[ \big) \, \delta_\emptyset \big( (\sigma \cap [q,t[) -q \big)
 +
 \delta_\emptyset \big( \sigma \cap [0,q[ \big) \, a_{t-q}\big( (\sigma \cap [q,t[) -q \big)
 \big]
 \]
 and therefore
 \[
 a_t(\sigma) = 0 \text{ unless either } \sigma \subset [0,q[  \text{ or }  \sigma \subset [q,t[.
 \]
 Thus,
 by the countability of $\Rat$,
 there is a null set $\Nullset$ of $\Gamma_{[0,t[}$ such that
 \[
 \forall_{\sigma \in \Gamma_{[0,t[} \setminus \Nullset} \,
 \forall_{ q \in \Rat \, \cap \, ]0,t[ }\!: \, a_t(\sigma) = 0
 \text{ unless } \sigma  \subset [0,q[  \text{ or }  \sigma \subset [q,t[.
 \]
 For $\sigma = \{ s_1 < \cdots < s_n \} \in \Gamma^{(\ges 2)}_{[0,t[} \setminus \Nullset$,
 choosing $q \in \Rat$ such that
 $s_1 < q < s_2$,
 we have $\sigma \not\subset [0,q[$ and  $\sigma \not\subset [q,t[$
 so $a_t(\sigma) = 0$.
 Thus $\esssupp a_t \subset \Gamma^{(\les 1)}_{[0,t[}$.
 Since $a$ is a root of $(\Fock, \Vac)$,
 $0 = \ip{\Vac_t}{a_t} = a_t(\emptyset)$,
 thus
 $\esssupp a_t \subset \Gamma^{(1)}_{[0,t[}$.

 \emph{Claim 3}.
 $a = \chi^c$.
 \newline
 Fix $t>0$.
 By the proven Claims 2 and 1,
 $a_t, \chi^c_t \in \Kil_t \subset \Fock_t$ and $a, \chi^c \in R_\Omega$.
 It follows that,
 for each $e \in \kil$ and $s \in ]0,t[$,
 \begin{align*}
 \ip{a_t}{e_{[0,s[}}
 &=
 \ip{a_s \bfcdot \, \Vac_{t-s} + \Vac_s \bfcdot \, a_{t-s}}{\chi^e_s  \bfcdot \, \Vac_{t-s}}
 \\
 &=
 \ip{a_s}{\chi^e_s}
 \\
 &=
 s  \ip{a_1}{\chi^e_1}
 =
 s \ip{c}{e}
 =
  \ip{c_{[0,t[}}{e_{[0,s[}}
  =
   \ip{\chi^c_t}{e_{[0,s[}}.
 \end{align*}
 Therefore,
 since  $a_t, \chi^c_t \in \Kil_t$
 and the set
 $\{ e_{[0,s[}: e \in \kil, 0<s<t  \}$
 is total in $\Kil_t$,
 $a_t = \chi^c_t$.
 Thus $a = \chi^c$.

 The prescription~\eqref{eqn: c to chi} therefore defines a bijection $\kil \to \RootOmega$.
 The bijection is manifestly linear and, since
 $\norm{\chi^c}_{\RootOmega} = \norm{\chi^c_1} = \norm{ c_{[0,1[} } = \norm{c}_\kil$
 ($c\in\kil$),
 it is isometric too and thus an isometric isomorphism.
\end{proof}

\begin{corollary}
\label{cor: tensor}
Let $( \E, u ) = \big( \Fock^{\kil_1} \ot \Fock^{\kil_2}, \Vac^{\kil_1} \ot \Vac^{\kil_2} \big)$
for separable Hilbert spaces $\kil_1$ and $\kil_2$.
Then,
in the above notation,
\[
\RootEu =
\big( R_\Vac^{\Fock, \kil_1} \ot \Vac^{\kil_2} \big) \oplus \big( \Vac^{\kil_1} \ot R_\Vac^{\Fock, \kil_2} \big).
\]
\end{corollary}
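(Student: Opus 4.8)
The plan is to deduce the corollary from Proposition~\ref{propn: roots of Fock} by transporting everything across the exponential isomorphism of Fock Arveson systems $\Fock^{\kil_1}\ot\Fock^{\kil_2}\cong\Fock^{\kil_1\op\kil_2}$. The first ingredient is the elementary observation that any isomorphism $\phi=(\phi_t)_{t>0}$ of pointed Arveson systems induces an isometric isomorphism of the corresponding spaces of roots, via $a\mapsto(\phi_t a_t)_{t>0}$. This is immediate: since $\phi$ intertwines the structure maps it is multiplicative, $\phi_{s+t}(x\bfcdot y)=(\phi_s x)\bfcdot(\phi_t y)$, which together with $\phi$ carrying reference unit to reference unit shows that $(\phi_t a_t)_{t>0}$ is an addit of the target whenever $a$ is one of the source; unitarity of each $\phi_t$ preserves orthogonality to the reference unit; and measurability is automatic by Theorem~\ref{thm: 7.16}. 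Applying the same reasoning to $\phi^{-1}$ produces the inverse map.

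Next I would recall (from the appendix) that the exponential property of symmetric Fock space, applied fibrewise, furnishes an isomorphism of Arveson systems
\[
\Theta:\Fock^{\kil_1\op\kil_2}\to\Fock^{\kil_1}\ot\Fock^{\kil_2}
\]
carrying the vacuum unit $\Vac^{\kil_1\op\kil_2}$ to $u=\Vac^{\kil_1}\ot\Vac^{\kil_2}$. On first chaos, $\Theta$ acts by $h_1\op h_2\mapsto h_1\ot\delta_\emptyset+\delta_\emptyset\ot h_2$; since, under the fibrewise identification of the first-chaos space of $\Fock^{\kil_1\op\kil_2}$ with the direct sum of those of $\Fock^{\kil_1}$ and $\Fock^{\kil_2}$, one has $(c_1,c_2)_{[0,t[}=(c_1)_{[0,t[}\op(c_2)_{[0,t[}$ for $c_j\in\kil_j$, it follows that $\Theta$ sends the root $\chi^{(c_1,c_2)}$ of $(\Fock^{\kil_1\op\kil_2},\Vac^{\kil_1\op\kil_2})$ to the section $\big(\chi^{c_1}_t\ot\Vac^{\kil_2}_t+\Vac^{\kil_1}_t\ot\chi^{c_2}_t\big)_{t>0}$ of $\E$.

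Finally I would assemble the pieces. By Proposition~\ref{propn: roots of Fock} applied with $\kil=\kil_1\op\kil_2$, the sections $\chi^{(c_1,c_2)}$, $c_1\in\kil_1$, $c_2\in\kil_2$, are exactly the elements of $R_\Vac^{\Fock,\kil_1\op\kil_2}$; transporting across $\Theta$ (an isometric isomorphism of root spaces, by the first step) therefore identifies $\RootEu$ with the set of all sections $\big(\chi^{c_1}_t\ot\Vac^{\kil_2}_t+\Vac^{\kil_1}_t\ot\chi^{c_2}_t\big)_{t>0}$, that is, with $\big(R_\Vac^{\Fock,\kil_1}\ot\Vac^{\kil_2}\big)+\big(\Vac^{\kil_1}\ot R_\Vac^{\Fock,\kil_2}\big)$, where Proposition~\ref{propn: roots of Fock} is used once more, for each separate factor, to identify $\{\chi^{c_j}:c_j\in\kil_j\}$ with $R_\Vac^{\Fock,\kil_j}$. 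The two summands are mutually orthogonal because, for $c_1\in\kil_1$, $c_2\in\kil_2$ and $t>0$,
\[
\ip{\chi^{c_1}_t\ot\Vac^{\kil_2}_t}{\Vac^{\kil_1}_t\ot\chi^{c_2}_t}=\ip{\chi^{c_1}_t}{\Vac^{\kil_1}_t}\,\ip{\Vac^{\kil_2}_t}{\chi^{c_2}_t}=0,
\]
so the sum is an orthogonal direct sum of closed subspaces, which is the claimed formula.

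The only genuine work is the bookkeeping in the second paragraph: pinning down the action of $\Theta$ on the first chaos and hence on the sections $\chi^{(c_1,c_2)}$, using the identifications recorded in the appendix; everything else is formal. One could instead sidestep $\Theta$ and rerun the argument of Proposition~\ref{propn: roots of Fock} directly inside $\Fock^{\kil_1}_t\ot\Fock^{\kil_2}_t$, showing that a root is essentially supported on single-point configurations lying entirely in one of the two tensor factors, but invoking the exponential isomorphism is the shorter route.
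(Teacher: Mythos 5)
Your argument is correct and is essentially the paper's own proof: both transport the roots $\chi^{(c_1,c_2)}$ of $(\Fock^{\kil_1\op\kil_2},\Vac^{\kil_1\op\kil_2})$ across the natural (exponential) isomorphism onto $\big(\Fock^{\kil_1}\ot\Fock^{\kil_2},\Vac^{\kil_1}\ot\Vac^{\kil_2}\big)$, invoke Proposition~\ref{propn: roots of Fock} to know these exhaust the root space, and conclude by the evident orthogonality. The only cosmetic difference is that you compute the image of $\chi^{(c_1,c_2)}$ via the action on the first chaos, whereas the paper obtains it as the limit $\lim_{\lambda\to 0}\lambda^{-1}\big(\ve^{\lambda c_1}_t\ot\ve^{\lambda c_2}_t-\Vac^{\kil_1}_t\ot\Vac^{\kil_2}_t\big)$.
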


\begin{proof}
Set $\kil = \kil_1 \oplus \kil_2$.

Under the natural isomorphism of pointed Arveson systems
$( \Fockk, \Vack ) \to ( \E, u )$,
the unit $\ve^e$ of $\Fockk$
maps to
the unit $\ve^{e_1} \ot \ve^{e_2}$ of $\E$,
for $e = (e_1, e_2) \in \kil$.
Therefore,
for $c = (c_1, c_2) \in \kil$,
the root $\chi^c$ of $( \Fockk, \Vack )$ maps to the root
$\chi^{c_1,c_2}$ of $( \E, u )$ given by
\begin{align*}
\chi^{c_1,c_2}_t
&=
\lim_{\lambda \to 0}
\lambda^{-1}
\big( \ve^{\lambda c_1}_t \ot \ve^{\lambda c_2}_t -
\Vac^{\kil_1} \ot \Vac^{\kil_2} \big)
\\
&=
\chi^{c_1}_t  \ot \Vac^{\kil_2}_t + \Vac^{\kil_1}_t \ot \chi^{c_2}_t
\qquad
(t>0).
\end{align*}
In view of the orthogonality relation
\[
\chi^{c_1}_t  \ot \Vac^{\kil_2}_t \perp \Vac^{\kil_1}_t \ot \chi^{c_2}_t
\qquad
( c_1 \in \kil_1, c_2 \in \kil_2, t>0 ),
\]
the result follows.
\end{proof}

Our goal now is to show that
the addits of a pointed Arveson system generate the type I part of the Arveson system.
We first show this for type I systems.

\begin{lemma}
\label{lemma: uRtoF}
Let $\kil$ be  separable Hilbert space.
Then the vacuum unit and its roots generate the Fock Arveson system $\Fockk$.
\end{lemma}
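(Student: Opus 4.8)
The plan is to show that the product subsystem $\F$ of $\Fockk$ generated by the vacuum unit $\Vack$ and its roots coincides with $\Fockk$, by exhibiting inside each fibre $\F_t$ a total subset of $\Fockk_t$. By Proposition~\ref{propn: roots of Fock} the roots of $(\Fockk,\Vack)$ are precisely the sections $\chi^c$, $c \in \kil$, so $\F$ has $\Vack$ and every $\chi^c$ among its sections, i.e.\ $\Vack_t, \chi^c_t \in \F_t$ for all $t>0$ and $c \in \kil$. Since $\F$ is a product subsystem of $\Fockk$, its structure maps identify $\F_{s+t}$ with $\F_s \ot \F_t$, so $\F_t$ is a closed subspace of $\Fockk_t$ containing every iterated product $x_{t_1} \bfcdot \cdots \bfcdot x_{t_n}$ with $n \in \Nat$, $\sum_i t_i = t$ and each $x_{t_i} \in \{ \Vack_{t_i} \} \cup \{ \chi^c_{t_i} : c \in \kil \}$. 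The first step is simply to record this family of elements of $\F_t$.

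The second step is to identify these products in the Guichardet picture of $\Fockk$ (see the appendix): an element of $\Fockk_t$ is a square-integrable function $\sigma \mapsto f(\sigma) \in \kil^{\ot |\sigma|}$ on the configuration space $\Gamma_{[0,t[} = \bigsqcup_{k \ges 0} \Gamma^{(k)}_{[0,t[}$; here $\Vack_t = \delta_\emptyset$, $\chi^c_t$ is supported on $\Gamma^{(1)}_{[0,t[}$ with constant value $c$, and $\bfcdot$ acts by splitting and concatenating configurations across the relevant intervals, elementary-tensor slots being ordered by time. Fix a partition $0 = T_0 < \cdots < T_n = t$ with blocks $I_i = [T_{i-1}, T_i[$, put $\chi^{c^i}$ on a subset of blocks indexed by $i_1 < \cdots < i_k$ and $\Vack$ on the remaining blocks. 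Because each $\chi$-factor is supported on one-point configurations and each $\Vack$-factor on the empty configuration, the product $x_{|I_1|} \bfcdot \cdots \bfcdot x_{|I_n|}$ is the $k$-particle vector supported on those $\sigma = \{s_1 < \cdots < s_k\}$ with $s_j \in I_{i_j}$ for each $j$, with value $c^{i_1} \ot \cdots \ot c^{i_k}$ there; in other words it is the elementary $k$-particle vector $\indic_{I_{i_1} \times \cdots \times I_{i_k}} \ot ( c^{i_1} \ot \cdots \ot c^{i_k} )$ lying in the $k$-particle subspace $L^2 \big( \Gamma^{(k)}_{[0,t[} \big) \ot \kil^{\ot k}$ of $\Fockk_t$. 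Letting $\Vack$-blocks occupy the gaps between a prescribed increasing sequence of disjoint subintervals, we conclude that $\F_t$ contains $\Vack_t$ together with every vector $\indic_{A_1 \times \cdots \times A_k} \ot ( d^1 \ot \cdots \ot d^k )$, $k \ges 1$, for $A_1 < \cdots < A_k$ disjoint subintervals of $[0,t[$ and $d^1, \dots, d^k \in \kil$.

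The third step is the density conclusion: for each $k$, indicators of increasing tuples of disjoint subintervals are total in $L^2 \big( \Gamma^{(k)}_{[0,t[} \big)$ since step functions are dense in $L^2$, and elementary tensors are total in $\kil^{\ot k}$, so together with $\Vack_t$ (spanning the $0$-particle subspace) these vectors exhaust $\Fockk_t = \bigoplus_{k \ges 0} L^2 \big( \Gamma^{(k)}_{[0,t[} \big) \ot \kil^{\ot k}$. Hence the closed subspace $\F_t$ equals $\Fockk_t$ for every $t>0$, and $\F = \Fockk$. I expect the only step calling for real care to be the second one --- the Guichardet-picture bookkeeping confirming that an iterated $\bfcdot$-product of one-particle and vacuum vectors across consecutive intervals is exactly the stated rectangle-supported elementary tensor --- the first and third steps being respectively the product-subsystem property and the density of step functions. (One could instead recognise $\Vack_t + \chi^c_t$ as the fibre of a unit of the Fock \emph{inclusion} system, show via Theorem~\ref{thm: 2.13} that the associated unit $\ve^c$ of $\Fockk$ is a norm-limit of such products, hence a unit of $\F$, and then invoke that $\Fockk$ is of type~I; the direct argument above avoids both the inclusion-system machinery and any appeal to type~I.)
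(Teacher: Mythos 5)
Your argument is correct, but it follows a genuinely different route from the paper. The paper's proof never computes general products of root and vacuum fibres: it fixes $c\in\kil$ and shows directly that $\big( \Vac^\kil_{2^{-n}t} + \chi^c_{2^{-n}t} \big)^{\bfcdot \, 2^n} \to \ve^c_t$, via a norm bound together with convergence of inner products against exponential vectors of dyadic step functions (Euler's exponential formula), and then invokes the fact that the exponential units generate $\Fockk$; so it reduces the lemma to the known type~I generation of Fock space. You instead bypass the units altogether: you compute, in the Guichardet picture, that an iterated $\bfcdot$-product of $\chi$-fibres and vacuum fibres over a partition is exactly the time-ordered rectangle vector $\indic_{A_1\times\cdots\times A_k}\ot(d^1\ot\cdots\ot d^k)$, and you conclude by totality of these vectors in each $k$-particle subspace. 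Your second step is sound (it is the same kind of configuration-splitting computation the paper performs inside the proof of Proposition~\ref{propn: roots of Fock}), and the totality claim is true, though the justification ``step functions are dense'' is slightly terse: grid cells meeting the diagonal of the simplex are not of the increasing-rectangle form, so one should either note that their total measure vanishes as the mesh is refined, or run a $\pi$-system/monotone-class argument using that increasing rectangles with disjoint intervals are stable under intersection and generate the Borel structure of the open simplex up to null sets. What each approach buys: the paper's argument is shorter given the type~I fact and its limit formula is reused verbatim in the proof of Theorem~\ref{thm: 3.7}, whereas yours is self-contained at the level of the Fock space (no appeal to units generating $\Fockk$, no Euler limit) and exhibits explicit total subsets of each fibre; your parenthetical alternative via lifting a unit of the Fock inclusion system is closer in spirit to, but still not identical with, the paper's proof.
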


\begin{proof}
Since the set of roots of $(\Fockk, \Vack)$ is $\{ \chi^c: c \in \kil \}$,
and
$\Fockk$ is generated by its units
$\{ ( e^{\lambda t} \ve^c_t )_{t>0}: c \in \kil, \lambda \in \Comp \}$,
it suffices to prove that
\[
\big( \Vac^\kil_{2^{-n}t} + \chi^c_{2^{-n}t} \big)^{\bfcdot \, 2^n}
\to
\ve^c_t
\ \text{ as } \
n \to \infty
\qquad
(c \in \kil, t>0).
\]
Thus fix $c \in \kil$ and $t > 0$,
and set $x_n := \Vac^\kil_{2^{-n}t} + \chi^c_{2^{-n}t}$ ($n \in \Nat$).
Since
$
\| (x_n)^{ {\bfcdot} \, 2^n}\|^2 =
(1 + 2^{-n} t \norm{c}^2 )^{2^n} \les
e^{t\norm{c}^2}
=
\norm{ \ve^c_t }^2
$
($n \in \Nat$),
it suffices to prove that
\[
\big\la \ve(g), (x_n)^{{\bfcdot} \, 2^n} \big\ra
\to
\ip{\ve(g)}{\ve^c_t}
\ \text{ as } \ n \to \infty
\]
for all right continuous step functions $g \in \Kil_t$
whose discontinuities lie in the set
$\{ j 2^{-N} t: j,N \in \Nat \}$.
Thus
fix such a step function $g =\sum_{i=1}^p d^{i-1}_{[s_{i-1},s_i[}$
in which $s_0 = 0$ and $s_p = t$.
Then, for sufficiently large $n$,
\[
s_i = 2^{-n}  k_i(n) t
\ \text{ for some } \
k_i(n) \in \Nat
\qquad
(i = 1, \cdots , p).
\]
It therefore follows,
by Euler's exponential formula,
that

\begin{align*}
\ip{\ve(g)}{ (x_n)^{\bfcdot \, 2^n} }
&=
\prod_{i=1}^p
\Big\la \ve( d^{i-1}_{[0,s_i - s_{i-1}[} ) , (x_n)^{\bfcdot \, ( k_i(n) - k_{i-1}(n) )} \Big\ra
\\
&=
\prod_{i=1}^p
\big( 1 +  2^{-n} t \, \ip{ d^{i-1} }{ c } \big)^{ k_i(n) - k_{i-1}(n) }
\\
&=
\prod_{i=1}^p
\Big( 1 + \frac{ s_i - s_{i-1} }{ k_i(n) - k_{i-1}(n) } \, \ip{ d^{i-1} }{ c } \Big)^{ k_i(n) - k_{i-1}(n) }
\\
&\to
\prod_{i=1}^p
e^{ (s_i - s_{i-1}) \ip{ d^{i-1} }{ c } }
=
\ip{\ve(g)}{ \ve^c_t }
\ \text{ as } n \to \infty,
\end{align*}
as required.
\end{proof}

\begin{corollary}
\label{cor: Fk gen}
Let $\kil$ be a separable Hilbert space.
Then the product system generated by the Fock inclusion system $F^\kil$
is the Fock Arveson system $\Fockk$.
\end{corollary}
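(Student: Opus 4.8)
The plan is to combine the generation theorem for inclusion subsystems with the approximation already carried out for Lemma~\ref{lemma: uRtoF}. Write $\F$ for the product system generated by the Fock inclusion system $F^\kil$. Since $F^\kil$ is an inclusion subsystem of $\Fockk$ (as established in the Example), I would first invoke the theorem asserting that the product system generated by an inclusion subsystem of a product system may be viewed as a product subsystem of the ambient product system, thereby identifying $\F$ with a product subsystem of $\Fockk$. Under this identification the fibre $F^\kil_t$ sits inside $\F_t$ for each $t>0$ (the canonical map associated with the trivial partition $(t)$ being the inclusion), and since $\F$ is a product subsystem, every iterated product $x_1 \bfcdot \cdots \bfcdot x_n$ with $x_i \in F^\kil_{t_i}$ and $\sum t_i = t$ lies in $\F_t$.

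Next I would use the Remark preceding Proposition~\ref{propn: roots of Fock}, namely that the vacuum section $\Vack$ and each $\chi^c$ take values in the fibres of $F^\kil$. Fixing $c \in \kil$ and $t>0$, this gives $x_n := \Vack_{2^{-n}t} + \chi^c_{2^{-n}t} \in F^\kil_{2^{-n}t}$, and hence $(x_n)^{\bfcdot\, 2^n} \in \F_t$ for every $n \in \Nat$ by the previous step. The computation in the proof of Lemma~\ref{lemma: uRtoF} yields $(x_n)^{\bfcdot\, 2^n} \to \ve^c_t$ in $\Fockk_t$ as $n \to \infty$; since $\F_t$ is closed, $\ve^c_t \in \F_t$, and therefore $e^{\lambda t}\ve^c_t \in \F_t$ for all $\lambda \in \Comp$. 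Thus $\F$ is a product subsystem of $\Fockk$ whose fibres contain the value at every positive time of every unit of $\Fockk$.

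Finally I would conclude via type I-ness: $\Fockk$ coincides with its type I part, the smallest product subsystem containing all its units, so a product subsystem of $\Fockk$ containing every unit vector must be all of $\Fockk$, giving $\F = \Fockk$. I do not expect any real obstacle; the argument is bookkeeping layered on Lemma~\ref{lemma: uRtoF}. The one point meriting a little care is making the identification of $\F$ with a product subsystem of $\Fockk$ precise enough to be sure that the iterated $\bfcdot$-products of elements of the inclusion fibres genuinely lie in $\F_t$ — and that is exactly what the cited generation theorem provides.
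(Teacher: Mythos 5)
Your argument is correct and is essentially the paper's own proof, unpacked: the paper simply cites the remark that $\Vack$ and the $\chi^c$ are sections of $F^\kil$ together with Lemma~\ref{lemma: uRtoF}, which encapsulates exactly the limit $(\Vack_{2^{-n}t}+\chi^c_{2^{-n}t})^{\bfcdot\,2^n}\to\ve^c_t$ and the type~I generation step you spell out. Your care about identifying the generated system with a product subsystem of $\Fockk$ (so that the iterated products and their limits lie in the closed fibres) is exactly the role played by the cited generation theorem, so there is no gap.
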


\begin{proof}
In view of the remark which precedes Proposition~\ref{propn: roots of Fock},
this follows from Lemma~\ref{lemma: uRtoF}.

\end{proof}

\begin{theorem}
\label{thm: 3.7}
Let $\E$ be a spatial Arveson system.
Let $u \in \UnitEnorm$ and
let $\F$ be the product subsystem of $\E$ generated by $u$ and all its roots.
Then the following hold.
\begin{alist}
\item
$\F = \EI$.
\item
$\RootEu = R^{\EI}_u$.
\item
$\ind \E = \dim \RootEu$.
\end{alist}
\end{theorem}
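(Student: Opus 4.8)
The plan is to reduce everything to the Fock case, where Lemma~\ref{lemma: uRtoF} and Proposition~\ref{propn: roots of Fock} do the work, and to use Arveson's identification $\EI\cong\Fock^{\kil(\E)}$ as the bridge. I would isolate part~(b) as the heart of the matter: since a root of $(\EI,u)$ is automatically a root of $(\E,u)$, one has $R^{\EI}_u\subseteq\RootEu$ for free, so (b) says precisely that every root of $(\E,u)$ is already valued in the type~I part. Assuming (b), write $\kil:=\kil(\E)$, fix an isomorphism $\EI\cong\Fock^{\kil}$, note that it takes $u$ to some normalised unit $v$ of $\Fock^{\kil}$, and invoke the Fock--Weyl remark after Definition~\ref{defn: pointed} to get $(\Fock^{\kil},v)\cong(\Fock^{\kil},\Vack)$; composition gives a pointed isomorphism $\psi:(\EI,u)\to(\Fock^{\kil},\Vack)$. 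Such a $\psi$ restricts to an isometric linear bijection $R^{\EI}_u\to\RootFkVac$ (the inner product is $\ip{a}{b}=\ip{a_1}{b_1}$ in either space and $\psi_1$ is unitary) and it carries product subsystems to product subsystems, respecting ``generated by''.

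For (b), take $a\in\RootEu$; we may assume $a\neq0$, so by the argument in the proof of Proposition~\ref{propn: AEu} we have $a_t\neq0$ and $a_t\perp u_t$ for all $t>0$. The key step is to build a small inclusion subsystem out of $u$ and $a$: put $D_t:=\Linbar\{u_t,a_t\}\subseteq\E_t$, and observe that the relations $u_{s+t}=u_s\bfcdot u_t$ and $a_{s+t}=a_s\bfcdot u_t+u_s\bfcdot a_t$ give $\beta^E_{s,t}(D_{s+t})\subseteq D_s\ot D_t$, so $D=(D_t)_{t>0}$ is an inclusion subsystem of $\E$. I would then map $D$ into the index-one Fock inclusion system $F^\Comp$ by $\phi_t u_t=\Vac^\Comp_t$, $\phi_t a_t=\chi^{\norm{a_1}}_t$ (Guichardet notation, with $\Comp$ in place of $\kil$), extended linearly: using $u_t\perp a_t$, $\Vac^\Comp_t\perp\chi^{\norm{a_1}}_t$ and $\norm{a_t}=\sqrt{t}\,\norm{a_1}=\norm{\chi^{\norm{a_1}}_t}$, each $\phi_t$ is isometric, and a direct check against the explicit structure maps of $F^\Comp$ recorded in the Fock inclusion example shows $\phi$ to be a strong morphism of inclusion systems. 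By the lifting of morphisms of inclusion systems to their generated product systems (the remark following Theorem~\ref{thm: 2.13}) and Corollary~\ref{cor: Fk gen}, $\phi$ lifts to an isometric morphism $\mathcal{D}\to\Fock^\Comp$, where $\mathcal{D}$ is the product system generated by $D$; its range is a product subsystem of $\Fock^\Comp$ containing $\Vac^\Comp$ and---being a subspace in each fibre---all the roots $\chi^c$, hence is all of $\Fock^\Comp$ by Lemma~\ref{lemma: uRtoF}. Thus $\mathcal{D}\cong\Fock^\Comp$, which is of type~I, so $\mathcal{D}$ is generated by its units. Realising $\mathcal{D}$ as a product subsystem of $\E$ (the product system generated by an inclusion subsystem of $\E$ embeds canonically as a product subsystem of $\E$), those units become units of $\E$, so $\mathcal{D}\subseteq\EI$; and since the canonical realisation restricts on $D_t$ to the inclusion $D_t\hookrightarrow\E_t$ and $a$ is a section of $D$, this yields $a_t\in\EI_t$ for all $t$. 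Hence $\RootEu=R^{\EI}_u$, which is (b).

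Parts (a) and (c) then drop out. For (a): by (b), $\F$ is the product subsystem of $\EI$ generated by $u$ and $R^{\EI}_u$, so $\psi(\F)$ is the product subsystem of $\Fock^{\kil}$ generated by $\Vack$ and all its roots, which is $\Fock^{\kil}$ by Lemma~\ref{lemma: uRtoF}; therefore $\F=\psi^{-1}(\Fock^{\kil})=\EI$. For (c): chaining the equality $\RootEu=R^{\EI}_u$ with the isometric isomorphisms $R^{\EI}_u\cong\RootFkVac$ (restriction of $\psi$) and $\RootFkVac\cong\kil$ (Proposition~\ref{propn: roots of Fock}), and noting that $\EI$ and $\E$ have the same units, so that $\kil(\EI)=\kil$, gives $\dim\RootEu=\dim\kil=\ind\E$.

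The hard part is (b). A root only satisfies an additive factorisation relation inside the ambient $\E$, which may be vastly larger than $\EI$ and carry no visible trace of the units, so a priori there is nothing stopping a root from sticking out of the type~I part. The manoeuvre that forces it back in is the passage to the two-dimensional inclusion subsystem $D=\Linbar\{u,a\}$ and the identification of the product system it generates with a copy of the index-one Fock system; the one genuine computation there is the verification that $u\mapsto\Vac$, $a\mapsto\chi^{\norm{a_1}}$ intertwines the structure maps of $D$ and of $F^\Comp$.
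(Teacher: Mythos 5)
Your proof is correct, but it is organised differently from the paper's. The paper proves (a) first: the inclusion subsystem generated by $u$ together with \emph{all} of its roots is identified with the Fock inclusion system $F^{\hil}$, $\hil=\RootEu$, via Proposition~\ref{propn: roots of Fock}, so the generated product system $\F$ is isomorphic to $\Fockh$ and hence sits inside $\EI$; the reverse inclusion is obtained by re-running the exponential-limit approximation from Lemma~\ref{lemma: uRtoF} inside $\E$, applied to the root $\psi(\chi^c)$ for a pointed isomorphism $\psi\colon(\Fockk,\Vack)\to(\EI,u)$, and (b), (c) then follow. You make (b) primary and prove it one root at a time: the two-dimensional inclusion subsystem $D_t=\Linbar\{u_t,a_t\}$, the strong isometric morphism $u\mapsto\Vac^\Comp$, $a\mapsto\chi^{\norm{a_1}}$ into $F^\Comp$ (your checks against $\betaFkst$ and the identity $\norm{a_t}^2=t\norm{a_1}^2$ are exactly the right ones), and the lifted isometric morphism whose range is a product subsystem of $\Fock^\Comp$ containing $\Vac^\Comp$ and every $\chi^c$, hence all of $\Fock^\Comp$ by Lemma~\ref{lemma: uRtoF}; so the product subsystem of $\E$ generated by $D$ is of type I and contains $a$, giving $\RootEu=R^{\EI}_u$. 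Your (a) then avoids the paper's explicit limit computation by transporting the generation statement through $\psi$, using that a pointed isomorphism carries the root set onto the root set (this step genuinely needs your (b), so the logical order is consistent). The ingredients are the same (Proposition~\ref{propn: roots of Fock}, Lemma~\ref{lemma: uRtoF}, the lifting theory around Theorem~\ref{thm: 2.13}, the Fock--Weyl transitivity remark), and what your order buys is a self-contained proof of (b) with no need for the Hilbert-space structure of the whole root space until (c). Two points should be made explicit in a written-up version: first, Remark (iv) after Theorem~\ref{thm: 2.13} only gives $\phi_t=(\jmath^\Comp_t)^*\wh{\phi}_t\,\imath^D_t$, so to see that the range of $\wh{\phi}_t$ actually contains $\Vac^\Comp_t$ and $\chi^{\norm{a_1}}_t$ you should add the one-line norm argument (isometry of $\wh{\phi}_t$ together with equality of the norm of the vector and the norm of its compression to $F^\Comp_t$); second, as in the paper's own proof, Theorems~\ref{thm: always} and~\ref{thm: 7.16} must be invoked so that the generated subsystem, realised inside $\E$, is an Arveson subsystem and its identification with $\Fock^\Comp$ is measurable --- this is what makes its units measurable sections of $\E$, hence units of $\E$, and so forces the subsystem (and with it $a$) into $\EI$.
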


\begin{proof}
Let $\kil$ and $\hil$ be the Hilbert spaces $\kil(\E)$ and $\RootEu$
respectively,
and let $F$ be the inclusion subsystem of $\E$ generated by $u$ and all of its roots.
Thus $\dim \kil = \ind \E$,
$\Fockk \cong \E^I$
and $\F$ is the product subsystem of $\E$ generated by $F$.
Recall that, by Theorem~\ref{thm: always}, $\F$ is an Arveson subsystem of $\E$.

(a)
We first show that $\F$ is a product subsystem of $\EI$.
By Proposition~\ref{propn: roots of Fock},
the following prescription defines
unitary operators
\[
A_t:
F^\hil_t \to F_t,
\quad
\lambda \Vac^\hil_t + \chi^a_t \mapsto \lambda u_t + a_t
\qquad
( \lambda \in \Comp, a \in \hil = \RootEu, t>0),
\]
and
it is easily seen that $A = ( A_t )_{t>0}$
is an isomorphism of inclusion systems.
By Corollary~\ref{cor: Fk gen},
the product system generated by $F^\hil$ is $\F^\hil$.
By
Remark (iv) after Theorem~\ref{thm: 2.13},
$A$ lifts to an isomorphism of product systems $\wh{A}: \Fock^\hil \to \F$.
Theorem~\ref{thm: 7.16},
together with the remark following it,
imply that $\wh{A}$ is an isomorphism of Arveson systems,
and so $\F$ is of type I.
It follows
that $\F$ is a product subsystem of $\EI$.

We next show that $\EI$ is a product subsystem of $\F$,
equivalently that,
for any normalised unit $v$ of $\E$,
$v_t \in \F_t$ ($t>0$).
To this end,
let $v \in \UnitEnorm$ and fix an isomorphism of pointed Arveson systems
$\psi: ( \Fockk, \Vack ) \to ( \EI, u )$.
Then
$( e^{ixt} v_t )_{t>0} = \psi (\vp^c )$
for some $c \in \kil$ and $x \in \Real$.
Set
$a := \psi (\chi^c) \in R^{\EI}_u$.
Since any root of $(\EI, u)$ is
 a root of $(\E, u)$,
$a_s \in F_s$ ($s>0$) and
\[
\psi_t( \ve^c_t ) =
\lim_{n \to \infty}
\psi_t \big( \Vac^\kil_{2^{-n} t} + \chi^c_{2^{-n} t}\big)^{ \bfcdot \, 2^n } =
\lim_{n \to \infty}
 \big( u_{2^{-n} t} + a_{2^{-n} t}\big)^{ \bfcdot \, 2^n }
 \in \Ft
\]
so
$
v_t =
e^{-ixt} e^{-t \norm{c}^2/2 } \, \psi_t (\ve^c_t ) \in \F_t
$
($t>0$),
as required.

Therefore $\F = \EI$, so (a) holds.
(b) follows from (a).

(c)
By (a) we have isomorphisms of Arveson systems
\[
\Fockh \cong \F = \EI \cong \Fockk.
\]
This implies that $\hil \cong \kil$, and so
$
\ind \E = \dim \kil = \dim \hil = \dim \RootEu
$.
\end{proof}

%%%%%%%%%%%%%%%%%%%%%%%%%%%%%%%%%%%%%%%%%%%%% New section

\section{Addits of pointed inclusion systems}
%\label{Product-inclusion}
\label{section: addits inclusion}

In this section we extend notions of the previous section to inclusion systems,
and show that, as with units,
addits of an inclusion system lift to addits of the generated product system.

We call an ordered pair $(E,u)$,
consisting of an inclusion system $E$ and a normalised unit $u$ of $E$,
a \emph{pointed inclusion system}.

\begin{definition}
(\cite{BhM-inclusion})
Let $(E,u)$ be a pointed inclusion system.
An \emph{addit of}  $(E,u)$ is a section $a$ of $E$
satisfying the additivity condition
\[
a_{s+t} = ( \betaEst )^* ( a_s \ot u_t + u_s \ot a_t )
\qquad
(s,t>0),
\]
and the following boundedness condition: there is $k \in \Rplus$ such that
\[
\norm{a_t}^2 \les k ( t + t^2 )
\qquad
(t>0).
\]
An addit $a$ of $(E,u)$ is a \emph{root} if it satisfies
\[
a_t \perp u_t
\qquad
(t>0).
\]
\end{definition}

We now establish the additive counterpart to
(the second part of)
Theorem~\ref{thm: 2.13},
whose notations we continue to adopt.

\begin{proposition}
\label{propn: 3.12}
Let $(E,u)$ be a pointed inclusion system.
Let $\E$ be the product system generated by $E$,
and let $\uhat$ be the lift of $u$.
Then the following hold.
\begin{alist}
\item
The map
$( \iE )^* : \big( ( \iE_t )^*: \Et \to E_t \big)_{t>0}$
restricts to a bijection from the set of addits of $(\E, \uhat)$
to the set of addits of $(E,u)$,
whose inverse is denoted by $a \mapsto \ahat$.
\item
If $a$ is a root of $(E,u)$ then $\ahat$ is a root of $(\E, \uhat)$.
\end{alist}
\end{proposition}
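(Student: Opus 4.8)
The plan is to mimic the proof of Theorem~\ref{thm: 2.13} for units, adapting it to the linearised (additive) setting, exploiting the inductive limit description of $\E$ and the already-proven bijection $u \mapsto \uhat$ for units. For part (a), I first need to check that $(\iE)^*$ sends addits of $(\E,\uhat)$ to addits of $(E,u)$. Given an addit $a$ of $(\E,\uhat)$, I set $b_t := (\iE_t)^* a_t$. Using that $\iE = (\iE_t)_{t>0}$ is a strong isometric morphism of inclusion systems (Theorem~\ref{thm: 2.13}) and that $\iE_t \uhat_t = $ the image of $\uhat_t$ (so $(\iE_t)^*\uhat_t = u_t$), I would compute $(\betaEst)^*(b_s \ot u_t + u_s \ot b_t)$ by inserting $\iE$'s and using the strong-morphism intertwining relation $\betaEhat{s,t}\,\iE_{s+t} = (\iE_s \ot \iE_t)\betaEst$ together with the addit relation for $a$ at level $\E$. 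The boundedness condition $\norm{b_t}^2 \les k(t+t^2)$ should follow from the remarks in Section~\ref{section: addits Arveson}: for the lifted normalised unit $\uhat$, an addit $a$ of $(\E,\uhat)$ has $\norm{a_t}^2 = \norm{\theta_t a_1}^2 \les \max\{t,\sqrt t\}^2\norm{a_1}^2 = \max\{t^2,t\}\norm{a_1}^2 \les \norm{a_1}^2(t+t^2)$, and $(\iE_t)^*$ is a contraction, so $\norm{b_t} \les \norm{a_t}$.

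For the reverse direction — constructing $\ahat$ from an addit $a$ of $(E,u)$ — I would use the inductive limit structure directly. For fixed $T>0$ and ${\bf t} = (t_1,\dots,t_n) \in J_T^{(n)}$, define
\[
a^{(\bf t)} := \sum_{i=1}^n \uhat_{t_1} \ot \cdots \ot \uhat_{t_{i-1}} \ot a_{t_i} \ot \uhat_{t_{i+1}} \ot \cdots \ot \uhat_{t_n} \in E_{\bf t},
\]
a "Leibniz" vector. The key computation is that these are compatible under the connecting isometries $\betaEbfst$: applying $\betaEbfst$ to $a^{(\bf s)}$ for ${\bf s} \les {\bf t}$, one uses the addit relation for $a$ and the fact that $u$ is a unit of $E$ (so $\betaEbfst$ sends the relevant tensor strings of $u$'s correctly, via Theorem~\ref{thm: 2.13}'s description of $\uhat$). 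One then checks $\betaEbfst\, a^{(\bf s)} = a^{(\bf t)}$, at least up to the corrections already built into the definition; more precisely the relation to establish is $\iEbft\, a^{(\bf t)}$ is independent of ${\bf t}$, so it defines a vector $\ahat_T \in \ET$. Measurability of the section $\ahat$ and the factorisation relation $\ahat_{s+t} = \ahat_s \bfcdot \uhat_t + \uhat_s \bfcdot \ahat_t$ then follow from the corresponding facts one step down (take ${\bf t} = (s,t)$ and use the definition of $\BEST$ via $\iEbfr \ot \iEbfs = \BERS \circ \iEbft$). Finally $(\iE_{t_1})^* \circ$ the map picking the $(T)$-component recovers $a_T$, giving that the two constructions are mutually inverse.

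Part (b) is then easy: if $a_t \perp u_t$ in $E_t$ for all $t$, then in the formula for $a^{(\bf t)}$ each summand has one tensor leg equal to $a_{t_i}$ and the $\uhat$-legs are normalised unit vectors, so $\ip{\uhat_{\bf t}}{a^{(\bf t)}} = \sum_i \ip{u_{t_i}}{a_{t_i}} \prod_{j \neq i}\norm{u_{t_j}}^2 = 0$; passing to the inductive limit (where $\uhat_T = \lim \iEbft \uhat_{\bf t}$ and $\ahat_T = \iEbft a^{(\bf t)}$ for all ${\bf t}$) gives $\ip{\uhat_T}{\ahat_T} = 0$. The main obstacle I anticipate is the bookkeeping in showing $\betaEbfst\, a^{(\bf s)} = a^{(\bf t)}$ (equivalently that $\iEbft a^{(\bf t)}$ is ${\bf t}$-independent): one must carefully track how a single "derivative slot" at $s_i$ splits when $s_i$ is subdivided, using the addit relation for $a$ repeatedly and the multiplicativity of $u$, and verify the two genuinely reduce to the same sum over all finest-level slots. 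The $n=2$, ${\bf s} = (s,t)$ case is exactly the addit relation and the pattern iterates, so this should go through but needs care with indices.
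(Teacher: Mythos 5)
Your first half (compression by $(\iE)^*$ sends addits of $(\E,\uhat)$ to addits of $(E,u)$, with the estimate $\norm{(\iE_t)^*a_t}\les\norm{a_t}$) matches the paper, but your lifting construction has a genuine gap at exactly the point you flag. The compatibility claim $\betaEbfst\, a^{({\bf s})} = a^{({\bf t})}$ (equivalently, that $\iEbft a^{({\bf t})}$ is independent of ${\bf t}$) is false in general: in an inclusion system the unit and addit relations are only the \emph{compressed} ones, $u_{s+t}=(\betaEst)^*(u_s\ot u_t)$ and $a_{s+t}=(\betaEst)^*(a_s\ot u_t+u_s\ot a_t)$; there is no claim that $\betaEst$ carries $u_{s+t}$, resp.\ $a_{s+t}$, onto the corresponding tensors (that would be the ``strong'' versions, and the defect $\Ran\betaEst\neq E_s\ot E_t$ is precisely what distinguishes an inclusion system from a product system). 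What does hold is only the adjoint relation $(\betaEbfst)^*a^{({\bf t})}=a^{({\bf s})}$, so the vectors $\ibft a^{({\bf t})}$ form a net satisfying $\ibfs\ibfs^*\big(\ibft a^{({\bf t})}\big)=\ibfs a^{({\bf s})}$ rather than a constant family, and the real content of the paper's proof is to show that this net \emph{converges}. That is where the boundedness condition $\norm{a_t}^2\les k(t+t^2)$ in the definition of an addit of an inclusion system enters --- it never appears in your construction, which is a warning sign --- and it is also why the paper first splits off the trivial part and lifts only roots: for a root the summands of $a^{({\bf t})}$ are mutually orthogonal, giving $\norm{a^{({\bf t})}}^2=\sum_j\norm{a_{t_j}}^2\les k(T+T^2)$ uniformly over $J_T$, whereas the triangle-inequality bound available for a general addit grows like $\sqrt{n}$ under refinement and is useless. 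One then shows the bounded net is weakly Cauchy and, via the projection relations above, norm convergent; the addit identity for $\ahat$ and the equality $\imath_T^*\ahat_T=a_T$ are obtained by passing to the limit, not by an exact identity at each finite stage.

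A second, smaller omission: you only verify that compressing your lift returns $a$; for bijectivity you must also show that an addit $b$ of $(\E,\uhat)$ is determined by $(\iE)^*(b)$ (equivalently, lift-then-restrict in the other order is the identity). The paper obtains this from the formula $\ibft^*b_T=\sum_j u_{t_1}\ot\cdots\ot\imath_{t_j}^*b_{t_j}\ot\cdots\ot u_{t_n}$ together with the strong convergence $\ibft\ibft^*\to\IET$; some such argument is needed and is absent from your sketch. Part (b) is fine in spirit, but as written it leans on the false constancy of $\iEbft a^{({\bf t})}$; it should instead be phrased as $\ip{\ibft a^{({\bf t})}}{\ibft\ubft}=\ip{a^{({\bf t})}}{\ubft}=0$ followed by taking limits along $J_T$, using that $\ibft\ubft\to\uhat_T$ and $\ibft a^{({\bf t})}\to\ahat_T$.
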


\begin{proof}
Let us drop the superscripts on
$\betaE$, $\BE$ and $\iE$.

(a)
First let $b$ be an addit of $(\E, \uhat)$.
Then $\imath^*(b)$ is an addit of $(E,u)$ since
\begin{align*}
\betast^* \big(
\is^* b_s \ot u_t + u_s \ot \imt^* b_t
\big)
&=
\big( (\is \ot \imt ) \circ \betast \big)^*
\big( b_s \ot \uhat_t + \uhat_s \ot b_t \big)
\\
&=
\big( \Bst \circ \isplust \big)^*
\big( b_s \ot \uhat_t + \uhat_s \ot b_t \big)
=
\isplust^* b_{s+t}
\qquad
(s,t>0).
\end{align*}
Let
$\alpha$ denote the resulting map
from addits of  $(\E, \uhat)$
to addits of $(E,u)$.
Suppose that addits $b^1$ and $b^2$ of $(\E, \uhat)$
satisfy
$\alpha (b^1) = \alpha (b^2)$.
Fix $T > 0$.
An induction on $n$
confirms that,
for any addit $b$ of $(\E, \uhat)$,
\begin{align*}
&
\BTbft \, b_T =
\sum_{j=1}^n
\uhat_{t_1} \ot \cdots \ot \uhat_{t_{j-1}}
\ot b_{t_j} \ot
 \uhat_{t_{j+1}} \ot \cdots \ot \uhat_{t_n},
 \ \text{ and}
 \\
&
\BTbft \circ \ibft =
\imath_{t_1} \ot \cdots \ot \imath_{t_n}
\qquad
( n \in \Nat, {\bf t} \in J^{(n)}_T ).
\end{align*}
Therefore,
for any addit $b$ of $(\E, \uhat)$,
\begin{align*}
\ibft^* b_T
&=
\big( ( \imath_{t_1}^* \ot \cdots \ot \imath_{t_n}^* ) \circ  \BTbft \big) b_T
\\
&=
\sum_{j=1}^n
u_{t_1} \ot \cdots \ot u_{t_{j-1}}
\ot \imath_{t_j}^* b_{t_j} \ot
 u_{t_{j+1}} \ot \cdots \ot u_{t_n}
\qquad
( n \in \Nat, {\bf t} \in J^{(n)}_T ).
\end{align*}
Now
the RHS is the same for $b=b^1$ and $b=b^2$,
therefore
$\ibft^* b^1_T = \ibft^* b^2_T$
(${\bf t} \in J_T$).
Since the net
$\big( \ibft \ibft^* \big)_{{\bf t} \in J_T}$
converges strongly to $\IET$,
it follows that $b^1_T = b^2_T$.
Unfixing $T$ we conclude that $b^1 = b^2$,
and so $\alpha$ is injective.

Since the trivial addits of $(\E, \uhat)$
are clearly mapped by $\alpha$ onto the trivial addits of $(E,u)$,
in order to establish the surjectivity of $\alpha$
it suffices to fix a root $a$ of  $(E,u)$ and
find a root, $\ahat$ say, of $(\E, \uhat)$ such that
$\imath^*( \ahat ) = a$.
Accordingly,
let $a$ be a root of $(E,u)$, with boundedness constant $k$ and fix $T>0$.

\emph{Claim 1}.
Setting
$\abft :=
\sum_{j=1}^n
u_{t_1} \ot \cdots \ot u_{t_{j-1}}
\ot a_{t_j} \ot
 u_{t_{j+1}} \ot \cdots \ot u_{t_n}$
($ n \in \Nat, {\bf t} \in J^{(n)}_T $),
the net
$\big( \ibft \abft \big)_{{\bf t} \in J_T}$
converges.

First note that the net is bounded
since
$a_t \perp u_t$ ($t>0$),
so
\[
\norm{ \ibft \abft }^2 =
\norm{ \abft }^2 =
\sum_{j=1}^n \norm{ a_{t_j} }^2 \les
k \sum_{j=1}^n ( t_j + t_j^2 ) \les k ( T + T^2 )
\qquad
( n \in \Nat, {\bf t} \in J^{(n)}_T ).
\]
Next note the identity
\[
\ibfs^* \ibft \abft = \betabfst^* \abft = \abfs
\qquad
( {\bf s} \les {\bf t} \text{ in } J_T ).
\]
Fix
$x \in \ET$ and $\ve > 0$.
Choose
${\bf r} \in  J_T$
such that
$\norm{ x - \ibfr \ibfr^* x } < \ve$.
Then,
for ${\bf t} \ges {\bf r}$,
\[
| \ip{ \ibft \abft - \ibfr \abfr}{x} |^2
=
| \ip{ \ibft \abft }{ ( I - \ibfr \ibfr^* ) x } |^2
\les
k ( T + T^2 ) \ve^2.
\]
It follows that
$\big( \ibft \abft \big)_{{\bf t} \in J_T}$
is weakly Cauchy.
Set
$\ahat_T := \weaklim_{{\bf t} \in J_T} \ibft \abft$.
Now
\[
\ibfs \ibfs^* \ibft \abft = \ibfs \abfs
\qquad
({\bf s} \les {\bf t} \text{ in } J_T),
\]
therefore
$\ibfs \ibfs^* \ahat_T = \ibfs \abfs$ (${\bf s} \in J_T$).
It follows that
$\ibfs \abfs \to \ahat_T$ (in norm), as claimed.

\emph{Claim 2}.
Setting
$\ahat := ( \ahat_T )_{T>0}$,
$\ahat$
is an addit of $(\E, \uhat )$ such that $\imath^* ( \ahat ) = a$.

Let $S,T>0$.
Write $\ubft$ for
$\sum_{j=1}^n u_{t_1} \ot \cdots \ot u_{t_n}$
($ n \in \Nat, {\bf t} \in J^{(n)}_T $).
Then,
for ${\bf s} \in  J_S$ and ${\bf t} \in  J_T$,
\begin{align*}
( \ibfs \ot \ibft ) ( \abfs \ot \ubft + \ubfs \ot \abft )
&=
( \ibfs \ot \ibft ) a_{ {\bf s} \smile {\bf t} }
=
\BST \, \imath_{ {\bf s} \smile {\bf t} } a_{ {\bf s} \smile {\bf t} }.
\end{align*}
Taking limits and using the fact that the net $( \ibfr \ubfr )_{ {\bf r} \in  J_R }$
converges to $\uhat_R$ ($R>0$),
we see that
\[
\ahat_S \ot \uhat_T + \uhat_S \ot \ahat_T = \BST \ahat_{S+T}.
\]
Thus
$\ahat$ is an addit of $\uhat$.
Now,
since
\[
\imath_T^* \ibft \abft = \betaTbft^* \abft = a_T
\qquad
( {\bf t} \in  J_T ),
\]
it follows that $\imath_T^* \ahat_T = a_T$,
and Claim 2 is established.

Therefore $\alpha$ is also surjective and so (a) follows.

(b)
Let $a$ be a root of $(E,u)$.
Then
\[
\ip{ \ibft \abft  }{ \ibft  \ubft  }
=
\ip{ \abft  }{ \ubft  }
=
0
\qquad
( T>0, {\bf t} \in  J_T ).
\]
Taking limits we see that
$\ip{ \ahat_T }{ \uhat_T } = 0$ ($ T>0 $),
so $\ahat$ is a root of $\uhat$.
The proof is now complete.
\end{proof}

%%%%%%%%%%%%%%%%%%%%%%%%%%%%%%%%%%%%%%%%%%%%% New section

\section{Amalgamation}
%\label{Independent}
\label{section: amalgamated products}

The amalgamation of Arveson systems, via a contractive morphism,
was introduced in~\cite{BhM-inclusion}.
This generalised a construction of Skeide
which corresponds to the case where
the morphism is given by Dirac dyads from normalised units (\cite{Ske-index}).
A formula for its index,
in terms of that of the constituent systems, was given in~\cite{Muk-index}.
In this section we first show how the root space
of an amalgamated product of pointed Arveson systems
(defined to be that given by the corresponding morphism of Dirac dyads)
 may be expressed
in terms of the root spaces of its constituent systems,
when the morphism is partially isometric.
The amalgamated product of pointed Arveson systems
 may be realised as a product subsystem of the tensor product Arveson system
(\cite{Muk-index}, Theorem 2.7);
we give an explicit formula for the subsystem which shows,
in particular, that it is independent of the fixed normalised units
and so depends only on the underlying Arveson systems.
The latter fact may alternatively be proved using random sets
(\cite{Lie-spatial}), or directly (\cite{BLMS-intrinsic});
see also~\cite{BLS-Powers}.
The section ends with a new formula for the space of roots of
the tensor product of two pointed Arveson systems.

To begin we quote a basic result.

 \begin{theorem}
 [\cite{BhM-inclusion}, Section 3;~\cite{Muk-index}, Theorem 2.7]
 %\label{universal}
 \label{thm: 2.14}
 Let $C: \Etwo \to \Eone$ be a contractive morphism between Arveson systems.
 Then there is a triple $(\E, J^1, J^2)$,
 unique up to isomorphism,
 consisting of a product system $\E$
 and isometric morphisms of product systems $J^i: \E^i \to \E$ $(i=1,2)$
 such that
 \begin{rlist}
 \item
 $( J^1_t )^* J^2_t = C_t$ $(t>0)$, and
 \item
 $\E = J^1 ( \Eone ) \bigvee J^2 ( \Etwo )$.
 \end{rlist}
 Notation\tu{:} $\Eone \ot_C \Etwo$.
Terminology\tu{:} the \emph{amalgamated product of $\Eone$ and $\Etwo$ via $C$}.

Conversely,
let $\Eone$ and $\Etwo$ be product subsystems of an Arveson system $\F$
with
inclusion morphisms $J^i: \E^i \to \F$ $(i=1,2)$.
Then
$\Eone \bigvee \Etwo = \Eone \ot_C \Etwo$
where
$C = \big( ( J^1_t )^* J^2_t \big)_{t>0}$.
 \end{theorem}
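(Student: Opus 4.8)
The statement has two halves. For the first (existence and uniqueness of the amalgamated product), the natural route is to realise $\E$ concretely as a product subsystem of a tensor product and then check the universal characterisation; but since the excerpt cites \cite{BhM-inclusion} and \cite{Muk-index} for this, I would spend most effort on the converse, which is the genuinely new content here. So I would first recall (or quote) the forward construction just far enough to fix notation, and then prove the converse by verifying that the triple $(\Eone \bigvee \Etwo, J^1, J^2)$ — with $J^i$ the inclusion morphisms of $\E^i$ into $\F$ — satisfies conditions (i) and (ii) of the amalgamated-product characterisation for the morphism $C = \big( (J^1_t)^* J^2_t \big)_{t>0}$, and then invoke the uniqueness-up-to-isomorphism clause to conclude $\Eone \bigvee \Etwo = \Eone \ot_C \Etwo$.

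Concretely, the steps are as follows. First I would check that $C = \big( (J^1_t)^* J^2_t \big)_{t>0}$ is genuinely a contractive morphism of Arveson systems: each $C_t$ is a contraction since $J^1_t, J^2_t$ are isometries, and the morphism property
\[
\BEonest \, C_{s+t} = (C_s \ot C_t) \, \BEtwost
\]
follows by sandwiching the corresponding intertwining relations for $J^1$ and $J^2$ (using $(J^1_t)^* J^1_t = I$) and noting that the ranges of $J^i$ lie inside the product subsystems $\E^i$ of $\F$, so the structure maps match up; measurability of $(C_t)$ is automatic from Theorem~\ref{thm: always} and Theorem~\ref{thm: 7.16} applied to the inclusion morphisms. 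Condition (i), $(J^1_t)^* J^2_t = C_t$, then holds by definition. Condition (ii) is precisely the definition of $\Eone \bigvee \Etwo$ as the smallest product subsystem of $\F$ containing $J^1(\Eone)$ and $J^2(\Etwo)$ — here one uses Theorem~\ref{thm: always} to know that $\Eone \bigvee \Etwo$ is itself an Arveson subsystem, and that the $J^i$, regarded as maps into $\Eone \bigvee \Etwo$, are still isometric morphisms of Arveson systems. Finally, the uniqueness clause of the first half (the triple $(\E, J^1, J^2)$ satisfying (i) and (ii) is unique up to isomorphism) forces an isomorphism of product systems $\Eone \ot_C \Etwo \to \Eone \bigvee \Etwo$ intertwining the respective structure morphisms, which is exactly the asserted identification.

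**The main obstacle.** The delicate point is not any single calculation but making sure the bookkeeping between ``abstract'' and ``concrete'' realisations is airtight: the $J^i$ in the forward construction are morphisms into an abstractly constructed $\E$, whereas in the converse they are literal inclusions $\E^i \hookrightarrow \F$, and one must be careful that ``$\E = J^1(\Eone) \bigvee J^2(\Etwo)$'' in the abstract sense corresponds, under the uniqueness isomorphism, to the honest join of subsystems inside $\F$. A second, milder subtlety is the measurability of $C$: one cannot simply assert it, but must route through Theorem~\ref{thm: 7.16} (isomorphisms of Arveson systems are automatically measurable) or the automatic-measurability result Theorem~\ref{thm: always}, since the fibrewise definition of $C_t$ only gives an algebraic morphism a priori. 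Beyond these points the proof is essentially a matter of unwinding definitions, so I would keep it brief and lean heavily on the cited results and on Theorem~\ref{thm: always}.
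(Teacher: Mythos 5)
Your proposal is sound and matches the paper's treatment: the paper states Theorem~\ref{thm: 2.14} without proof, citing \cite{BhM-inclusion} and \cite{Muk-index}, and the remark immediately following it records exactly your realisation of the converse, namely that when $\Eone$ and $\Etwo$ are subsystems of $\F$, the amalgamated product $\Eone \ot_C \Etwo$ may be realised as the product subsystem of $\F$ generated by the inclusion subsystem $(\Eonet \vee \Etwot)_{t>0}$, that is $\Eone \bigvee \Etwo$, an Arveson system by Theorem~\ref{thm: always}; your route of verifying (i) and (ii) for the concrete triple $(\Eone \bigvee \Etwo, J^1, J^2)$ and then invoking the uniqueness clause is the intended argument, and your sandwiching computation showing that $C$ is a morphism is correct. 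One small slip: Theorem~\ref{thm: 7.16} concerns isomorphisms, so it does not by itself give measurability of $C = \big( (J^1_t)^* J^2_t \big)_{t>0}$; that follows instead from the Parseval-type matrix-element computation used in the proof of Theorem~\ref{thm: always} (and is in any case not essential to the uniqueness argument, which only involves product-system morphisms, each $C_t$ being automatically contractive).
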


\begin{remarks}
The construction of $\Eone \ot_C \Etwo$ is via an inclusion system.
In case
$\Eone$ and $\Etwo$ are product subsystems of an Arveson system $\F$,
$\Eone \ot_C \Etwo$ may be realised as
the product subsystem  of $\F$
generated by
the inclusion subsystem $( \Eonet \vee \Etwot )_{t>0}$,
in particular it is an Arveson system,
by Theorem~\ref{thm: always}).

When $C$ takes the form
$\big( \dyad{u^1_t }{ u^2_t } \big)_{t>0}$
for normalised units $u^i$ of $\E^i$ ($i=1,2$),
the case treated in~\cite{Ske-index},
$\Eone \ot_C \Etwo$ is denoted $\Eone \ot_{u^1,u^2} \Etwo$.
\end{remarks}

The following proposition is
a straight-forward consequence of Theorem~\ref{thm: 2.14}.

\begin{proposition}
\label{propn: 4.1}
Let $(\E,u)$ and $(\F,v)$ be pointed Arveson systems.
Then
\[
\E \ot_{u,v} \F \cong ( \E \ot v ) \bigvee ( u \ot \F ).
\]
\end{proposition}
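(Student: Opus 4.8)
The plan is to apply the converse half of Theorem~\ref{thm: 2.14} to the Arveson system $\F' := \E \ot \F$ together with its two product subsystems $\E \ot v$ and $u \ot \F$. First I would record that $\E \ot v := ( \E_t \ot v_t )_{t>0}$ is a product subsystem of $\E \ot \F$: the structure maps of $\E \ot \F$ are the tensor products $B^\E_{s,t} \ot B^\F_{s,t}$ (up to the canonical middle-factor flip identifying $(\E_s \ot \F_s) \ot (\E_t \ot \F_t)$ with $(\E_s \ot \E_t) \ot (\F_s \ot \F_t)$), and under this map $\E_{s+t} \ot v_{s+t}$ goes to $(\E_s \ot \E_t) \ot (v_s \ot v_t)$, i.e.\ to $(\E_s \ot v_s) \ot (\E_t \ot v_t)$, using $B^\F_{s,t} v_{s+t} = v_s \ot v_t$ since $v$ is a unit (recall our $B$-conventions are the adjoints of the usual $W$'s). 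The same computation, with the roles of the two factors swapped, shows $u \ot \F$ is a product subsystem.

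Next I would identify the two inclusion morphisms. Write $J^1 : \E \ot v \hookrightarrow \E \ot \F$ and $J^2 : u \ot \F \hookrightarrow \E \ot \F$ for the inclusions. The converse part of Theorem~\ref{thm: 2.14} then gives
\[
( \E \ot v ) \bigvee ( u \ot \F ) = ( \E \ot v ) \ot_C ( u \ot \F )
\qquad \text{where} \qquad
C = \big( ( J^1_t )^* J^2_t \big)_{t>0}.
\]
So the remaining task is to compute $C_t : u_t \ot \F_t \to \E_t \ot v_t$ and check it is exactly the Dirac-dyad morphism defining $\ot_{u,v}$. Identifying $\E_t \ot v_t$ with $\E_t$ and $u_t \ot \F_t$ with $\F_t$ in the obvious way, $( J^1_t )^*$ is the orthogonal projection of $\E_t \ot \F_t$ onto $\E_t \ot v_t$ followed by this identification, namely $\xi \ot \eta \mapsto \ip{v_t}{\eta}\, \xi$; applying this to a vector $u_t \ot \eta$ in the range of $J^2_t$ yields $\ip{v_t}{\eta}\, u_t$. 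Hence, under the identifications, $C_t = \dyad{u_t}{v_t}$, which is precisely the morphism of Dirac dyads of the normalised reference units. By the definition of $\ot_{u,v}$ recorded in the remarks after Theorem~\ref{thm: 2.14}, $( \E \ot v ) \ot_C ( u \ot \F ) = \E \ot_{u,v} \F$, and combining with the displayed equality gives the claimed isomorphism.

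I would also note that one should invoke Theorem~\ref{thm: always} (via the remarks after Theorem~\ref{thm: 2.14}) to know that $( \E \ot v ) \bigvee ( u \ot \F )$ is itself an Arveson subsystem, so that the statement is an isomorphism of Arveson systems and not merely of algebraic product systems; this is immediate since it is a product subsystem of the Arveson system $\E \ot \F$. The only genuinely fiddly point is the bookkeeping of the flip identification in the structure maps of $\E \ot \F$ when checking that $\E \ot v$ and $u \ot \F$ really are product subsystems and that the $B$-convention signs line up; this is routine but is the step most prone to error, so I would write it out carefully. Everything else is a direct appeal to Theorem~\ref{thm: 2.14}.
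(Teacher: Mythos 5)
Your proposal is correct and is essentially the paper's own argument spelled out: the paper offers no separate proof, calling the proposition a straightforward consequence of Theorem~\ref{thm: 2.14}, and your route — checking $\E \ot v$ and $u \ot \F$ are product subsystems of $\E \ot \F$, invoking the converse part of that theorem, and computing $C_t = \dyad{u_t}{v_t}$ under the obvious identifications — is exactly the intended one. Your measurability remark via Theorem~\ref{thm: always} matches the paper's remarks following Theorem~\ref{thm: 2.14}.
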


\begin{notation}
For a pointed Arveson system $(\E, u)$,
we set
\[
\RootEucal :=
\{ a_1: a \in \RootEu \}.
\]
Thus
$\RootEucal$ is a closed subspace of the Hilbert space $\E_1$;
by definition,
$\RootEu \cong \RootEucal$.
\end{notation}

\begin{theorem}
\label{thm: 3.13}
Let $\E = \Eone \ot_C \Etwo$
for spatial Arveson systems $\Eone$ and $\Etwo$
and a partially isometric morphism $C: \Etwo \to \Eone$,
and let $u^2 \in \UnitEtwonorm$.
Suppose that
$\E$ is an Arveson system
and that
$u^2$ lies in the initial space of $C$\tu{:}
$C^*_t C_t u^2_t = u^2_t$ $(t>0)$.
Then $u^1 := C u^2$ is a unit which is identified with $u^2$ in $\E$ and,
denoting the common unit in $\E$ by $u$,
\[
\RootEucal = \RootEoneuonecal \op_{C_1} \RootEtwoutwocal.
\]
\end{theorem}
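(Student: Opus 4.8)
The plan is to realise $\RootEucal$ by transporting the root spaces of $\Eone$ and $\Etwo$ through the isometric morphisms $J^1,J^2$ supplied by Theorem~\ref{thm: 2.14}, and to recognise the induced inner product as that of the amalgamated Hilbert space $\RootEoneuonecal\op_{C_1}\RootEtwoutwocal$ (whose underlying semi-inner product on $\RootEoneuonecal\op\RootEtwoutwocal$ is $\ip{(x_1,x_2)}{(y_1,y_2)}=\ip{x_1}{y_1}+\ip{x_2}{y_2}+\ip{x_1}{C_1y_2}+\ip{C_1x_2}{y_1}$).

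First I would dispose of the preliminary assertions. As $C$ is a morphism, $u^1:=Cu^2$ satisfies the unit relation; as $C_t$ is a partial isometry and $u^2_t$ lies in its initial space, $\norm{u^1_t}=\norm{u^2_t}=1$, so $u^1$ is a normalised unit of $\Eone$. From $(J^1_t)^*J^2_t=C_t$ and $C^*_tC_tu^2_t=u^2_t$ one gets $\ip{J^1_tu^1_t}{J^2_tu^2_t}=\ip{u^1_t}{C_tu^2_t}=1=\norm{J^1_tu^1_t}\,\norm{J^2_tu^2_t}$, so the equality case of Cauchy--Schwarz gives $J^1_tu^1_t=J^2_tu^2_t=:u_t$; thus $u^1$ and $u^2$ are identified in $\E$ with the common normalised unit $u$. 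I would also note that, $\E$ being an Arveson system, each product subsystem $J^i(\Ei)$ is an Arveson subsystem (Theorem~\ref{thm: always}), whence $J^i\colon\Ei\to J^i(\Ei)$ is an isomorphism of Arveson systems (Theorem~\ref{thm: 7.16}), and therefore $J^i$ and $(J^i)^*$ act measurably on sections.

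Next, set $\Phi(a^1,a^2):=J^1(a^1)+J^2(a^2)$ for $a^1\in\RootEoneuone$, $a^2\in\RootEtwoutwo$. Since both a morphism of product systems and the family of adjoints of an isometric morphism intertwine the $\bfcdot$-products, and since $(J^i_t)^*u_t=u^i_t$, each $J^i(a^i)$ is an addit of $(\E,u)$, and it is a root because $J^i_t$ is isometric and $a^i_t\perp u^i_t$; so $\Phi$ maps into $\RootEu$. Evaluating the inner product at $t=1$, expanding, and using $(J^1_1)^*J^2_1=C_1$, one finds
\begin{align*}
\ip{\Phi(a^1,a^2)_1}{\Phi(b^1,b^2)_1}_{\E_1}
&=\ip{a^1_1}{b^1_1}+\ip{a^2_1}{b^2_1}
\\ &\qquad {}+\ip{a^1_1}{C_1b^2_1}+\ip{C_1a^2_1}{b^1_1},
\end{align*}
precisely the form above; a short computation, again using $C^*_tC_tu^2_t=u^2_t$, shows $C_1$ carries $\RootEtwoutwocal$ into $\RootEoneuonecal$, so the amalgamation is legitimate. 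Hence $\Phi$ descends to a well-defined isometry $\ol{\Phi}\colon\RootEoneuonecal\op_{C_1}\RootEtwoutwocal\to\RootEucal$ with range $J^1_1\RootEoneuonecal+J^2_1\RootEtwoutwocal$.

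The remaining, and I expect principal, step is to show $\ol{\Phi}$ is onto; since it is isometric with complete domain, its range is closed, so it suffices to prove $J^1_1\RootEoneuonecal+J^2_1\RootEtwoutwocal$ dense in $\RootEucal$. Let $a\in\RootEu$ be orthogonal to $J^1(\RootEoneuone)+J^2(\RootEtwoutwo)$. The key observation is that $(J^i)^*a:=((J^i_t)^*a_t)_{t>0}$ is itself a root of $(\Ei,u^i)$: additivity follows from $J^i$ being a morphism together with $(J^i_t)^*u_t=u^i_t$, and orthogonality to $u^i$ from $\ip{u^i_t}{(J^i_t)^*a_t}=\ip{u_t}{a_t}=0$. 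The assumed orthogonality of $a$ forces $(J^i_1)^*a_1\perp\mathcal{R}^{\Ei}_{u^i}$, while by the preceding sentence $(J^i_1)^*a_1\in\mathcal{R}^{\Ei}_{u^i}$, so $(J^i_1)^*a_1=0$; as an addit is determined by its value at $t=1$ (Proposition~\ref{propn: AEu}, whose proof shows $a_1=0$ forces $a=0$), the whole section $(J^i)^*a$ vanishes, i.e.\ $a_t\perp\Ran J^i_t$ for every $t>0$ and $i=1,2$. Finally, for any partition $s_1+\dots+s_n=1$, iterating the root relation gives $a_1=\sum_{j=1}^{n}u_{s_1}\bfcdot\cdots\bfcdot a_{s_j}\bfcdot\cdots\bfcdot u_{s_n}$, so for a product vector $x_1\bfcdot\cdots\bfcdot x_n$ with each $x_j\in\Ran J^1_{s_j}\cup\Ran J^2_{s_j}$ the inner product $\ip{a_1}{x_1\bfcdot\cdots\bfcdot x_n}$ is a sum of terms each carrying a vanishing factor $\ip{a_{s_j}}{x_j}$. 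Since $\E=J^1(\Eone)\bigvee J^2(\Etwo)$, such product vectors are total in $\E_1$, hence $a_1=0$ and $a=0$. Thus $\ol{\Phi}$ is an isometric isomorphism, establishing the claimed identification $\RootEucal=\RootEoneuonecal\op_{C_1}\RootEtwoutwocal$.
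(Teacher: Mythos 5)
Your proof is correct, and its decisive step goes by a genuinely different route from the paper's. The paper realises $\Eone$ and $\Etwo$ inside $\E$, invokes Proposition 2.10 of \cite{Muk-index} to obtain commuting projections and hence the intersection subsystem $\Eone\cap\Etwo$, and then, for a given root $a$ of $(\E,u)$, produces the explicit decomposition $a_t=(P_{\Eonet}+P_{\Etwot}-P_{\Eonet\cap\Etwot})a_t$ from the compressed roots $b^1,b^2,b$; the equality of $a$ with this compression is forced by Proposition~\ref{propn: 3.12} (injectivity of the compression from addits of $\E$ to addits of the generating inclusion subsystem $(\Eonet\vee\Etwot)_{t>0}$), which even yields that $a_1$ lies in the algebraic sum $\RootEoneuonecal+\RootEtwoutwocal$. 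You instead argue by orthocomplement in the Hilbert space $\RootEu$: if $a$ is orthogonal to $J^1(\RootEoneuone)+J^2(\RootEtwoutwo)$, then the compressions $(J^i)^*a$ --- which you correctly identify as roots of $(\Ei,u^i)$, by the same computation the paper uses for $b^1,b^2$ --- vanish because their values at $t=1$ lie in, and are orthogonal to, $\mathcal{R}^{\Ei}_{u^i}$; the iterated addit expansion over partitions, together with totality of product vectors with factors from $\Ran J^1_{s_j}\cup\Ran J^2_{s_j}$ (exactly the description of $J^1(\Eone)\bigvee J^2(\Etwo)$ as generated by $(\Eonet\vee\Etwot)_{t>0}$, per the remark after Theorem~\ref{thm: 2.14}), then gives $a_1=0$. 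This avoids both the intersection subsystem (and the appeal to \cite{Muk-index}) and Proposition~\ref{propn: 3.12}, so it is more elementary and self-contained; what it gives up is the paper's sharper conclusion that $\RootEucal$ equals the algebraic sum, but density of $J^1_1\RootEoneuonecal+J^2_1\RootEtwoutwocal$ in the closed space $\RootEucal$ is all the theorem requires --- the same reduction the paper makes when it notes it suffices to prove $\RootEoneuonecal\vee\RootEtwoutwocal=\RootEucal$. Your preliminaries (the Cauchy--Schwarz identification of $J^1u^1$ with $J^2u^2$, the isometry computation exhibiting the $C_1$-amalgamated inner product, and the check that $C_1$ maps $\RootEtwoutwocal$ into $\RootEoneuonecal$) are sound and in fact more explicit than the paper, which absorbs them into the amalgamation machinery; only note that the range of $\ol{\Phi}$ should be stated as the closure of $J^1_1\RootEoneuonecal+J^2_1\RootEtwoutwocal$, which your subsequent density argument already accommodates.
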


\begin{proof}
It follows from Theorem~\ref{thm: 2.14}
that we may identify $\Eone$ and $\Etwo$ with
subsystems of $\E$,
and $C$ with $( ( J^1_t )^* J^2_t )_{t>0}$
where $J^1$ and $J^2$ are the corresponding inclusion morphisms.
By Proposition 2.10 of~\cite{Muk-index},
the projections $P_{\Eonet}$ and $P_{\Etwot}$ commute,
so $P_{\Eonet \cap \Etwot} = P_{\Eonet} P_{\Etwot}$ ($t>0$).
Thus $\Eone \cap \Etwo := \big( \Eonet \cap \Etwot \big)_{t>0}$
is a product subsystem of $\E$.
Under this identification
$u^2$ and $u^1$ are identified, and
$\RootEoneuonecal \op_{C_1} \RootEtwoutwocal$ coincides with
$\RootEoneuonecal \vee \RootEtwoutwocal$ in $\RootEucal$.
The theorem is therefore proved once it is shown that
$\RootEoneuonecal \vee \RootEtwoutwocal = \RootEucal$.

Let $a \in \RootEu$ and set
$c := \big( J^1_t b^1_t +  J^2_t b^2_t -  J_t b_t \big)_{t>0}$
where
$b^1_t = (J^1_t)^* a_t$,
$b^2_t = (J^2_t)^* a_t$
and
$b_t = (J_t)^* a_t$
($t>0$),
and
$J$ denotes the inclusion morphism $\Eone \cap \Etwo \to \E$.
Thus
\[
c_t = \big( P_{\Eonet} + P_{\Etwot} - P_{\Eonet \cap \Etwot} \big) a_t
=
P_{ \Eonet \vee \Etwot } \, a_t
\qquad
(t>0).
\]
\emph{Claim}.
$c \in \RootEu$.
First note that
\begin{align*}
b_s^1 \ot u_t^1 + u_s^1 \ot b_t^1
&=
( J^1_s \ot J^1_t )^* ( a_s \ot u_t + u_s \ot a_t )
\\
&=
( J^1_s \ot J^1_t )^* \BEst a_{s+t}
=
\BEonest ( J^1_{s+t} )^* a_{s+t} = \BEonest b^1_{s+t}
\qquad
(s,t>0)
\end{align*}
so $b^1 \in \RootEoneuone$.
Similarly,
$b^2 \in \RootEtwoutwo$ and $b \in R_u^{\Eone \cap \Etwo}$.
Thus
$J^1 b^1, J^2 b^2, J b \in \RootEu$, so  $c \in \RootEu$.

 Now $E := \big( \Eonet \vee \Etwot \big)_{t>0}$
 is an inclusion subsystem which generates the Arveson system $\E$,
 and
 \[
 0 =
 P_{ \Eonet \vee \Etwot } ( a_t - c_t )
 =
 P^E_t ( a_t - c_t )
 =
 \iEt ( \iEt )^* ( a_t - c_t )
 \qquad
 (t>0),
 \]
so
$( \iE )^* ( a - c ) = 0$.
Since
$(a-c) \in \RootEu$,
it follows from
Proposition~\ref{propn: 3.12} that
$a-c = 0$.
Thus
\[
a_1 = c_1 = J^1_1 b_1^1 + J^2_1 b^2_1 - J_1 b_1
\in
\RootEoneucal + \RootEtwoucal
\subset
\RootEoneucal \vee \RootEtwoucal.
\]
The result follows.
\end{proof}

\begin{corollary}
Let $(\Eone, u^1)$ and $(\Etwo, u^2)$ be pointed Arveson systems.
Then, identifying $\E = \Eone \ot_{u^1,u^2} \Etwo$
with
$( \Eone \ot u^2 ) \bigvee ( u^1 \ot \Etwo )$,
and letting $u$ denote $u^2$ identified with $u^1$,
\[
\RootEucal = \RootEoneuonecal \op_{C_1} \RootEtwoutwocal,
\]
for the partially isometric morphism
$C:= \big( \dyad{ u^1_t }{ u^2_t } \big)_{t>0} : \E^2 \to \E^1$.
\end{corollary}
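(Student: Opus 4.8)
The plan is to obtain the corollary as a direct specialisation of Theorem~\ref{thm: 3.13} to the morphism $C = \big(\dyad{u^1_t}{u^2_t}\big)_{t>0}: \Etwo \to \Eone$, so that nothing new has to be proved beyond checking that this particular $C$ meets all the hypotheses there. First I would record that, by the remarks following Theorem~\ref{thm: 2.14} together with Proposition~\ref{propn: 4.1}, the amalgamated product $\Eone \ot_{u^1,u^2}\Etwo$ is \emph{by definition} $\Eone \ot_C \Etwo$ for this $C$, and that under the identification with $(\Eone \ot u^2)\bigvee(u^1\ot\Etwo)$ the corresponding inclusion morphisms $J^1: x \mapsto x\ot u^2$ and $J^2: y\mapsto u^1\ot y$ satisfy $(J^1_t)^*J^2_t = C_t$. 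This provides the bridge between the abstract amalgamated-product hypotheses of Theorem~\ref{thm: 3.13} and the concrete setting of the corollary.

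Next I would verify the conditions needed to invoke Theorem~\ref{thm: 3.13}. \emph{$C$ is a partially isometric morphism}: each $C_t$ maps $\Etwot$ to $\Eonet$, and since $u^1$ and $u^2$ are normalised, $C_t^*C_t = \norm{u^1_t}^2\,\dyad{u^2_t}{u^2_t} = P_{\Comp u^2_t}$ is a projection, so $C_t$ is partially isometric with $\norm{C_t}=1$ (giving quasicontractivity); the intertwining identity $\BEonest C_{s+t} = (C_s\ot C_t)B^{\Etwo}_{s,t}$ follows from the factorisations $u^1_{s+t} = u^1_s\bfcdot u^1_t$, $u^2_{s+t} = u^2_s\bfcdot u^2_t$ and unitarity of the structure maps, and measurability is inherited from that of the units. \emph{$u^2$ lies in the initial space of $C$}: $C_t^*C_t u^2_t = P_{\Comp u^2_t}u^2_t = u^2_t$ ($t>0$), again by normalisation. \emph{$\E := \Eone \ot_C \Etwo$ is an Arveson system}: this is Proposition~\ref{propn: 4.1}, since $\E$ is realised as the product subsystem $(\Eone \ot u^2)\bigvee(u^1\ot\Etwo)$ of the tensor product Arveson system $\Eone\ot\Etwo$, hence an Arveson subsystem by Theorem~\ref{thm: always}.

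With the hypotheses verified, Theorem~\ref{thm: 3.13} yields that $u^1 = Cu^2$ (indeed $(Cu^2)_t = \ip{u^2_t}{u^2_t}u^1_t = u^1_t$), that $u^1$ is identified with $u^2$ in $\E$ — the common unit $u$ — and that $\RootEucal = \RootEoneuonecal \oplus_{C_1}\RootEtwoutwocal$, which is exactly the assertion. I do not anticipate any genuine obstacle here; the one point requiring mild care is keeping the two notational conventions aligned — the corollary fixes the identification $\E \cong (\Eone\ot u^2)\bigvee(u^1\ot\Etwo)$ from the outset, whereas Theorem~\ref{thm: 3.13} starts from the abstract amalgamated product and imports the identification via Theorem~\ref{thm: 2.14} — and this is precisely what the first paragraph takes care of.
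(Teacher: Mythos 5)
Your proposal is correct and follows exactly the route the paper intends: the corollary is stated there without proof as an immediate specialisation of Theorem~\ref{thm: 3.13} (via Proposition~\ref{propn: 4.1} and the converse part of Theorem~\ref{thm: 2.14}), and your paragraph merely spells out the routine verifications that $C$ is a partially isometric morphism with $Cu^2=u^1$ and $u^2$ in its initial space, all of which check out.
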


Note that, in this case,
we directly see the orthogonality
\[
\ip{ a^1_1 }{ a^2_1 }_{C_1}
=
\ip{ a^1_1 }{ C_1 a^2_1 }
=
\ip{ a^1_1 }{ u^1_1 } \, \ip{ u^2_1 }{ a^2_1 }
=
0
\qquad
(a^1 \in \RootEoneuone,
a^2 \in \RootEtwoutwo).
\]

\begin{remark}
Root spaces need not behave well under amalgamation
over contractive morphisms that are not partially isometric.
\end{remark}

\begin{example}
Fix $\lambda \neq 0$.
Set $\E = \Eone \ot_C \Etwo$
where $\Eone = \Etwo = \Fockk$ for
the trivial Hilbert space $\kil = {\{0\}}$,
and
$C = \big( \dyad{ u^1_t }{ u^2_t } \big)_{t>0}$
for the units
$u^1 := \Vac^{\{0\}}$
and
$u^2 := \big( e^{ - t \lambda^2/2 } \Vac_t^{\{0\}} \big)_{t>0}$.
Theorem 2.7 of~\cite{Muk-index}
implies that
$\E$ is isomorphic to the product system generated by the
normalised units
$\Vac^\Comp$ and $\vp^\lambda$ of $\Fock^\Comp$,
in other words
$\E$ is isomorphic to the Fock Arveson system $\Fock^\Comp$ itself.
Thus
\[
\RootEoneuonecal = \{ 0 \} = \RootEtwoutwocal,
\ \text{ but, for any unit $u$ of $\E$, } \
\RootEucal \cong \Comp.
\]
\end{example}

For an inclusion subsystem $F$ of an Arveson system $\E$,
consider the following family of
orthogonal projections in $B( \E_1 )$:
\begin{equation}
\label{eqn: PF}
P^F_{r,t} :=
\left\{
 \begin{array}{ll}
 P_{F_{t}} \ot \IEoneminust
&
\text{ if }
0=r<t<1
\\
P_{F_{1}}
&
\text{ if }
0=r \text{ and } t=1
\\
\IEr \ot P_{F_{t-r}} \ot \IEoneminust
&
\text{ if }
0<r<t<1
\\
\IEr \ot P_{F_{1-r}}
&
\text{ if }
0<r<t=1.
\end{array}, \right.
\end{equation}
It follows from Theorem~\ref{thm: Lie random} below,
and the first remark following it,
that,
for a product subsystem $\F$ of $\E$,
\[
\PFst \to \IEone
\ \text{ as } \
(t-s) \to 0.
\]

\begin{theorem}
\label{thm: 4.2}
Let $\E$ and $\F$ be spatial Arveson systems.
Then,
for any normalised units $u$ and $v$ of $\E$ and $\F$ respectively,
\[
\E \ot_{u,v} \F
\cong
(\E \ot \FI ) \bigvee ( \EI \ot \F ).
\]
\end{theorem}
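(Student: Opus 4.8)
The plan is, via Proposition~\ref{propn: 4.1}, to replace $\E \ot_{u,v} \F$ by the product subsystem $\G := (\E \ot v) \bigvee (u \ot \F)$ of $\E \ot \F$, and then to prove that $\G$ coincides with $\mathcal{H} := (\E \ot \FI) \bigvee (\EI \ot \F)$; since $\mathcal{H}$ manifestly involves neither $u$ nor $v$, this yields the asserted independence of the reference units. The inclusion $\G \subseteq \mathcal{H}$ is immediate: $v$ is a unit of $\F$, so $\E \ot v \subseteq \E \ot \FI$, and $u$ is a unit of $\E$, so $u \ot \F \subseteq \EI \ot \F$, and $\mathcal{H}$ is a product subsystem containing both generators of $\G$. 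For the reverse inclusion it suffices, by the symmetry interchanging $\E$ and $\F$, to show $\E \ot \FI \subseteq \G$. By Theorem~\ref{thm: 3.7}(a), $\FI$ is the product subsystem of $\F$ generated by $v$ together with its roots, equivalently the product system generated by the inclusion subsystem with fibres $\Comp v_t \oplus \{ a_t : a \in \RootFv \}$; tensoring with $\E$, the subsystem $\E \ot \FI$ is generated by the inclusion subsystem of $\E \ot \F$ with fibres $\E_t \ot v_t \oplus \E_t \ot \{ a_t : a \in \RootFv \}$. As $\E_t \ot v_t = (\E \ot v)_t \subseteq \G_t$, everything reduces to the assertion that $x \ot a_T \in \G_T$ for every $T>0$, every $x \in \E_T$ and every root $a$ of $(\F,v)$.

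I would establish this by a telescoping orthogonality estimate. Fix $T$, $x$ and $a$, and let $z$ be the component of $x \ot a_T$ orthogonal to the closed subspace $\G_T$ of $\E_T \ot \F_T$; the goal is $z = 0$. Given a partition $0 = s_0 < s_1 < \cdots < s_m = T$, write $\delta_j := s_j - s_{j-1}$ and let $a^{(j)} \in \F_T$ be the image under the structure maps of $v_{\delta_1} \ot \cdots \ot v_{\delta_{j-1}} \ot a_{\delta_j} \ot v_{\delta_{j+1}} \ot \cdots \ot v_{\delta_m}$. Iterating the root relation $a_{s+t} = a_s \bfcdot v_t + v_s \bfcdot a_t$ gives $a_T = \sum_{j=1}^m a^{(j)}$; since $v$ is normalised, $\norm{a_t}^2 = t\norm{a_1}^2$ for roots $a$, hence $\norm{a^{(j)}}^2 = \delta_j \norm{a_1}^2$, and since $a_t \perp v_t$ for all $t$ the vectors $a^{(1)}, \dots, a^{(m)}$ are mutually orthogonal. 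Now for all $y_1 \in \E_{s_{j-1}}$ and $y_2 \in \E_{T-s_j}$ the vector
\[
(y_1 \ot v_{s_{j-1}}) \bfcdot (u_{\delta_j} \ot a_{\delta_j}) \bfcdot (y_2 \ot v_{T-s_j}) = (y_1 \bfcdot u_{\delta_j} \bfcdot y_2) \ot a^{(j)}
\]
is a product of elements of $\E \ot v$ and $u \ot \F$, hence lies in $\G_T$; therefore $z \perp \Ran(Q_j \ot R_j)$, where $Q_j := P^{\Comp u}_{s_{j-1}, s_j} \in B(\E_T)$ is the projection onto $\E_{s_{j-1}} \bfcdot \Comp u_{\delta_j} \bfcdot \E_{T-s_j}$ in the notation of~\eqref{eqn: PF}, and $R_j \in B(\F_T)$ is the rank-one projection onto $\Comp a^{(j)}$.

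From $(Q_j \ot R_j) z = 0$ one gets $(I \ot R_j) z = \big( (I-Q_j) \ot I \big)(I \ot R_j) z$, and as $R_j a^{(j)} = a^{(j)}$ this gives the identity $\ip{z}{x \ot a^{(j)}} = \ip{(I \ot R_j) z}{\big( (I-Q_j) x \big) \ot a^{(j)}}$, whence $\abs{\ip{z}{x \ot a^{(j)}}} \les \norm{(I \ot R_j) z}\, \norm{(I-Q_j) x}\, \sqrt{\delta_j}\, \norm{a_1}$. Summing over $j$, using $a_T = \sum_j a^{(j)}$, the Cauchy--Schwarz inequality, and $\sum_j \norm{(I \ot R_j) z}^2 \les \norm{z}^2$ (the ranges of the $R_j$ being mutually orthogonal),
\[
\abs{\ip{z}{x \ot a_T}} \les \norm{a_1}\, \norm{z}\, \Big( \sum_{j=1}^m \norm{(I-Q_j) x}^2\, \delta_j \Big)^{1/2}.
\]
By the convergence $P^{\Comp u}_{s,r} \to I$ as $(r-s) \to 0$ recorded just before the statement of the theorem (a consequence of Theorem~\ref{thm: Lie random}, applied to the one-dimensional product subsystem $(\Comp u_t)_{t>0}$ of $\E$), $\norm{(I-Q_j) x} = \norm{x - P^{\Comp u}_{s_{j-1}, s_j} x} \to 0$ uniformly as the mesh of the partition tends to $0$, so the right-hand side tends to $0$. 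Hence $\ip{z}{x \ot a_T} = 0$; since $z$ is the orthogonal component of $x \ot a_T$ this forces $\norm{z}^2 = 0$, so $x \ot a_T \in \G_T$, as required.

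The main obstacle is precisely this last assertion. The naive attempt to exhibit $x \ot a_T$ directly as an element of $\G_T$ fails at the level of any fixed partition, because one would need the single vector $x$ to factor through $u$ on \emph{every} subinterval carrying a piece of $a_T$; the resolution is to pass to the orthogonal complement of $\G_T$ and play the telescoping $a_T = \sum_j a^{(j)}$ off against the \emph{quantitative} smallness of $\norm{x - P^{\Comp u}_{s,r} x}$, and it is here that Liebscher's automatic-continuity result (Theorem~\ref{thm: Lie random} and the remark following it) is used essentially. The remaining ingredients — the reductions of the first paragraph, and the verification that the families appearing are inclusion, respectively product, subsystems — are routine.
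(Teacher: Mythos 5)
Your proof is correct and follows essentially the same route as the paper's: the reduction via Proposition~\ref{propn: 4.1}, symmetry and Theorem~\ref{thm: 3.7} to showing $x \ot a_T \in \G_T$ for $a \in \RootFv$, then the root decomposition $a_T = \sum_j a^{(j)}$ over a partition, the factorisation placing $\big(P^{\Comp u}_{s_{j-1},s_j}x\big) \ot a^{(j)}$ in $\G_T$, the mutual orthogonality of the $a^{(j)}$, and the convergence $P^{\Comp u}_{s,t} \to I$ from Theorem~\ref{thm: Lie random}. The only difference is presentational: the paper estimates $\big\| x \ot a_T - \sum_j \big(P^{\Comp u}_{s_{j-1},s_j}x\big) \ot a^{(j)} \big\|$ directly, the approximant visibly lying in $\G_T$, whereas you dualise by pairing against the component of $x \ot a_T$ orthogonal to $\G_T$ and applying Cauchy--Schwarz --- the same estimate in a slightly more roundabout form.
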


\begin{proof}
Let $u \in \UnitEnorm$ and $v \in \UnitFnorm$.
Set $\G := (\E \ot v ) \bigvee ( u \ot \F )$ and,
for $n \in \Nat$ and $i \in \{ 1, \cdots , n \}$,
set
$
P^n_i := P^{\Comp u}_{s,t}
$
where
$\Comp u$ denotes the product subsystem of $\E$ generated by $u$,
and
$(s,t) = ( (i-1)/n, i/n )$.
By Proposition~\ref{propn: 4.1},
$\G \cong \E \ot_{u,v} \F$,
and so, by symmetry,
it suffices to show that
$\E \ot \FI$ is a product subsystem of $\G$.
By Theorem~\ref{thm: 3.7}
it suffices to show that $z \ot a_t \in \Gt$
for $t>0$, $z \in \Et$ and $a \in \RootFv$.
The argument we give, for the case $t=1$
easily adjusts to deal with general $t>0$.
Thus
let $z \in \E_1$ and $a \in \RootFv$ with $\norm{ a } = 1$.

Let $\ve > 0$.
Choose $n \in \Nat$
such that
$\norm{ z - P^{ \Comp u }_{s,t} z } \les \ve$
for
$(t-s) \les 1/n$
and take the root decomposition
\[
a_1
=
\sum_{i=1}^n x^i
\ \text{ where } \
x^i = ( v_{1/n} )^{ \bfcdot \, (i-1) } \bfcdot \, a_{1/n} \bfcdot \, ( v_{1/n} )^{ \bfcdot \, (n-i) }
\qquad
( i = 1, \cdots , n ).
\]
Thus
$\norm { x^i } = \norm { a_{1/n} } = 1/\sqrt{n}$
for each $i$ and,
since $x^i \perp x^j$ for $i \neq j$,
\[
\big\|
z \ot a_1 - \sum\nolimits_{i=1}^n P^n_i z \ot x^i
\big\|^2
=
\big\|
\sum\nolimits_{i=1}^n ( z - P^n_i z ) \ot x^i
\big\|^2
=
\frac{1}{n}
\sum_{i=1}^n \norm{  z - P^n_i z }^2
\les
\ve^2.
\]
We must therefore show that
$P^n_i z \ot x^i \in \Gone$ ($n \in \Nat, i=1, \cdots , n$).
Accordingly,
fix $n \in \Nat$ and $i \in \{ 1, \cdots , n \}$.
Note that
\[
P^n_i z
\in\Linbar
\big\{
c^1 \bfcdot \, \cdots \bfcdot \, c^{i-1} \bfcdot \, u_{1/n} \bfcdot \, c^{i+1} \bfcdot \, \cdots \bfcdot \, c^n:
c^1, \cdots , c^n \in \Eoneovern
\big\}
\]
and,
for $c^1, \cdots , c^n \in \E_{1/n}$,
\begin{align*}
&
\big(
c^1 \bfcdot \, \cdots \bfcdot \, c^{i-1} \bfcdot \, u_{1/n} \bfcdot \, c^{i+1} \bfcdot \, \cdots \bfcdot \, c^n
\big) \ot x^i
=
\\
&
\qquad \quad
( c^1 \ot v_{1/n} ) \bfcdot \, \cdots \bfcdot \, ( c^{i-1}  \ot v_{1/n} ) \bfcdot \, ( u_{1/n}  \ot a_{1/n} ) \bfcdot \,
( c^{i+1} \ot v_{1/n} ) \bfcdot \, \cdots \bfcdot \, ( c^n \ot v_{1/n} ),
\end{align*}
whilst
$$
c^j \ot v_{1/n} \in \Eoneovern \ot v_{1/n} \subset \Goneovern \ (j \neq i)
\ \text{ and } \
u_{1/n}  \ot a_{1/n} \in u_{1/n}  \ot \Foneovern \subset \Goneovern.
$$
It follows that
$
P^n_i z \ot x^i \in \big( \Goneovern \big)^{ \bfcdot \, n } \subset \Gone
$,
as required.
\end{proof}

\begin{remark}
This result reaffirms justification for referring to the above
(spatial) Arveson system as
the \emph{spatial product} of the spatial Arveson systems $\E$ and $\F$.
\end{remark}

\begin{corollary}
\label{cor: 4.3}
Let $\E$ and $\F$ be spatial Arveson systems.
Then,
for normalised units $u$ and $v$ of $\E$ and $\F$ respectively,
\[
(\EI \ot v ) \bigvee ( u \ot \FI )
=
\EI \ot \FI
=
( \E \ot \F )^I.
\]
\end{corollary}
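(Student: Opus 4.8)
The plan is to obtain the first equality by specialising Theorem~\ref{thm: 4.2} and Proposition~\ref{propn: 4.1} to the \emph{type~I} systems $\EI$ and $\FI$ (rather than $\E$ and $\F$), and to obtain the second equality by identifying both $\EI \ot \FI$ and $(\E \ot \F)^I$ as product subsystems of $\E \ot \F$ generated by suitable families of units.

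For the first equality, I first note that $\EI$ and $\FI$ are spatial Arveson systems: they are Arveson subsystems of $\E$ and $\F$ by Theorem~\ref{thm: always}, and they contain the units $u$ and $v$; moreover $(\EI)^I = \EI$ and $(\FI)^I = \FI$, since a type~I system is its own type~I part. Proposition~\ref{propn: 4.1}, applied to the pointed Arveson systems $(\EI, u)$ and $(\FI, v)$, identifies $\EI \ot_{u,v} \FI$ with the product subsystem $(\EI \ot v) \bigvee (u \ot \FI)$ of the tensor product Arveson system $\EI \ot \FI$ — note that $\EI \ot v$ and $u \ot \FI$ really are product subsystems of $\EI \ot \FI$, since $v_t \in \FI_t$ and $u_t \in \EI_t$ for all $t>0$. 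Theorem~\ref{thm: 4.2}, applied to $\EI$ and $\FI$ with the normalised units $u$ and $v$ (the identification there being an equality of product subsystems of the tensor product, as its proof shows), then yields
\[
(\EI \ot v) \bigvee (u \ot \FI)
=
\big( \EI \ot (\FI)^I \big) \bigvee \big( (\EI)^I \ot \FI \big)
=
(\EI \ot \FI) \bigvee (\EI \ot \FI)
=
\EI \ot \FI .
\]
Since $\EI \ot \FI$ is a product subsystem of $\E \ot \F$, this is exactly the first asserted equality.

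For the second equality, the inclusion $\EI \ot \FI \subseteq (\E \ot \F)^I$ is elementary: if $u^i$ is a unit of $\E$ and $v^j$ a unit of $\F$, then $u^i \ot v^j$ is a unit of $\E \ot \F$, and, refining to a common partition, one sees that $(\EI \ot \FI)_T = \EI_T \ot \FI_T$ is the closed linear span of the vectors $\big( u^{i_1} \ot v^{j_1} \big)_{t_1} \bfcdot \cdots \bfcdot \big( u^{i_n} \ot v^{j_n} \big)_{t_n}$ with $\sum t_k = T$, each of which lies in $(\E \ot \F)^I_T$. For the reverse inclusion the essential input is the classical fact that every unit $w$ of the tensor product $\E \ot \F$ factorises as $w = u' \ot v'$ for a unit $u'$ of $\E$ and a unit $v'$ of $\F$ (cf.~\cite{Arv-noncommutative}); granting it, $w_t = u'_t \ot v'_t \in \EI_t \ot \FI_t$ for all $t>0$, and since $(\E \ot \F)^I$ is generated by the units of $\E \ot \F$ it follows that $(\E \ot \F)^I \subseteq \EI \ot \FI$. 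Combining the two inclusions gives $\EI \ot \FI = (\E \ot \F)^I$.

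The step carrying the genuine content — everything else being bookkeeping around Theorem~\ref{thm: 4.2} — is the factorisation of units of $\E \ot \F$, which underlies the inclusion $(\E \ot \F)^I \subseteq \EI \ot \FI$; a mere index count cannot replace it, since in infinite index $\E \ot \F$ may contain a proper type~I product subsystem of index $\ind \E + \ind \F$. One can extract the factors by slicing $w$ against the fibres of fixed reference units of $\E$ and of $\F$ — the resulting sections are readily seen to be units of $\E$ and of $\F$ by the multiplicativity of $w$ — but verifying that $w_t$ then coincides with $u'_t \ot v'_t$ (equivalently, that each fibre $w_t$ has Schmidt rank one) is precisely where the nontrivial work lies, so it is cleanest to invoke the known result.
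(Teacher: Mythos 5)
Your proposal is correct and follows essentially the same route as the paper: the first equality by specialising Proposition~\ref{propn: 4.1} and Theorem~\ref{thm: 4.2} to the type~I parts, and the second by invoking the well-known factorisation of units of a tensor product, $\UnitEtensorF = \{ u \ot v: u \in \UnitE, v \in \UnitF \}$ (Arveson, Corollary 3.7.3), exactly as the paper does. Your closing remarks on why the unit factorisation is the genuine content simply elaborate what the paper leaves as a citation.
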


\begin{proof}
The first identity follows from
Proposition~\ref{propn: 4.1}
and
Theorem~\ref{thm: 4.2}.
The second is well-known;
it is a consequence of the following identity
(see~\cite{Arv-noncommutative}, Corollary 3.7.3):
\begin{equation}
\label{eqn: UEF}
\UnitEtensorF
=
\big\{ u \ot v: u \in \UnitE, v \in \UnitF \big\}.
\end{equation}
\end{proof}

Our next result is the counterpart for roots of the identity~\eqref{eqn: UEF} for units.
It generalises Corollary~\ref{cor: tensor}.

\begin{theorem}
\label{thm: 4.5}

Let $( \E, u )$ and $(\F, v)$ be pointed Arveson systems.
Then
\[
\RootEtensorFutensorv
=
( \RootEu \ot v ) \op ( u \ot \RootFv ).
\]
\end{theorem}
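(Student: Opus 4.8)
The plan is to reduce this to the Fock case, which is exactly Corollary~\ref{cor: tensor}, using Theorem~\ref{thm: 3.7} and Corollary~\ref{cor: 4.3} to confine roots to the type I parts. First I would dispose of the inclusion $( \RootEu \ot v ) \op ( u \ot \RootFv ) \subseteq \RootEtensorFutensorv$, which is a routine check: for $b \in \RootEu$ the section $( b_t \ot v_t )_{t>0}$ satisfies the additivity relation for $( \E \ot \F, u \ot v )$ because $b_{s+t} \ot v_{s+t} = ( b_s \bfcdot u_t + u_s \bfcdot b_t ) \ot ( v_s \bfcdot v_t )$ and the structure maps of $\E \ot \F$ are the tensor products of those of $\E$ and $\F$, while $b_t \ot v_t \perp u_t \ot v_t$; symmetrically for $u \ot \RootFv$. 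These two subspaces are mutually orthogonal inside the root space since $\ip{ b_1 \ot v_1 }{ u_1 \ot c_1 } = \ip{ b_1 }{ u_1 } \ip{ v_1 }{ c_1 } = 0$, and $b \mapsto b \ot v$ and $c \mapsto u \ot c$ are isometric because $u$ and $v$ are normalised; so this direction, including the internal orthogonality of the right-hand side, is settled.

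For the reverse inclusion I would argue as follows. Applying Theorem~\ref{thm: 3.7}(b) to the spatial Arveson system $\E \ot \F$ with the normalised unit $u \ot v$ gives $\RootEtensorFutensorv = R^{( \E \ot \F )^I}_{u \ot v}$, and Corollary~\ref{cor: 4.3} identifies $( \E \ot \F )^I$ with $\EI \ot \FI$; likewise $\RootEu = R^{\EI}_u$ and $\RootFv = R^{\FI}_v$. Thus the entire statement takes place inside the type I systems $\EI \cong \Fock^{\kil(\E)}$ and $\FI \cong \Fock^{\kil(\F)}$. I would then fix isomorphisms of pointed Arveson systems $\psi_1 : ( \Fock^{\kil(\E)}, \Vac^{\kil(\E)} ) \to ( \EI, u )$ and $\psi_2 : ( \Fock^{\kil(\F)}, \Vac^{\kil(\F)} ) \to ( \FI, v )$ — available since all normalised units of a type I system yield isomorphic pointed systems (remarks following Definition~\ref{defn: pointed}, as used already in the proof of Theorem~\ref{thm: 3.7}) — and observe that $\psi := \psi_1 \ot \psi_2$ is an isomorphism of pointed Arveson systems onto $( \EI \ot \FI, u \ot v )$, hence carries roots to roots isometrically and induces an isometric isomorphism of the corresponding root spaces. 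Corollary~\ref{cor: tensor} then gives
\[
R^{\Fock^{\kil(\E)} \ot \Fock^{\kil(\F)}}_{\Vac^{\kil(\E)} \ot \Vac^{\kil(\F)}}
=
\big( R_\Vac^{\Fock, \kil(\E)} \ot \Vac^{\kil(\F)} \big)
\op
\big( \Vac^{\kil(\E)} \ot R_\Vac^{\Fock, \kil(\F)} \big),
\]
and since $\psi$ sends a section of the form $b \ot \Vac^{\kil(\F)}$ to $( \psi_1 b ) \ot v$, it carries the first summand onto $R^{\EI}_u \ot v = \RootEu \ot v$ and, symmetrically, the second summand onto $u \ot \RootFv$; transporting the displayed identity through $\psi$ yields the theorem.

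The main obstacle, or rather the only place where care is needed, is bookkeeping: verifying that an isomorphism of pointed Arveson systems restricts to an isometric isomorphism of root spaces, that the tensor product of two such isomorphisms behaves as expected on sections of the form $b \ot (\text{a fixed unit})$, and that the identifications $\RootEtensorFutensorv = R^{\EI\ot\FI}_{u\ot v}$, $\RootEu = R^{\EI}_u$ and $\RootFv = R^{\FI}_v$ are mutually compatible. None of this is deep: the genuine content has been packaged into Theorem~\ref{thm: 3.7}, Corollary~\ref{cor: 4.3} and Corollary~\ref{cor: tensor}, and what remains is verification.
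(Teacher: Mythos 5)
Your proposal is correct and follows essentially the same route as the paper's own proof: reduce to the type I case via Theorem~\ref{thm: 3.7} and the identity $(\E\ot\F)^I=\EI\ot\FI$, transport roots through the tensor product of pointed isomorphisms with Fock systems, and invoke Corollary~\ref{cor: tensor}. Your additional explicit check of the easy inclusion and the bookkeeping remarks are fine but add nothing beyond what the paper's shorter argument already covers.
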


\begin{proof}
First note that,
by Theorem~\ref{thm: 3.7}
and the identity
$
( \E \ot \F )^I = \EI \ot \FI
$,
we may suppose without loss that
$\E$ and $\F$ are type I Arveson systems.
Writing
$(\Eone, u^1)$ and $(\Etwo, u^2)$
for $( \E, u )$ and $(\F, v)$ respectively,
and setting
$\kil^i := \kil( \Ei )$ ($i=1,2$),
there are
isomorphisms of pointed Arveson systems
$\phi^i: ( \Fock^{\kil_i}, \Vac^{\kil_i} ) \to ( \Ei, u^i )$
($i=1,2$).
Since the isomorphism
$
\phi^1 \ot \phi^2:
\big(
\Fock^{\kil_1} \ot \Fock^{\kil_2}, \Vac^{\kil_1} \ot \Vac^{\kil_2}
\big)
\to
\big(
\Eone \ot \Etwo, u^1 \ot u^2
\big)
$
restricts to a bijection of roots,
and maps
\[
R^{ \Fock, \kil_1 }_\Vac \ot \Vac^{ \kil_2 }
\text{ to }
R^{\Eone}_{u^1} \ot u^2
\ \text{ and } \
\Vac^{ \kil_1 } \ot R^{ \Fock, \kil_2 }_\Vac
\text{ to }
u^1 \ot R^{\Eone}_{u^2},
\]
the result follows from
Corollary~\ref{cor: tensor}.
\end{proof}

%%%%%%%%%%%%%%%%%%%%%%%%%%%%%%%%%%%%%%%%%%%%% New section

\section{Cluster construction}
%\label{Cluster}
\label{section: cluster construction}

In the first half of this section we develop a \emph{cluster construction}
for product subsystems of an Arveson system,
and show how the construction leads to a new description of
the type I part of a spatial Arveson system. In the second half
we relate our construction to the Cantor--Bendixson derivative
which sends a closed subset of the unit interval to its `cluster',
namely the collection of its accumulation points,
via the connection to random sets elaborated in~\cite{Lie-random}.

\begin{notation}
For an inclusion subsystem $F$ of an Arveson system $\E$, and $t>0$,
set
\[
\Fminusperpt := \Et \ominus \Fminust
\ \text{ where } \
\Fminust := \underset{0<r<t}{\vee} \,
( \Er \ominus F_r ) \ot ( \Etminusr \ominus F_{t-r} ).
\]
\end{notation}

\begin{proposition}
\label{propn: 7.1}
Let $F$ be an inclusion subsystem of an Arveson system $\E$.
Then $\Fminusperp := \big( \Fminusperpt \big)_{t>0}$
is an inclusion subsystem of $\E$ containing $F$.
\end{proposition}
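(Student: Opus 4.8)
The plan is to check three things: that each $\Fminusperpt$ is a closed subspace of $\Et$ (immediate from the definition); that $F_t \subseteq \Fminusperpt$ for all $t>0$ (the assertion that $\Fminusperp$ contains $F$); and that $\BEst(\Fminusperpsplust) \subseteq \Fminusperps \ot \Fminusperpt$ for all $s,t>0$ (which, the induced structure maps being the compressions of those of $\E$, makes $\Fminusperp$ an inclusion subsystem of $\E$ and simultaneously exhibits $F$ as an inclusion subsystem of it). Throughout I suppress the structure maps of $\E$, identifying $\Esplust$ with $\Es \ot \Et$ and, via the consistency conditions, with any iterated tensor product $\E_{a_1} \ot \cdots \ot \E_{a_n}$ having $\sum a_i = s+t$; these identifications are unambiguous.

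The one elementary fact used repeatedly is this: since $F$ is an inclusion subsystem, $\beta^E_{a,b}(F_{a+b}) \subseteq F_a \ot F_b$ for all $a,b>0$, so under the identification $\E_{a+b} = \E_a \ot \E_b$ one has $F_{a+b} \subseteq F_a \ot \E_b$ and $F_{a+b} \subseteq \E_a \ot F_b$; taking orthogonal complements in $\E_{a+b} = (F_a \ot \E_b) \oplus \big((\E_a \ominus F_a) \ot \E_b\big)$ and in the symmetric decomposition yields
\[
(\E_a \ominus F_a) \ot \E_b \subseteq \E_{a+b} \ominus F_{a+b}
\quad\text{and}\quad
\E_a \ot (\E_b \ominus F_b) \subseteq \E_{a+b} \ominus F_{a+b}
\qquad (a,b>0).
\]
For the containment $F_t \subseteq \Fminusperpt$: fix $t>0$ and $0<r<t$; since $F_t \subseteq F_r \ot F_{t-r}$ and $F_r \ot F_{t-r} \perp (\Er \ominus F_r) \ot (\Etminusr \ominus F_{t-r})$, we get $F_t \perp (\Er \ominus F_r) \ot (\Etminusr \ominus F_{t-r})$; joining over $0<r<t$ gives $F_t \perp \Fminust$, i.e. $F_t \subseteq \Fminusperpt$.

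It remains to prove $\BEst(\Fminusperpsplust) \subseteq \Fminusperps \ot \Fminusperpt$. The orthogonal complement of $\Fminusperps \ot \Fminusperpt$ inside $\Es \ot \Et$ is $(\Fminuss \ot \Et) + (\Es \ot \Fminust)$, so it suffices to show $\Fminuss \ot \Et \subseteq \Fminussplust$ and $\Es \ot \Fminust \subseteq \Fminussplust$ inside $\Esplust = \Es \ot \Et$, and then take orthogonal complements. For the second, for each $0<r<t$ the elementary fact (with $a=s$, $b=r$) gives
\[
\Es \ot \big((\Er \ominus F_r) \ot (\Etminusr \ominus F_{t-r})\big)
=
\big(\Es \ot (\Er \ominus F_r)\big) \ot (\Etminusr \ominus F_{t-r})
\subseteq
(\E_{s+r} \ominus F_{s+r}) \ot (\Etminusr \ominus F_{t-r})
\subseteq
\Fminussplust,
\]
the last inclusion because $0<s+r<s+t$ and $(s+t)-(s+r)=t-r$, so the middle space is among those whose join defines $\Fminussplust$; joining over $0<r<t$ gives $\Es \ot \Fminust \subseteq \Fminussplust$. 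The first inclusion is symmetric: for $0<q<s$, the elementary fact (with $a=s-q$, $b=t$) gives
\[
\big((\E_q \ominus F_q) \ot (\E_{s-q} \ominus F_{s-q})\big) \ot \Et
=
(\E_q \ominus F_q) \ot \big((\E_{s-q} \ominus F_{s-q}) \ot \Et\big)
\subseteq
(\E_q \ominus F_q) \ot (\E_{s-q+t} \ominus F_{s-q+t})
\subseteq
\Fminussplust,
\]
using $0<q<s+t$ and $(s+t)-q=s-q+t$; joining over $0<q<s$ gives $\Fminuss \ot \Et \subseteq \Fminussplust$. Taking orthogonal complements inside $\Es \ot \Et$ of $(\Fminuss \ot \Et) + (\Es \ot \Fminust) \subseteq \Fminussplust$ yields, under the identification $\Esplust = \Es \ot \Et$, exactly $\BEst(\Fminusperpsplust) \subseteq \Fminusperps \ot \Fminusperpt$, as required.

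There is no deep obstacle here; the only care needed is the bookkeeping of the tensor identifications and two routine interchanges — joining with tensoring by a fixed Hilbert space, $H \ot \bigvee_i K_i = \bigvee_i (H \ot K_i)$, and the description of orthogonal complements of tensor products of closed subspaces via the commuting projections $P_{H_1} \ot I$ and $I \ot P_{K_1}$. The one place where the consistency (associativity) conditions are genuinely invoked is in the two displayed chains above, through the re-indexings $r \mapsto s+r$ and $s-q \mapsto s-q+t$; I would make those explicit at the level of structure maps to be safe.
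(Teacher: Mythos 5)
Your proposal is correct and follows essentially the same route as the paper, which proves the slightly more general Proposition~\ref{propn: 5.6} (for an ordered pair of inclusion subsystems) by exactly these steps: first $\Fminust \perp F_t$ via $F_{r+(t-r)} \subset F_r \ot F_{t-r}$, then the one-sided inclusions $\Es \ot \Fminust \subset \Fminussplust$ and $\Fminuss \ot \Et \subset \Fminussplust$ using $\Es \ot (F_r)^\perp \subset (F_{s+r})^\perp$, and finally passing to orthogonal complements to get $\Fminusperpsplust \subset (\Es \ot \Fminusperpt) \cap (\Fminusperps \ot \Et) = \Fminusperps \ot \Fminusperpt$. The only cosmetic difference is that you phrase the containment $F_t \subset \Fminusperpt$ as a direct orthogonality statement rather than the paper's chain of inclusions of orthogonal complements.
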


The proof of this proposition is no easier than that of
its generalisation, Proposition~\ref{propn: 5.6},
which is given there (and does not depend on any of the intervening theory).

\begin{definition}
\label{defn: 6.2}
Let $\F$ be a product subsystem of an Arveson system $\E$.
The \emph{cluster of $\F$ in $\E$} is
the product system generated by the inclusion system $\FFminusperp$.
We denote it $\F\,\check{}$.
\end{definition}

\begin{lemma}
\label{lemma: 5.3}
Let $\F$ be  a product subsystem of an Arveson system $\E$,
and let
$s,t>0$.
Then the following hold.
\begin{alist}
\item
$ \FFminusperps \ot \Fockt \subset \FFminusperpsplust$
and
$\Focks \ot \FFminusperpt \subset \FFminusperpsplust$.
\item
$( \FFminusperps \ominus \Focks ) \ot \Fockt \subset \FFminusperpsplust \ominus \Fsplust$
and
$\Fs \ot ( \FFminusperpt \ominus \Fockt ) \subset \FFminusperpsplust \ominus \Fsplust$.
\end{alist}
\end{lemma}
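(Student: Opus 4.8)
The plan is to derive both parts from one structural inclusion for $\FFminussplust$ together with two elementary observations. Throughout I would suppress the structure maps, using associativity of the product system to identify $\Esplust = \Es \ot \Et$ and, when $0 < r < s$, also $\Esplust = \E_r \ot \E_{s-r} \ot \Et$ with $\E_{s+t-r} = \E_{s-r} \ot \Et$ (and the mirror identifications when $s < r < s+t$); since $\F$ is a \emph{product} subsystem these identifications restrict to its fibres, e.g.\ $\Fsplust = \Fs \ot \Ft$ and $\F_{s+t} = \F_r \ot \F_{s-r} \ot \Ft$. For a fibre index $p$ write $\mathcal{F}^\perp_p$ for $\E_p \ominus \F_p$, so that $\FFminus_p = \bigvee_{0<q<p} \mathcal{F}^\perp_q \ot \mathcal{F}^\perp_{p-q}$ and $\FFminusperp_p = \E_p \ominus \FFminus_p$.

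First I would record, for all $p > 0$ and $0 < q < p$: \textbf{(i)} $\mathcal{F}^\perp_q \ot \mathcal{F}^\perp_{p-q} \subseteq \FFminus_p$ by definition, and $\F_p \perp \FFminus_p$ --- hence $\F_p \subseteq \FFminusperp_p$ --- since $\F_p = \F_q \ot \F_{p-q}$ whereas $\F_q \perp \mathcal{F}^\perp_q$; and \textbf{(ii)} the absorption inclusions $\mathcal{F}^\perp_q \ot \F_{p-q} \subseteq \mathcal{F}^\perp_p$ and $\F_q \ot \mathcal{F}^\perp_{p-q} \subseteq \mathcal{F}^\perp_p$, read off from the orthogonal decomposition $\mathcal{F}^\perp_p = (\E_q \ot \E_{p-q}) \ominus (\F_q \ot \F_{p-q}) = (\mathcal{F}^\perp_q \ot \F_{p-q}) \op (\F_q \ot \mathcal{F}^\perp_{p-q}) \op (\mathcal{F}^\perp_q \ot \mathcal{F}^\perp_{p-q})$. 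The central step is then to prove
\[
\FFminussplust \ \subseteq\ (\FFminuss \ot \Et)\ \vee\ (\Es \ot \FFminust)\ \vee\ (\mathcal{F}^\perp_s \ot \mathcal{F}^\perp_t).
\]
For this I would take a generating subspace $\mathcal{F}^\perp_r \ot \mathcal{F}^\perp_{s+t-r}$ of $\FFminussplust$ ($0 < r < s+t$) and argue in three cases. If $r = s$ it is the third term. If $0 < r < s$, expand $\mathcal{F}^\perp_{s+t-r} = (\E_{s-r}\ot\Et) \ominus (\F_{s-r}\ot\Ft)$ via (ii), so that $\mathcal{F}^\perp_r \ot \mathcal{F}^\perp_{s+t-r}$ splits as $(\mathcal{F}^\perp_r \ot \mathcal{F}^\perp_{s-r} \ot \Ft) \op (\mathcal{F}^\perp_r \ot \F_{s-r} \ot \mathcal{F}^\perp_t) \op (\mathcal{F}^\perp_r \ot \mathcal{F}^\perp_{s-r} \ot \mathcal{F}^\perp_t)$; by (i) the first and third summands lie in $\FFminuss \ot \Et$, and the middle one lies in $\mathcal{F}^\perp_s \ot \mathcal{F}^\perp_t$ since $\mathcal{F}^\perp_r \ot \F_{s-r} \subseteq \mathcal{F}^\perp_s$ by (ii). The case $s < r < s+t$ is the mirror image: expand $\mathcal{F}^\perp_r = (\Es\ot\E_{r-s}) \ominus (\Fs\ot\F_{r-s})$, whereupon the three summands of $\mathcal{F}^\perp_r \ot \mathcal{F}^\perp_{s+t-r}$ fall into $\mathcal{F}^\perp_s \ot \mathcal{F}^\perp_t$ (using $\F_{r-s} \ot \mathcal{F}^\perp_{s+t-r} \subseteq \mathcal{F}^\perp_t$), $\Es \ot \FFminust$ and $\Es \ot \FFminust$ (using $\mathcal{F}^\perp_{r-s} \ot \mathcal{F}^\perp_{s+t-r} \subseteq \FFminust$) respectively.

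With this inclusion in hand, part (a) becomes a legwise orthogonality check: $\FFminusperps \ot \Ft$ is orthogonal to $\FFminuss \ot \Et$ in the first leg (as $\FFminusperps = \Es \ominus \FFminuss$), to $\Es \ot \FFminust$ in the second leg (as $\Ft \perp \FFminust$ by (i)), and to $\mathcal{F}^\perp_s \ot \mathcal{F}^\perp_t$ in the second leg (as $\Ft \perp \mathcal{F}^\perp_t$); hence $\FFminusperps \ot \Ft \perp \FFminussplust$, i.e.\ $\FFminusperps \ot \Ft \subseteq \FFminusperpsplust$. Symmetrically $\Fs \ot \FFminusperpt$ is orthogonal to all three terms --- first leg for the outer two ($\Fs \perp \FFminuss$, $\Fs \perp \mathcal{F}^\perp_s$), second leg for the middle one ($\FFminusperpt \perp \FFminust$) --- so $\Fs \ot \FFminusperpt \subseteq \FFminusperpsplust$. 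For part (b) I would combine (a) with (i): $(\FFminusperps \ominus \Fs) \ot \Ft \subseteq \FFminusperps \ot \Ft \subseteq \FFminusperpsplust$, and it is orthogonal to $\Fsplust = \Fs \ot \Ft$ because $\Fs \subseteq \FFminusperps$ forces $\FFminusperps \ominus \Fs \perp \Fs$; likewise $\Fs \ot (\FFminusperpt \ominus \Ft) \subseteq \FFminusperpsplust$ and is orthogonal to $\Fs \ot \Ft = \Fsplust$ since $\Ft \subseteq \FFminusperpt$. I expect the only genuinely laborious point to be the three-case verification of the displayed inclusion, but once the associativity identifications are pinned down each case just amounts to expanding a single $\mathcal{F}^\perp$-fibre into its three off-diagonal tensor summands and quoting (i) and (ii); everything downstream is routine factorwise orthogonality.
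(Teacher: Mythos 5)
Your proof is correct and follows essentially the same route as the paper's: both arguments run over the generating subspaces $\FFperpr \ot \FFperpsplustminusr$ of $\FFminussplust$ in the three cases $r<s$, $r=s$, $r>s$, use the product-subsystem decomposition of $\E_p \ominus \F_p$ to absorb terms, and deduce (a) by orthogonality, with (b) then following from (a) exactly as in the paper. The only (cosmetic) difference is that you package the case analysis into one symmetric envelope for $\FFminussplust$ yielding both inclusions of (a) at once, whereas the paper derives the asymmetric bound $\FFminussplust \subset \FFminuss \ot \Ft \op \Es \ot \FFperpt = (\FFminusperps \ot \Ft)^\perp$ and obtains the second inclusion by symmetry.
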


\begin{proof}
Let $s,t>0$.
(a)
Let $r>0$ satisfy $0<r<s+t$.

If
$r<s$, then
\begin{align*}
\FFperpr \ot \FFperpsplustminusr
&=
\FFperpr \ot ( \Focksminusr \ot \Fockt )^\perp
\\
&=
\FFperpr \ot
\big(
\FFperpsminusr \ot \Fockt \, \op \, \Esminusr \ot \FFperpt
\big)
\subset
\FFminuss \ot \Fockt \, \op \, \Es \ot \FFperpt.
\end{align*}

If
$r=s$, then
\[
\FFperpr \ot \FFperpsplustminusr
=
\FFperps \ot \FFperpt
\subset
\Es \ot \FFperpt.
\]

If
$r>s$, then
\begin{align*}
\FFperpr \ot \FFperpsplustminusr
&\subset
\Es \ot \Erminuss \ot \FFperpsplustminusr
\subset
\Es \ot ( \Fockrminuss \ot \Focksplustminusr )^\perp
=
\Es \ot \FFperpt.
\end{align*}
Therefore
\[
\FFminussplust
\subset
\FFminuss \ot \Fockt \op \Es \ot \FFperpt
=
( \FFminusperps \ot \Fockt )^\perp.
\]
The first inclusion follows.
The second now follows by symmetry.

(b)
Since $\F$ is a product subsystem of $\E$,
the first inclusion in
(b) follows from the first inclusion in (a):
\begin{equation*}
( \FFminusperps \ominus \Focks ) \ot \Fockt
=
\FFminusperps \ot \Fockt \, \ominus \, \Focks \ot \Fockt
\subset
\FFminusperpsplust \ominus \Focksplust.
\end{equation*}
The second inclusion in (b) follows similarly.
\end{proof}

\begin{corollary}
\label{cor: 5.3plus}
Let $(\E,u)$ be a pointed Arveson system,
and set $\Fock = \Comp u$.
Then, for $s,t>0$,
\begin{align*}
&
 \FFminusperps \ot u_t \subset \FFminusperpsplust
 \ \text{ and } \
( \FFminusperps \ominus \Focks ) \ot u_t \subset
\FFminusperpsplust \ominus \Focksplust;
\\
&
u_s \ot \FFminusperpt \subset \FFminusperpsplust
\ \text{ and } \
u_s \ot ( \FFminusperpt \ominus \Fockt ) \subset
\FFminusperpsplust \ominus \Focksplust.
\end{align*}
\end{corollary}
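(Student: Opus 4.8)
The plan is to obtain this as an immediate specialisation of Lemma~\ref{lemma: 5.3} to the one-dimensional product subsystem $\Fock = \Comp u$ of $\E$. First I would record that $\Comp u$ is genuinely a product subsystem: each fibre $\Fockt = \Comp u_t$ is a closed (indeed one-dimensional) subspace of $\Et$, and multiplicativity of the reference unit, $u_{s+t} = u_s \bfcdot u_t$, gives $\BEst(\Focksplust) = \Comp(u_s \ot u_t) = \Focks \ot \Fockt$, so that the hypotheses of Lemma~\ref{lemma: 5.3} are met with this choice of $\Fock$.

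The only point that then needs spelling out is the elementary identity, valid for any closed subspace $X \subset \Es$,
\[
X \ot u_t = X \ot \Comp u_t = X \ot \Fockt,
\]
the first equality holding because $\Comp u_t$ is one-dimensional, so that $\{ x \ot u_t : x \in X \}$ is already a closed linear subspace of $\Es \ot \Et$ (and symmetrically on the other tensor leg). Taking $X = \FFminusperps$ turns the asserted inclusion $\FFminusperps \ot u_t \subset \FFminusperpsplust$ into exactly the first inclusion of Lemma~\ref{lemma: 5.3}(a); taking $X = \FFminusperps \ominus \Focks$ turns $(\FFminusperps \ominus \Focks) \ot u_t \subset \FFminusperpsplust \ominus \Focksplust$ into the first inclusion of Lemma~\ref{lemma: 5.3}(b). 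The two statements having $u_s$ as the left tensor factor follow in the same way from the second inclusions in parts (a) and (b) of that lemma, or equivalently from the left--right symmetry already exploited in its proof.

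I expect no genuine obstacle here: the corollary is a direct reading-off of Lemma~\ref{lemma: 5.3}, once one observes that tensoring against the single vector $u_t$ coincides with tensoring against the whole one-dimensional fibre $\Fockt = \Comp u_t$. The only thing to be careful about is the notational convention that $X \ot u_t$ abbreviates the subspace $\{ x \ot u_t : x \in X \}$; with that in place, no new argument beyond Lemma~\ref{lemma: 5.3} is needed.
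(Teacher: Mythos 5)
Your proposal is correct and matches the paper's intent exactly: the paper states Corollary~\ref{cor: 5.3plus} without a separate proof, precisely because it is the immediate specialisation of Lemma~\ref{lemma: 5.3} to the product subsystem $\Fock = \Comp u$ (which is a subsystem since $u_{s+t} = u_s \bfcdot u_t$), combined with the observation that tensoring with the vector $u_t$ is the same as tensoring with the one-dimensional fibre $\Comp u_t$. Nothing further is needed.
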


\begin{notation}
For a pointed Arveson system $(\E, u)$,
set
\[
\XEut := ( \Comp u_t )^{\ominus \perp} \ominus \Comp u_t
\qquad
(t>0),
\]
and define isometries
\[
\jEust: \XEus \to \XEut,
\quad
x \mapsto x \bfcdot \, u_{t-s}
\qquad
(0<s<t).
\]
Then $\big( (\XEut )_{t>0}, ( \jEurs )_{0<r<s} \big)$
is easily seen to form an inductive system of Hilbert spaces.
Let
$\big( \XEu, ( \jEut : \XEut \to \XEu )_{t>0} \big)$
denote its inductive limit,
and write $x \bfcdot \, u_\infty$ for $\jEut(x)$ ($t>0$, $x\in \XEut$).
Thus
\[
( x \bfcdot \, u_r ) \bfcdot \, u_\infty = x \bfcdot \, u_\infty \in \XEu
\qquad
(r,t>0, x \in \XEut).
\]
Finally, define isometries $( \SEut )_{t>0}$ on $\XEu$
by the requirement
\[
\SEut ( z \bfcdot \, u_\infty ) = u_t \bfcdot \, z \bfcdot \, u_\infty
\qquad
\big( z \in \bigcup\nolimits_{s>0} \XEus \big),
\]
and set $\SEuzero = I_{\XEu}$.
\end{notation}

As usual,
when it is expeditious to do so
we identify
$x \bfcdot \, y$ and $x \ot y = \BEst ( x \bfcdot \, y )$,
for $x \in \Es$, $y \in \Et$ and $s,t>0$.

\begin{lemma}
\label{lemma: XEu}
Let $(\E, u)$ be a pointed Arveson system.
Then
\begin{align*}
&
\XEusplust \ot u_\infty =
\XEus \ot u_\infty +
\SEus ( \XEut \ot u_\infty ), \text{ and }
\\
&
\XEu =
\XEus \ot u_\infty +
\SEus \XEu,
\qquad
(s,t>0).
\end{align*}
\end{lemma}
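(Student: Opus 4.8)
The plan is to reduce both assertions to the single fibrewise identity
\[
\XEusplust = ( \XEus \bfcdot u_t ) \oplus ( u_s \bfcdot \XEut )
\qquad (s,t>0),
\]
viewed as an \emph{orthogonal} direct sum inside $\E_{s+t}$ under the identification $\E_{s+t} = \E_s \ot \E_t$, and then to transport this identity through the inductive limit defining $\XEu$.

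For the fibrewise identity, the inclusions $\XEus \bfcdot u_t \subseteq \XEusplust$ and $u_s \bfcdot \XEut \subseteq \XEusplust$ are exactly the second parts of the two lines of Corollary~\ref{cor: 5.3plus} (taken with $\Fock = \Comp u$, so that $\XEur = \mathcal{F}^{\ominus\perp}_r \ominus \Comp u_r$), while orthogonality of the two summands is immediate from $\XEus \perp \Comp u_s$, since $\ip{ x \ot u_t }{ u_s \ot y } = \ip{ x }{ u_s } \, \ip{ u_t }{ y } = 0$ for $x \in \XEus$ and $y \in \XEut$. The substance is the reverse inclusion. I would decompose $\E_{s+t} = \E_s \ot \E_t$ into nine mutually orthogonal subspaces by tensoring the orthogonal decomposition $\E_r = \FFminusr \oplus \Comp u_r \oplus \XEur$ of each of the two fibres --- legitimate because $\Fock = \Comp u$ is a product subsystem, whence $\Comp u_r \subseteq \mathcal{F}^{\ominus\perp}_r$ by Proposition~\ref{propn: 7.1} and $\mathcal{F}^{\ominus\perp}_r = \Comp u_r \oplus \XEur$. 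One of the nine pieces is $\Comp u_s \ot \Comp u_t = \Comp u_{s+t}$, and I claim that, of the remaining eight, all but the two pieces $\XEus \ot \Comp u_t$ and $\Comp u_s \ot \XEut$ lie in $\FFminussplust$. Indeed, the four pieces comprising $( \E_s \ominus \Comp u_s ) \ot ( \E_t \ominus \Comp u_t )$ do so by taking $r = s$ in the supremum defining $\FFminussplust$; and the pieces $\FFminuss \ot \Comp u_t$ and $\Comp u_s \ot \FFminust$ do so after expanding $\FFminuss = \vee_{0<q<s} ( \E_q \ominus \Comp u_q ) \ot ( \E_{s-q} \ominus \Comp u_{s-q} )$ and $\FFminust = \vee_{0<q<t} ( \E_q \ominus \Comp u_q ) \ot ( \E_{t-q} \ominus \Comp u_{t-q} )$ and using that $( \E_a \ominus \Comp u_a ) \ot \Comp u_b$ and $\Comp u_a \ot ( \E_b \ominus \Comp u_b )$ are contained in $\E_{a+b} \ominus \Comp u_{a+b}$ (both immediate from $u_{a+b} = u_a \bfcdot u_b$), which exhibits them as built from blocks $( \E_r \ominus \Comp u_r ) \ot ( \E_{s+t-r} \ominus \Comp u_{s+t-r} )$ with $0<r<s+t$. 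Since $\XEusplust = \FFminusperpsplust \ominus \Comp u_{s+t} = \E_{s+t} \ominus \FFminussplust \ominus \Comp u_{s+t}$, it follows that $\XEusplust \subseteq ( \XEus \ot \Comp u_t ) \oplus ( \Comp u_s \ot \XEut ) = ( \XEus \bfcdot u_t ) \oplus ( u_s \bfcdot \XEut )$, which together with the two inclusions above gives the fibrewise identity.

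Next I would push this through the inductive limit maps. As $\jmath^{\E,u}_{s+t}$ is an isometry it sends the orthogonal direct sum to an orthogonal direct sum; moreover $\jmath^{\E,u}_{s+t}( x \bfcdot u_t ) = \jmath^{\E,u}_{s+t} \big( \jmath^{\E,u}_{s,s+t}( x ) \big) = \jEus( x ) = x \bfcdot u_\infty$ for $x \in \XEus$, whereas $\jmath^{\E,u}_{s+t}( u_s \bfcdot y ) = \SEus( y \bfcdot u_\infty )$ for $y \in \XEut$ --- the latter being a routine unwinding of the definitions of $\SEu$ and of the inductive limit, using the associativity relation $( u_s \bfcdot y ) \bfcdot u_r = u_s \bfcdot ( y \bfcdot u_r )$. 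Applying $\jmath^{\E,u}_{s+t}$ to the fibrewise identity therefore gives the first displayed assertion, indeed in the sharper form $\XEusplust \ot u_\infty = ( \XEus \ot u_\infty ) \oplus \SEus( \XEut \ot u_\infty )$. For the second assertion I would use minimality of the inductive limit in the form $\XEu = \ol{ \bigcup_{T>0} ( \XEu_T \ot u_\infty ) }$ (the ranges increasing in $T$): for $T \les s$ we have $\XEu_T \ot u_\infty \subseteq \XEus \ot u_\infty$, while for $T>s$ the first assertion gives $\XEu_T \ot u_\infty = ( \XEus \ot u_\infty ) \oplus \SEus( \XEu_{T-s} \ot u_\infty ) \subseteq ( \XEus \ot u_\infty ) + \SEus \XEu$; hence $\XEu \subseteq \ol{ ( \XEus \ot u_\infty ) + \SEus \XEu }$, and the reverse inclusion is immediate. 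Finally $\XEus \ot u_\infty$ and $\SEus \XEu$ are each closed (ranges of isometries) and mutually orthogonal --- once more because $\XEus \perp \Comp u_s$ fibrewise --- so their sum is already closed and the closure is superfluous, which gives the second assertion.

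The step I expect to be the main obstacle is the reverse inclusion in the fibrewise identity: organising the nine-piece tensor decomposition cleanly and verifying that the seven unwanted pieces are absorbed into $\FFminussplust \oplus \Comp u_{s+t}$. Everything downstream --- transporting through the inductive limit, and checking closedness via orthogonality --- is routine.
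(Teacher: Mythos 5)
Your proof is correct and follows essentially the same route as the paper: establish the fibrewise orthogonal decomposition $\XEusplust = (\XEus \bfcdot u_t) \op (u_s \bfcdot \XEut)$, using Corollary~\ref{cor: 5.3plus} for the easy containment, and then transport it through the inductive limit maps. The only (harmless) difference is organisational: for the hard containment the paper invokes Proposition~\ref{propn: 7.1} to get $\XEusplust \subset \XEus \ot u_t \op u_s \ot \XEut \op \XEus \ot \XEut$ and then kills the cross term via the $r=s$ block of $\FFminussplust$, whereas your nine-piece decomposition re-derives inline the absorptions $\FFminuss \ot \Comp u_t,\ \Comp u_s \ot \FFminust \subset \FFminussplust$ already contained in the proof of Proposition~\ref{propn: 5.6}, and your explicit treatment of the passage to the limit and of closedness of the sums fills in what the paper leaves as ``the second follows from the first''.
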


\begin{proof}
We drop the superscripts.
Let $s,t>0$
and set $\F = \Comp u$.
Then,
by Proposition~\ref{propn: 7.1},
\begin{align*}
X_{s+t} =
\FFminusperpsplust \ominus \Comp u_{s+t}
&\subset
( \FFminusperps \ot \FFminusperpt ) \ominus \Comp ( u_s \ot u_t )
=
X_s \ot u_t \, \op \, u_s \ot X_t \, \op \, X_s \ot X_t,
\end{align*}
but
\[
X_s \ot X_t \subset
\{ u_s \}^\perp \ot \{ u_t \}^\perp \subset
\FFminussplust \subset
X_{s+t}^\perp,
\]
so $X_{s+t} \subset X_s \ot u_t \op u_s \ot X_t$.
The reverse inclusion also holds since
\begin{align*}
X_s \ot u_t \, \op \, u_s \ot X_t
&=
( \FFminusperps \ominus \Comp u_s ) \ot u_t \, \op \, u_s \ot ( \FFminusperpt \ominus \Comp u_t )
\\
&=
( \FFminusperps \ot u_t \, \op \, u_s \ot  \FFminusperpt )\ominus \Comp u_{s+t}
\subset
\FFminusperpsplust \ominus \Comp u_{s+t}
=
X_{s+t}.
\end{align*}
The first identity follows. The second follows from the first.
\end{proof}

\begin{lemma}
\label{lemma: SEu}
Let $(\E, u)$ be a pointed Arveson system.
Then $\SEu := ( \SEut )_{t \ges 0}$
is a strongly continuous one-parameter semigroup of isometries.
Moreover it is purely isometric.
\end{lemma}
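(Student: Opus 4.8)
The plan is to verify, in turn, the four ingredients of the statement: (1) each $\SEut$ is a well-defined isometry of $\XEu$; (2) $\SEut \SEus = S^{\E,u}_{s+t}$ with $\SEuzero = I$; (3) strong continuity; and (4) absence of a unitary part. Throughout I would work on the dense linear span $D$ of $\bigcup_{s>0}\jEus(\XEus)$ inside $\XEu$, using repeatedly that any finite family of generators can be represented at a common level $r>0$, i.e. in the form $\jmath^{\E,u}_r(w)$ with $w \in X^{\E,u}_r$, via the connecting isometries of the inductive system.

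For (1): at a common level the prescription reads $\SEut(\jmath^{\E,u}_r w) = \jmath^{\E,u}_{r+t}(u_t \bfcdot w)$, which makes well-definedness and linearity transparent; here $u_t \bfcdot w$ genuinely lies in $X^{\E,u}_{r+t}$ by Corollary~\ref{cor: 5.3plus}, and $\norm{\SEut(\jmath^{\E,u}_r w)} = \norm{u_t \bfcdot w} = \norm{u_t}\,\norm{w} = \norm{w}$ because $u$ is normalised, so $\SEut$ is isometric on $D$ and extends uniquely to an isometry of $\XEu$. Property (2) is then immediate on $D$ from associativity of the structure maps and the unit relation $u_s \bfcdot u_t = u_{s+t}$, and passes to $\XEu$ by boundedness and density; $\SEuzero = I$ holds by definition.

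The main obstacle is (3), strong continuity. I would first reduce to right-continuity at $0$ (for $t<t_0$ use $\SEu_{t_0} = \SEut \SEu_{t_0-t}$ and that $\SEut$ is isometric; for $t>t_0$ use the semigroup law and boundedness of $\SEu_{t_0}$), so it suffices to show $\SEut(\jEus z) \to \jEus z$ as $t \downarrow 0$ for $z \in \XEus$. The trick is to push the comparison into a single fixed fibre: fixing $T>s$ and $0<t<T-s$, both $z \bfcdot u_{T-s}$ and $u_t \bfcdot z \bfcdot u_{T-s-t}$ lie in $X^{\E,u}_T \subseteq \ET$ by Corollary~\ref{cor: 5.3plus}, and are respectively the level-$T$ representatives of $\jEus z$ and of $\SEut(\jEus z)$, so $\norm{\SEut(\jEus z) - \jEus z}_{\XEu} = \norm{u_t \bfcdot z \bfcdot u_{T-s-t} - z \bfcdot u_{T-s}}_{\ET}$. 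Writing $z \bfcdot u_{T-s} = (z\bfcdot u_{T-s-t}) \bfcdot u_t$ and unwinding the definition of the flip group (Definition~\ref{defn: 1.2}) gives $u_t \bfcdot z \bfcdot u_{T-s-t} = \UETt(z\bfcdot u_{T-s})$, so the displayed quantity equals $\norm{(\UETt - U^{\E,T}_0)(z \bfcdot u_{T-s})}_{\ET}$. Since $\E$ is an Arveson system, $\UET$ is a strongly continuous group with $U^{\E,T}_0 = I$ (Theorem~\ref{thm: Lieb 7.7}), so this tends to $0$ as $t \downarrow 0$, establishing (3).

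Finally, for (4): a $C_0$-semigroup of isometries is purely isometric exactly when $\bigcap_{t>0}\Ran\SEut = \{0\}$. From Lemma~\ref{lemma: XEu}, $\XEu = \XEus \bfcdot u_\infty + \SEus\,\XEu$, and these two closed subspaces are orthogonal: for $z \in \XEus$ and $w \in X^{\E,u}_r$, comparing level-$(s+r)$ representatives gives $\ip{z \bfcdot u_r}{u_s \bfcdot w} = \ip{z}{u_s}\,\ip{u_r}{w} = 0$ since $z \perp u_s$. Hence $\Ran\SEus = \big(\jEus(\XEus)\big)^\perp$ within $\XEu$, and intersecting over $s>0$, together with the minimality $\XEu = \bigvee_{s>0}\Ran\jEus$ of the inductive limit, yields $\bigcap_{s>0}\Ran\SEus = \big(\bigvee_{s>0}\Ran\jEus\big)^\perp = \{0\}$. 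I expect step (3), the flip-group argument, to be the only genuinely non-formal point; steps (1), (2) and (4) are bookkeeping with the inductive limit and Corollary~\ref{cor: 5.3plus}.
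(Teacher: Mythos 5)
Your proposal is correct and follows essentially the same route as the paper: both the semigroup/isometry verifications and the pure isometry rest on the same inductive-limit bookkeeping and the orthogonality $\Ran \SEut \perp \Ran \jEut$ (yours slightly strengthened to an equality via Lemma~\ref{lemma: XEu}), and continuity rests on the strong continuity of the flip group $\UET$. The only (harmless) difference is that you obtain strong continuity directly, by identifying $\SEut \jEus z - \jEus z$ with $(\UETt - I)(z \bfcdot u_{T-s})$ inside the fixed fibre $\ET$, whereas the paper first proves weak continuity via inner products against generators and then invokes the standard fact that weak continuity implies strong continuity for one-parameter semigroups.
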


\begin{proof}
Clearly
$\SEu$
is a one-parameter semigroup of isometries.
Let $x \in \XEup$ and $y \in \XEuq$ where $p,q>0$.
Fix $T>0$ such that $T > \max \{ p, q+1 \}$.
Then, for $0 \les t \les 1$,
\begin{align*}
&
\ip{ x \ot u_\infty }{ u_t \ot y \ot u_\infty }
=
\ip{ x \ot u_{T-p} }{ u_t \ot y \ot u_{T-q-t} }
=
\ip{ x \ot u_{T-p} }{ \UETt ( y \ot u_{T-q} ) }
\end{align*}
where $\UET = ( \UETt )_{t \in \Real}$ is the unitary flip group on $\ET$
defined in Definition~\ref{defn: 1.2}.
Weak continuity of the semigroup $\SEu$ therefore follows
from the strong continuity of $\UET$.
Since weak continuity implies strong continuity for
one-parameter semigroups on Banach spaces,
the first part follows.

For the last part, let $s,t>0$.
Then
\[
u_t \ot z \ot u_\infty \perp x \ot u_s \ot u_\infty = x \ot u_\infty
\qquad
( z \in \XEus, x \in \XEut ).
\]
It follows that $\Ran \SEut \perp \Ran \jEut$ ($t>0$),
so
\[
\bigcap_{t>0} \Ran \SEut \subset
\bigcap_{t>0} ( \Ran \jEut )^\perp =
\Big( \bigcup_{t>0} \Ran \jEut \Big)^\perp = \{ 0 \},
\]
and therefore $\SEu$ is purely isometric.
\end{proof}

By Cooper's Theorem (\cite{Cooper};
see Theorem 9.3, Chapter III of~\cite{NaF-harmonic}),
it follows from Lemma~\ref{lemma: SEu} that,
for any pointed Arveson system $(\E, u)$
there is a Hilbert space $\kEu$ and unitary operator
$\VEu: \XEu \to \KEu:= L^2( \Rplus; \kEu)$
such that $\VEu \SEut = S^{\kEu}_t \VEu$ ($t \ges 0$).
Moreover $\kEu$ is separable since $\XEu$ is.

Recall our notation
$\KEut := \big\{ g \in \KEu: \esssupp g \subset [0,t] \big\}$ ($t>0$).

\begin{lemma}
\label{lemma: cluster inclusion}
Let $(\E, u)$ be a pointed Arveson system.
Set $\FEu = ( \FFminusperpt )_{t>0}$
where $\Fock = \Comp u$.
For $t>0$, define the operator
\[
\phiEut:
\FEut = \Ft \op \XEut \to F^{\kEu}_t = \Comp \op \KEut,
\quad
\lambda u_t + x \mapsto ( \lambda, J_t^* \VEu x \bfcdot \, u_\infty )
\]
where
$J_t$ denotes the inclusion map $\KEut \to \KEu$.
Then
$\phiEu = ( \phiEut )_{t>0}$ is an isomorphism of inclusion systems.
\end{lemma}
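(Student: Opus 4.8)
The plan is to check two things: that each $\phiEut$ is unitary, and that the family $\phiEu=(\phiEut)_{t>0}$ intertwines the structure maps of $\FEu$ and of $F^{\kEu}$ (quasicontractivity is then automatic, each $\phiEut$ having norm $1$). Throughout we use that, with $\Fock=\Comp u$, one has $\Ft=\Comp u_t$, so $\FEut=\FFminusperpt=\Ft\op\XEut$ is an orthogonal direct sum and, under these decompositions, $\phiEut$ acts as $\id_\Comp$ on $\Comp u_t$ and as $J_t^*\,\VEu\,\jEut$ on $\XEut$.

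For unitarity it suffices to show $\VEu\,\jEut$ maps $\XEut$ onto $\KEut$, since then $J_t^*\,\VEu\,\jEut$ is a unitary $\XEut\to\KEut$ and hence $\phiEut$ is unitary onto $\Comp\op\KEut=F^{\kEu}_t$. The map $\VEu\,\jEut$ is isometric because $\jEut$ is, so only surjectivity needs work. I would combine the second identity of Lemma~\ref{lemma: XEu}, namely $\XEu=\XEut\ot u_\infty+\SEut\XEu$ (that is, $\XEu=\Ran\jEut+\Ran\SEut$), with the orthogonality $\Ran\SEut\perp\Ran\jEut$ recorded in the proof of Lemma~\ref{lemma: SEu}, to get the orthogonal decomposition $\XEu=\Ran\jEut\oplus\Ran\SEut$. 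Applying the unitary $\VEu$ and the intertwining $\VEu\,\SEut=S^{\kEu}_t\,\VEu$ turns this into $\KEu=\VEu(\Ran\jEut)\oplus\Ran S^{\kEu}_t$; since the right shift semigroup on $L^2(\Rplus;\kEu)$ satisfies $(\Ran S^{\kEu}_t)^\perp=\KEut$, we conclude $\VEu(\Ran\jEut)=\KEut$, as required.

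For the intertwining relation, fix $s,t>0$ and $w=\lambda u_{s+t}+x\in\Fsplust\op\XEusplust$, and write $\beta^{\FEu}_{s,t}$ for the compression of $\BEst$ to $\FFminusperpsplust$ furnished by Proposition~\ref{propn: 7.1}. The proof of Lemma~\ref{lemma: XEu} shows $\BEst(\XEusplust)\subset\XEus\ot u_t\op u_s\ot\XEut$, so $x=y\bfcdot\, u_t+u_s\bfcdot\, z$ for uniquely determined $y\in\XEus$, $z\in\XEut$, whence $\BEst w=\lambda\, u_s\ot u_t+y\ot u_t+u_s\ot z$. Using $\phi^{\E,u}_s(u_s)=\Vac_s$ and $\phi^{\E,u}_s(y)=(0,\VEu(y\bfcdot\, u_\infty))$ (and the corresponding facts at $t$), the right-hand side $(\phi^{\E,u}_s\ot\phi^{\E,u}_t)\beta^{\FEu}_{s,t}w$ equals
\[
\lambda\,\Vac_s\ot\Vac_t+\big(0,\VEu(y\bfcdot\, u_\infty)\big)\ot\Vac_t+\Vac_s\ot\big(0,\VEu(z\bfcdot\, u_\infty)\big),
\]
while, setting $g:=\VEu(x\bfcdot\, u_\infty)$ so that $\phi^{\E,u}_{s+t}w=(\lambda,g)$, the definition of the Fock inclusion structure maps gives
\[
\beta^{F^{\kEu}}_{s,t}(\lambda,g)=\lambda\,\Vac_s\ot\Vac_t+\big(0,g_{[0,s[}\big)\ot\Vac_t+\Vac_s\ot\big(0,(S^{\kEu}_s)^*g_{[s,s+t[}\big).
\]
It remains to identify $g_{[0,s[}$ with $\VEu(y\bfcdot\, u_\infty)$ and $(S^{\kEu}_s)^*g_{[s,s+t[}$ with $\VEu(z\bfcdot\, u_\infty)$.

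Both identifications follow from the single computation $x\bfcdot\, u_\infty=y\bfcdot\, u_\infty+\SEus(z\bfcdot\, u_\infty)$, obtained from $x=y\bfcdot\, u_t+u_s\bfcdot\, z$ via $(y\bfcdot\, u_t)\bfcdot\, u_\infty=y\bfcdot\, u_\infty$ and the definition of $\SEus$: applying $\VEu$ together with $\VEu\,\SEus=S^{\kEu}_s\,\VEu$ gives $g=\VEu(y\bfcdot\, u_\infty)+S^{\kEu}_s\VEu(z\bfcdot\, u_\infty)$. By the unitarity step applied at $s$ and at $t$, $\VEu(y\bfcdot\, u_\infty)$ is supported in $[0,s]$ and $\VEu(z\bfcdot\, u_\infty)$ in $[0,t]$, so $S^{\kEu}_s\VEu(z\bfcdot\, u_\infty)$ is supported in $[s,s+t]$; reading off the parts of $g$ on $[0,s[$ and on $[s,s+t[$ and using $(S^{\kEu}_s)^*S^{\kEu}_s=I$ yields the two identities, so $\phiEu$ is an isomorphism of inclusion systems. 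The main obstacle is the surjectivity step in the unitarity argument — recognising, via Cooper's theorem, that the inductive-limit embeddings $\jEut$ correspond under $\VEu$ exactly to the subspaces $\KEut$ of functions supported on $[0,t]$ — together with the support bookkeeping in the last paragraph that makes the Fock structure map fall out; everything else is routine once the decomposition $x=y\bfcdot\, u_t+u_s\bfcdot\, z$ is in hand.
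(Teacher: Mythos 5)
Your proof is correct and takes essentially the same route as the paper's: the heart of both arguments is the decomposition $x = y \bfcdot u_t + u_s \bfcdot z$ coming from Lemma~\ref{lemma: XEu} (equivalently $F_{s+t} = \Comp u_{s+t} \op \jmath^{\E,u\,*}_{s+t}\big(\jEus(\XEus) + \SEus\,\jEut(\XEut)\big)$), pushed through $\VEu$ using $\VEu \SEus = S^{\kEu}_s \VEu$ and compared with the Fock inclusion structure map. The only difference is that you spell out the unitarity of each $\phiEut$ (via the orthogonal decomposition $\XEu = \Ran \jEut \oplus \Ran \SEut$ and the identification $\VEu(\Ran \jEut) = \KEut$) and the support bookkeeping, details the paper leaves as ``easily seen''.
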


\begin{proof}
Drop the superscripts from
$\FEur$, $\KEur$, $\XEur$, $\phiEur$, $\jEur$, $\SEur$ ($r>0$)
and
$\VEu$,
and abbreviate $\kEu$ to $\kil$.

Each operator $\phi_t$ is easily seen to be unitary.
Fix $s,t>0$.
Then
\begin{align*}
\big( \betaFkst \circ \phi_{s+t} \big) ( u_{s+t} ) =
\betaFkst (1,0)
&=
(1,0) \ot (1,0)
\\
&=
\phi_{s} ( u_{s} ) \ot \phi_{t} ( u_{t} ) =
( \phi_s \ot \phi_t ) ( \betaFst  u_{s+t} ).
\end{align*}
Also,
if
$z
=
x_s \bfcdot \, u_t + u_s \bfcdot \, x_t
=
\jmath_{s+t}^* \big( \jmath_s( x_s ) + S_s \jmath_t( x_t ) \big)
$
where
$x_s \in X_s$ and $x_t \in X_t$,
then
\begin{align*}
\big( \betaFkst \circ \phi_{s+t} \big) (z)
&=
\betaFkst \big( 0, J_{s+t}^* V ( x_s \bfcdot \, u_\infty + S_s ( x_t \bfcdot \, u_\infty )) \big)
\\
&=
\betaFkst \big( 0, J_{s+t}^* ( V ( x_s \bfcdot \, u_\infty ) + S^\kil_s V ( x_t \bfcdot \, u_\infty ) ) \big)
\\
&=
( 0, J_{s}^* V ( x_s \bfcdot \, u_\infty ) ) \ot (1,0) + (1,0) \ot ( 0, J_{t}^* V ( x_t \bfcdot \, u_\infty ) )
\\
&=
 \phi_s ( x_s ) \ot \phi_t ( u_t ) +  \phi_s ( u_s ) \ot \phi_t ( x_t )
=
( \phi_s \ot \phi_t ) ( \betaFst z ).
\end{align*}
Since
$F_{s+t} = \Comp u_{s+t} \op \jmath_{s+t}^* \big( \jmath_s ( X_s ) + S_s \jmath_t ( X_t ) \big)$,
it follows that
$\betaFkst \circ \phi_{s+t} = ( \phi_s \ot \phi_t ) \circ \betaFst$.
Therefore
$\phi$ is an isomorphism of inclusion systems.
\end{proof}

\begin{theorem}
\label{thm: 5.5}
Let $\E$ be a spatial Arveson system.
Then, for any normalised unit $u$ of $\E$,
\[
( \Comp u )\check{} = \EI.
\]
\end{theorem}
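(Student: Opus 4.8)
The plan is to establish the two inclusions $(\Comp u)\check{} \subseteq \EI$ and $\EI \subseteq (\Comp u)\check{}$ separately, using Lemma~\ref{lemma: cluster inclusion} for the first and Theorem~\ref{thm: 3.7}(a) for the second. Throughout set $\Fock = \Comp u$, so that, by Definition~\ref{defn: 6.2}, $(\Comp u)\check{}$ is the product system generated by the inclusion subsystem $\FEu = \big(\FFminusperpt\big)_{t>0}$ of $\E$ (this is an inclusion subsystem by Proposition~\ref{propn: 7.1}). By the construction recalled in Section~\ref{section: inclusion systems}, this generated product system is a product subsystem of $\E$, and hence, by Theorem~\ref{thm: always}, an Arveson subsystem of $\E$.

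For the inclusion $(\Comp u)\check{} \subseteq \EI$, I would first argue that $(\Comp u)\check{}$ is of type I. Lemma~\ref{lemma: cluster inclusion} gives an isomorphism of inclusion systems $\phiEu : \FEu \to F^{\kEu}$; by Remark~(iv) following Theorem~\ref{thm: 2.13} this lifts to an isomorphism of the generated product systems, which, by Corollary~\ref{cor: Fk gen}, is an isomorphism of product systems $(\Comp u)\check{} \to \Fock^{\kEu}$. Since both are Arveson systems, Theorem~\ref{thm: 7.16} upgrades it to an isomorphism of Arveson systems, so $(\Comp u)\check{}$ is of type I. Now every unit of a product subsystem $\G$ of $\E$ is a unit of $\E$ (its factorisation is the restriction of that of $\E$, and its measurable structure is the one induced by Theorem~\ref{thm: always}); consequently a type I product subsystem of $\E$, being generated by its own units, is contained in $\EI$. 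Taking $\G = (\Comp u)\check{}$ yields $(\Comp u)\check{} \subseteq \EI$.

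For the reverse inclusion, Theorem~\ref{thm: 3.7}(a) says that $\EI$ is the product subsystem of $\E$ generated by $u$ together with all of its roots. Since $(\Comp u)\check{}$ is a product subsystem of $\E$, it suffices to show that $u$ and every $a \in \RootEu$ are sections of $(\Comp u)\check{}$, equivalently --- because $\FEu \supseteq \Comp u$ by Proposition~\ref{propn: 7.1} --- that $u_t, a_t \in \FFminusperpt = \Et \ominus \FFminust$ for all $t>0$, where $\FFminust = \bigvee_{0<r<t} \{u_r\}^\perp \ot \{u_{t-r}\}^\perp$. For $u$ this is immediate. For a root $a$ and fixed $t>0$, the addit relation gives, for $0<r<t$ and under the identification $\Et \cong \Er \ot \Etminusr$, that $a_t = a_r \ot u_{t-r} + u_r \ot a_{t-r}$, whence for $\xi \in \{u_r\}^\perp$ and $\eta \in \{u_{t-r}\}^\perp$ one has $\ip{\xi \ot \eta}{a_t} = \ip{\xi}{a_r}\,\ip{\eta}{u_{t-r}} + \ip{\xi}{u_r}\,\ip{\eta}{a_{t-r}} = 0$. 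Hence $a_t \perp \{u_r\}^\perp \ot \{u_{t-r}\}^\perp$ for every $0<r<t$, so $a_t \perp \FFminust$ and $a_t \in \FFminusperpt$. Therefore $\EI \subseteq (\Comp u)\check{}$, and combining the two inclusions gives $(\Comp u)\check{} = \EI$.

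The analytic work has all been front-loaded into Lemmas~\ref{lemma: XEu}--\ref{lemma: cluster inclusion} and Cooper's theorem, so no serious obstacle remains; the one point needing care is the first inclusion, where one must thread Lemma~\ref{lemma: cluster inclusion}, Remark~(iv) after Theorem~\ref{thm: 2.13}, Corollary~\ref{cor: Fk gen} and Theorem~\ref{thm: 7.16} to conclude that $(\Comp u)\check{}$ is a genuine type I Arveson subsystem of $\E$, and then invoke the elementary fact that type I product subsystems of an Arveson system lie inside its type I part. As a byproduct one recovers $\kEu \cong \kil(\E)$, and hence $\dim \kEu = \ind \E$.
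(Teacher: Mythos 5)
Your proposal is correct and follows essentially the same route as the paper's proof: the inclusion $(\Comp u)\check{}\subseteq\EI$ via Lemma~\ref{lemma: cluster inclusion}, the lifting of morphisms and Theorem~\ref{thm: 7.16} to see that the cluster is type I, and the reverse inclusion via Theorem~\ref{thm: 3.7}(a) by checking that $u_t$ and every root value $a_t$ lie in $\FFminusperpt$. Your inner-product verification that $a_t\perp\{u_r\}^\perp\ot\{u_{t-r}\}^\perp$ is just an elementwise rendering of the paper's subspace argument, so there is no substantive difference.
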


\begin{proof}
Let $u \in \UnitEnorm$ and set $\F = \Comp u$.

The isomorphism of inclusion systems $\phiEu$,
defined in Lemma~\ref{lemma: cluster inclusion},
 lifts to an isomorphism of product systems
$\psi: \F\,\check{} \to \Fock^{\kEu}$.
Theorems~\ref{thm: 7.16} and~\ref{thm: always}
imply
 that $\psi$ is an isomorphism of Arveson systems.
Thus $\F\,\check{}$ is of type I, and so is contained in $\EI$.

Now let $a \in \RootEu$ and $t>0$.
Then
\begin{align*}
a_t
&=
a_r \ot u_{t-r} + u_r \ot a_{t-r}
\in
\FFperpr \ot \F_{t-r} \, \op \, \F_r \ot \FFperptminusr
\subset
\big( \FFperpr \ot \FFperptminusr \big)^\perp
\qquad
(0 < r < t),
\end{align*}
so $a_t \in \FFminusperpt$.
By Theorem~\ref{thm: 3.7},
the product subsystem of $\E$ generated by $u$ and all of its roots is $\EI$,
therefore $\F\,\check{}$ contains $\EI$.
The result follows.
\end{proof}

Before turning to its connection with the Cantor--Bendixson derivative
applied to random closed sets (in the closed unit interval),
we briefly mention
a natural generalisation of our cluster construction.
For an ordered pair of inclusion subsystems $F = (F^1, F^2)$
of an Arveson system $\E$, and $t>0$, set
\[
\Fminusperpt := \Et \ominus \Fminust
\ \text{ where } \
\Fminust :=
\vee_{0<r<t} ( \Er \ominus F^1_r ) \ot ( \Etminusr \ominus F^2_{t-r} ).
\]
this extends the earlier construction
(for a single inclusion subsystem $F$ of $\E$)
as follows:
\[
(F,F)^{\ominus \perp}_t = \Fminusperpt
\qquad
(t>0).
\]

\begin{proposition}
\label{propn: 5.6}
Let $F = (F^1, F^2)$ be an ordered pair of inclusion subsystems
of an Arveson system $\E$.
Then $\Fminusperp := \big( \Fminusperpt \big)_{t>0}$
is an inclusion subsystem of $\E$ containing $F^1$ and $F^2$.
\end{proposition}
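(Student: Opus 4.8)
The plan is to verify the two defining properties of an inclusion subsystem directly, using only that $F^1$ and $F^2$ are each inclusion subsystems of $\E$ together with the unitary identifications $\Esplust = \Es \ot \Et$ (and their associative consequences) furnished by the structure maps of the product system $\E$, which I would suppress throughout.

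First I would show $F^1_t, F^2_t \subset \Fminusperpt$ for every $t>0$; equivalently, that $F^1_t$ and $F^2_t$ are orthogonal to each summand $(\Er \ominus F^1_r) \ot (\Etminusr \ominus F^2_{t-r})$ with $0<r<t$. Since $F^1$ is an inclusion subsystem, $\betaE_{r,t-r}$ carries $F^1_t$ into $F^1_r \ot F^1_{t-r}$, and $F^1_r \perp (\Er \ominus F^1_r)$ yields orthogonality on the first leg; symmetrically $F^2_t$ lands in $F^2_r \ot F^2_{t-r}$ and $F^2_{t-r} \perp (\Etminusr \ominus F^2_{t-r})$ yields orthogonality on the second. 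Joining over $r$ gives $F^1_t, F^2_t \perp \Fminust$, so both fibres sit in $\Fminusperpt$; once $\Fminusperp$ is known to be an inclusion subsystem, this makes $F^1$ and $F^2$ inclusion subsystems of it.

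The substantive point is that $\betaEst$ maps $\Fminusperpsplust$ into $\Fminusperps \ot \Fminusperpt$; the compressed structure maps are then automatically isometric (being restrictions of unitaries) and inherit the consistency identity from $\E$. Since $\betaEst$ is unitary, this inclusion is equivalent, after passing to orthogonal complements in $\Es \ot \Et = \Esplust$, to $(\Fminusperps \ot \Fminusperpt)^\perp \subset \Fminussplust$; and since $(\Fminusperps \ot \Fminusperpt)^\perp = (\Fminuss \ot \Et) \op (\Fminusperps \ot \Fminust)$ is contained in $(\Fminuss \ot \Et) + (\Es \ot \Fminust)$, it suffices to prove the two inclusions $\Fminuss \ot \Et \subset \Fminussplust$ and $\Es \ot \Fminust \subset \Fminussplust$ inside $\Esplust$. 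For the first I would fix $0<p<s$ and absorb the trailing $\Et$-factor: as $F^2$ is an inclusion subsystem, $F^2_{s+t-p} \subset F^2_{s-p} \ot F^2_t$, so orthogonality on the first leg gives $(\mathcal{E}_{s-p} \ominus F^2_{s-p}) \ot \Et \subset \mathcal{E}_{s+t-p} \ominus F^2_{s+t-p}$; tensoring on the left by $\mathcal{E}_p \ominus F^1_p$ then places the corresponding summand of $\Fminuss \ot \Et$ inside the $r=p$ summand of $\Fminussplust$. For the second I would fix $0<q<t$ and absorb the leading $\Es$-factor: $F^1_{s+q} \subset F^1_s \ot F^1_q$ gives, by orthogonality on the second leg, $\Es \ot (\mathcal{E}_q \ominus F^1_q) \subset \mathcal{E}_{s+q} \ominus F^1_{s+q}$; tensoring on the right by $\mathcal{E}_{t-q} \ominus F^2_{t-q}$ places the corresponding summand of $\Es \ot \Fminust$ inside the $r=s+q$ summand of $\Fminussplust$. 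Joining over $p$, respectively $q$, completes both inclusions and hence the proof.

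I do not expect a genuine obstacle: this is the mechanism already used in Lemma~\ref{lemma: 5.3}, carried out once and for all with $F^1$ on the left leg and $F^2$ on the right. The only care needed is bookkeeping — the inclusion-subsystem property of $F^1$ and of $F^2$ must be invoked on the two legs separately, and one must track which join index is being hit, namely $r=p$ when a right factor $\Et$ is absorbed and $r=s+q$ when a left factor $\Es$ is absorbed.
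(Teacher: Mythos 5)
Your proposal is correct and follows essentially the same route as the paper: both arguments reduce everything to the two inclusions $\Es \ot \Fminust \subset \Fminussplust$ and $\Fminuss \ot \Et \subset \Fminussplust$, proved summand-by-summand by absorbing the extra tensor factor using the inclusion-subsystem property of $F^1$ on the left leg and $F^2$ on the right leg, and then pass to orthogonal complements (the paper phrases this last step as $\Fminusperpsplust \subset (\Es \ot \Fminusperpt) \cap (\Fminusperps \ot \Et) = \Fminusperps \ot \Fminusperpt$, which is just the dual form of your decomposition of $(\Fminusperps \ot \Fminusperpt)^\perp$). Your treatment of the containments $F^1_t, F^2_t \subset \Fminusperpt$ likewise matches the paper's.
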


\begin{proof}
Let $s,t>0$.
For $0 < r < t$,
\[
( F^1_r )^\perp \ot ( F^2_{t-r} )^\perp
\subset
( F^1_r )^\perp \ot \E_{t-r}
\subset
( F^1_r  \ot F^1_{t-r} )^\perp
\subset
( F^1_t )^\perp,
\]
so
$\Fminust \subset ( F^1_t )^\perp$,
thus
$F^1_t \subset \Fminusperpt$;
also
\[
\Es \ot ( F^1_r )^\perp \subset
( F^1_s  \ot F^1_{r} )^\perp \subset
( F^1_{s+r} )^\perp,
\]
so
\[
\Es \ot ( F^1_r )^\perp \ot ( F^2_{t-r} )^\perp
\subset
( F^1_{s+r} )^\perp \ot ( F^2_{t-r} )^\perp
\subset
\Fminussplust,
\]
thus
$\Es \ot \Fminust \subset \Fminussplust$.

By symmetry,
$F^2_t \subset \Fminusperpt$
and
$\Fminuss \ot \Et \subset \Fminussplust$.
Therfore
\[
\Fminusperpsplust \subset
( \Es \ot \Fminusperpt ) \cap ( \Fminusperps \ot \Et )
=
\Fminusperps \ot \Fminusperpt.
\]
It follows that
$\Fminusperp$ is an inclusion system containing $F^1$ and $F^2$.
\end{proof}

This completes the treatment of our cluster construction for product subsystems.
In order to relate it to random closed sets
we summarise the basic relevant properties of hyperspaces next.
Thus let $X$ be a topological space.
The Vietoris topology on $K(X)$, the collection of compact subsets of $X$,
has
$\{ H_U: U \text{ open in } X \}
\cup
\{ M_F: F \text{ closed in } X \}$
as sub-base
(\cite{Kecheris});
the \emph{hit sets} and \emph{miss sets} of $K(X)$ being defined as follows:
\[
H_A := \big\{ Z \in K(X): Z \cap A \neq \emptyset \big\}
\ \text{ and } \
M_A := \big\{ Z \in K(X): Z \cap A = \emptyset \big\}
\qquad
(A \subset X).
\]
Note that,
for $A,B \subset X$ and $\mathcal{A} \subset \mathcal{P}(X)$,
the following hold:
$
\{ Z \in K(X): Z \subset A \} = M_{A^{\complement}}
$,
\begin{subequations}
\label{eqn: 6.0}
\begin{align}
\label{eqn: 6.0a}
&
M_A = ( H_A )^\complement,
\
H_{\bigcup \mathcal{A}} = \bigcup\nolimits_{A \in \mathcal{A}} H_A,
\
H_\emptyset = \emptyset
\text{ and }
\{ \emptyset \} = M_X,
\
\text{ so }
\\
\label{eqn: 6.0b}
&
A \subset B \implies H_A \subset H_B,
\
M_{\bigcup \mathcal{A}} = \bigcap\nolimits_{A \in \mathcal{A}} M_A,
\
M_\emptyset = K(X)
\text{ and }
\{ \emptyset \} = ( H_X )^\complement.
\end{align}
\end{subequations}
Thus
$\emptyset$ is an isolated point of $K(X)$,
 and
a nonempty basic open set of $K(X)$ takes the form
$
B =
M_F \cap H_{U_1} \cap \cdots \cap H_{U_n}
$
for some set $F$ closed in $X$,
$n \in \Nat$ and
sets $U_1, \cdots , U_n$ open in $X$
such that
$F^\complement \cap U_i \neq \emptyset$ for $i = 1, \cdots , n$.
Note also that,
for a sequence $(F_n)$ of closed sets of $X$,
\begin{equation}
\label{eqn: FnF}
F_n \downarrow F
\implies
M_F = \bigcup_{n=1}^\infty M_{F_n}.
\end{equation}

For any dense subset $D$ of $X$,
$K_{00}(X) \cap \mathcal{P}(D)$ is dense in $K(X)$,
where
$K_{00}(X)$
denotes the collection of subsets of $X$ having finite cardinality.
If $X$ has compatible metric $d$
(with diameter at most one)
then the induced Hausdorff metric $d_{\Hausdorff}$ on $K(X)$
(for which
$d_{\Hausdorff} (Z, \emptyset) = 1 = d_{\Hausdorff}( \emptyset, Z)$
for all $Z \in K(X) \setminus \{ \emptyset \}$)
is compatible with the Vietoris topology,
and is complete if $d$ is.
If $\varepsilon>0$ and $F \subset \subset X$ is an $\varepsilon$-net
(\cite{Sutherland}, Definition 7.2.8)
with respect to a compatible metric $d$ for $X$
with diameter at most one,
then $\mathcal{P}(F)$ is an $\varepsilon$-net for $d_{\Hausdorff}$,
so
$K(X)$ is totally bounded with respect to $d_{\Hausdorff}$
if
$X$ is totally bounded with respect to $d$.
It follows from these basic facts that
$K_{00}(X)$ is dense in $K(X)$,
and $K(X)$ is
separable,
 metrisable,
completely metrisable,
Polish,
or
compact metrisable, if $X$ has that property.
When $X$ is compact Hausdorff
(so that $K(X)$ equals the collection of closed subsets of $X$),
the Vietoris topology coincides with another well-known hyperspace topology,
namely the Fell topology.

For a subset $A$ of $X$ we denote
by $A'$
 its derived set,
consisting of its points of accumulation,
$\{ x \in X: x \in \overline{ A \setminus \{ x \} } \}$.
Note that
(1)
$A' \subset \overline{A}$,
(2)
$A'$ is closed if $A$ is,
(3)
if
$X$ is a $\T_1$-space
then
$A' = \overline{A}\,'$,
in particular $A'$ is closed.
Note the further elementary properties
(assuming, for (5), that $X$ is $\T_1$):
for $A \subset B \subset X$, $C \subset X$, $U$ open in $X$ and $K \in K(X)$,
\[
(4)
\
(A \cap C )' \subset B' \cap C',
\
(5)
\
A' \cap U \neq \emptyset \implies \# ( A \cap U ) = \infty;
\
(6)
\
K' = \emptyset \iff \# K <  \infty.
\]
Thus,
for $X$ Hausdorff,
the prescription $Z \mapsto Z'$
defines a map $\Delta_X: K(X) \to K(X)$,
the \emph{Cantor--Bendixson derivative}
(whose study,
as an operator, was initiated by Kuratowski; see~\cite{Kur}).

We now turn to the connection with random closed sets.
Set $\closed := K(\mathbb{I})$,
$\closed_{00} := K_{00}(\mathbb{I})$
and
$\Delta := \Delta_{\mathbb{I}}$,
where
$\mathbb{I}$ denotes the unit interval $[0,1]$ with its standard topology.
Thus $\closed$ is compact
and metrised by the Hausdorff metric of the standard metric of $\mathbb{I}$,
in particular it is second countable,
with countable dense subset
$\closed_{00} \cap \mathcal{P}( \mathbb{I} \cap \Rat )$,
and
$\Delta^{-1}( \{ \emptyset \} ) = \closed_{00} \subsetneq \closed$.
By a \emph{random closed subset of} $\mathbb{I}$ is meant simply a
$\closed$-valued random variable,
in other words a
measurable map from $\Omega$ to $\closed$,
for a probability space $(\Omega, \mathfrak{F}, \Prob)$.

\begin{lemma}
\label{lemma: boundary}
Let $F, U \subset \mathbb{I}$, with $F$ closed, $U$ open and $F \supset U$.
Then,
the following hold.
\begin{alist}
\item
$
\Delta^{-1} ( M_F )
\subset
\big\{ Z \in \closed: \#( Z \cap F ) < \infty \big\}
\subset
\big\{ Z \in \closed: \#( Z \cap U ) < \infty \big\}
\subset
\Delta^{-1} ( M_U )
$.
\item
Let
$\partial F$ denote the topological boundary of $F$. Then
\begin{align*}
\Delta^{-1} ( M_F ) \cup H_{\partial F}
&
=
\big\{ Z \in \closed: \#( Z \cap F ) < \infty \big\} \cup H_{\partial F}
\\
&
=
\big\{ Z \in \closed: \#( Z \cap \Int F ) < \infty \big\} \cup H_{\partial F}
=
\Delta^{-1} ( M_{\Int F} ) \cup H_{\partial F}.
\end{align*}
\end{alist}
\end{lemma}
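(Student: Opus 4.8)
The plan is to deduce both parts purely from the elementary properties of the derived-set operation $Z \mapsto Z'$ recorded above (the items numbered $(1)$--$(6)$ in the text), using repeatedly that for $Z \in \closed$ and any closed $F \subset \mathbb{I}$ the intersection $Z \cap F$ is again a closed, hence compact, subset of $\mathbb{I}$, i.e. a member of $\closed$. Note also that $\partial F$ is closed and $\Int F$ is open, so $H_{\partial F}$ and $M_{\Int F}$ are legitimate sub-basic hit and miss sets.

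First I would establish (a). The middle inclusion is immediate: $U \subset F$ gives $Z \cap U \subset Z \cap F$, so finiteness of the latter forces finiteness of the former. For the first inclusion I would take $Z$ with $Z' \cap F = \emptyset$ and apply property $(4)$ with $A = B = Z$ and $C = F$, together with $F' \subset \overline{F} = F$ (here closedness of $F$ is used), to get $(Z \cap F)' \subset Z' \cap F' \subset Z' \cap F = \emptyset$; since $Z \cap F$ is compact, property $(6)$ then yields $\#(Z \cap F) < \infty$. For the last inclusion I would use the contrapositive of property $(5)$ (valid as $\mathbb{I}$ is $\T_1$): $\#(Z \cap U) < \infty$ implies $Z' \cap U = \emptyset$, that is $Z \in \Delta^{-1}(M_U)$.

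For (b) I would first apply part (a) with $U = \Int F$, which gives the chain
\[
\Delta^{-1}(M_F) \subset \{ Z \in \closed : \#(Z \cap F) < \infty \} \subset \{ Z \in \closed : \#(Z \cap \Int F) < \infty \} \subset \Delta^{-1}(M_{\Int F}),
\]
and unioning each term with $H_{\partial F}$ preserves these inclusions; so it remains only to close the loop by showing $\Delta^{-1}(M_{\Int F}) \cup H_{\partial F} \subset \Delta^{-1}(M_F) \cup H_{\partial F}$. For this I would take $Z \in \closed$ with $Z' \cap \Int F = \emptyset$ and $Z \cap \partial F = \emptyset$ (the latter saying precisely $Z \notin H_{\partial F}$), and use $F = \Int F \cup \partial F$ (valid since $F$ is closed) to write $Z' \cap F = (Z' \cap \Int F) \cup (Z' \cap \partial F) = Z' \cap \partial F$. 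Since $Z' \subset \overline{Z} = Z$ and $Z \cap \partial F = \emptyset$, this is empty, so $Z \in \Delta^{-1}(M_F)$, as required.

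I do not anticipate a real obstacle: the content is bookkeeping with the listed hyperspace and derived-set facts. The one place that rewards care is the first inclusion of (a), where it matters that $\mathbb{I}$ is compact metric, so that an infinite closed subset genuinely has non-empty derived set (property $(6)$), and that the containment $(Z \cap F)' \subset Z' \cap F'$ with $F' \subset F$ is applied to exactly the right sets.
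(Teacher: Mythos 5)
Your proof is correct and takes essentially the same route as the paper: part (a) is exactly the paper's appeal to properties (4), (6) and (5), and part (b) closes the chain via the same observation that for $Z$ missing $\partial F$ one has $Z' \subset Z$ and $F = \Int F \cup \partial F$, so $Z' \cap F = Z' \cap \Int F$. No gaps.
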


\begin{proof}
(a)
follows from (4), (6) and (5) above.

(b)
By part (a),
(b) holds with equality replaced by subset in all three places.
Let
$Z \in
\closed \setminus H_{\partial F}
=
M_{\partial F}$.
Then $Z \cap \partial F  = \emptyset$, so
$Z \cap F = Z \cap \Int F$
and so
$
Z' \cap F
=
Z' \cap \Int F
$.
Thus
$Z \in \Delta^{-1} ( M_F )$
if and only if
$Z \in \Delta^{-1} ( M_{\Int F} )$.
Therefore the outer sets coincide.
The result follows.
\end{proof}

The contents of the following proposition are known;
we include their short proofs since
they are instructive and do not seem to be readily available.

\begin{proposition}
\label{propn: X}
The following hold.
\begin{alist}
\item
$\Borel ( \closed ) =
\sigma \big\{ M_J: J \text{ is a closed subinterval of } \mathbb{I} \big\}$.
\item
$\Delta$ is Borel measurable.
\end{alist}
\end{proposition}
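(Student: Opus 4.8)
\emph{Part (a).} One inclusion is immediate: for a closed subinterval $J$ of $\mathbb{I}$ the set $M_J$ is a sub-basic open set of the Vietoris topology, hence Borel, so $\sigma\{M_J:J\text{ a closed subinterval of }\mathbb{I}\}\subseteq\Borel(\closed)$. For the reverse inclusion the plan is to show that every sub-basic open set of $\closed$ belongs to $\sigma\{M_J\}$; since $\closed$ is compact metrisable, hence second countable and Lindel\"of, every open subset of $\closed$ is then a countable union of finite intersections of sub-basic open sets, which gives $\Borel(\closed)\subseteq\sigma\{M_J\}$. Two families then remain. For $M_F$ with $F\subseteq\mathbb{I}$ closed: using compactness of $F$ one writes $F=\bigcap_n G_n$ with $G_n\downarrow F$ and each $G_n$ a finite union of closed subintervals; then $M_{G_n}$ is the finite intersection of the sets $M_J$ over the closed subintervals $J$ comprising $G_n$ (by~\eqref{eqn: 6.0b}), and $M_F=\bigcup_n M_{G_n}$ by~\eqref{eqn: FnF}, whence $M_F\in\sigma\{M_J\}$. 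For $H_U$ with $U\subseteq\mathbb{I}$ open: write $U=\bigcup_j I_j$ as a countable union of open subintervals; each $I_j$ is an increasing union of closed subintervals, so $M_{I_j}$ is a countable intersection of sets $M_J$ by~\eqref{eqn: 6.0b}, hence lies in $\sigma\{M_J\}$, and therefore so do $H_{I_j}=(M_{I_j})^\complement$ and $H_U=\bigcup_j H_{I_j}$ (using~\eqref{eqn: 6.0a}). This establishes (a), and records in passing that $M_V\in\Borel(\closed)$ for \emph{every} open $V\subseteq\mathbb{I}$.

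\emph{Part (b).} By part (a) it suffices to prove $\Delta^{-1}(M_J)\in\Borel(\closed)$ for each closed subinterval $J=[a,b]$ of $\mathbb{I}$. Write $\partial J$ for the boundary of $J$ in $\mathbb{I}$, a set of at most two points, hence closed. Since $Z'$ is closed and contained in $Z$ (as $\mathbb{I}$ is $\T_1$), splitting $J=\Int J\sqcup\partial J$ gives
\[
\Delta^{-1}(M_J)=\{Z\in\closed:Z'\cap J=\emptyset\}=\{Z:Z'\cap\Int J=\emptyset\}\cap\{Z:Z'\cap\partial J=\emptyset\},
\]
and I would show each factor is Borel. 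For the first factor, fix closed subintervals $K_m$ with $K_m\uparrow\Int J$ and $\Int K_m\uparrow\Int J$. Using that an infinite subset of a compact metric space has an accumulation point there, and that an accumulation point of $Z$ lying in an open set $W$ is an accumulation point of $Z\cap W$, one checks that $Z'\cap\Int J=\emptyset$ holds if and only if $\#(Z\cap K_m)<\infty$ for all $m$; so the first factor equals $\bigcap_m\{Z:\#(Z\cap K_m)<\infty\}$. It then remains to see that $\{Z:\#(Z\cap K)<\infty\}=\bigcup_n\{Z:\#(Z\cap K)\le n\}$ is Borel for each compact $K\subseteq\mathbb{I}$, which follows from the identity
\[
\{Z:\#(Z\cap K)\le n\}=\bigcap_{m\ge1}\ \bigcup\big\{\,M_{K\setminus(I_1\cup\cdots\cup I_n)}:\ I_1,\dots,I_n\text{ open subintervals of length}<1/m\text{ with rational endpoints}\,\big\},
\]
in which each $M_{K\setminus(I_1\cup\cdots\cup I_n)}$ is sub-basic open (because $K\setminus\bigcup_j I_j$ is closed) and the inner union is countable. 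For the second factor, since $\partial J$ is finite, $\{Z:Z'\cap\partial J=\emptyset\}=\bigcap_{p\in\partial J}\{Z:p\notin Z'\}$; and $p\notin Z'$ precisely when some punctured neighbourhood of $p$ misses $Z$, so $\{Z:p\notin Z'\}=\bigcup_m M_{V_{p,m}}$ where $V_{p,m}:=(p-1/m,p+1/m)\setminus\{p\}$ is open, each $M_{V_{p,m}}$ being Borel by part (a). Combining the two factors shows $\Delta^{-1}(M_J)\in\Borel(\closed)$, hence $\Delta$ is Borel measurable. The degenerate cases $a=0$, $b=1$, $a=b$, and the value $J=\mathbb{I}$ (where $M_{\mathbb{I}}=\{\emptyset\}$ and $\Delta^{-1}(\{\emptyset\})=\closed_{00}$) are handled the same way or more directly.

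\emph{Main obstacle.} The reductions to subintervals in (a) are routine. The substance is in (b): obtaining the Borel description of $\{Z:\#(Z\cap K)\le n\}$ via coverings by arbitrarily short intervals, and --- the point already distilled in Lemma~\ref{lemma: boundary} --- treating the endpoints of $J$ separately, since $Z$ may accumulate at an endpoint of $J$ only from outside $J$, so that $Z'\cap J$ can be strictly larger than $(Z\cap J)'$. One could alternatively derive the first factor's measurability from Lemma~\ref{lemma: boundary}(b) together with the Borel-ness of $\{Z:\#(Z\cap J)<\infty\}$, but the direct split above seems the cleanest route.
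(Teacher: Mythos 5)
Your proof is correct, and your part (a) is essentially the paper's argument (both directions: sub-basic hit and miss sets lie in $\sigma\{M_J\}$ via \eqref{eqn: 6.0} and \eqref{eqn: FnF}, then Lindel\"of/second countability). In part (b), however, you take a genuinely different route. The paper approximates $J=[a,b]$ from the \emph{outside}: with $U_n := \, ]a-\tfrac1n, b+\tfrac1n[ \, \cap \, \mathbb{I}$ one has $M_J=\bigcup_n M_{\overline{U_n}}=\bigcup_n M_{U_n}$ by \eqref{eqn: FnF}, and the sandwich of part (a) of Lemma~\ref{lemma: boundary}, applied with $F=\overline{U_n}$ and $U=U_n$, yields exactly $\Delta^{-1}(M_J)=\bigcup_n\{Z\in\closed:\#(Z\cap U_n)<\infty\}$; the only remaining ingredient is that $\{Z:\#(Z\cap U)\ges p\}$ is \emph{open} for $U$ open, being the union of the sets $H_{V_1}\cap\cdots\cap H_{V_p}$ over disjoint open $V_1,\dots,V_p\subset U$, so that $\Delta^{-1}(M_J)$ is exhibited as a countable union of closed sets. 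You instead exhaust $\Int J$ from the \emph{inside} by compact subintervals and handle the at most two boundary points separately through punctured neighbourhoods, which obliges you to prove Borel-ness of $\{Z:\#(Z\cap K)\les n\}$ for an arbitrary compact $K$ via the covering identity with short rational intervals (that identity, and your interior/boundary split, are both sound; your split mirrors part (b) of Lemma~\ref{lemma: boundary}, whereas the paper only ever invokes part (a)). The outer approximation is precisely what makes the endpoint phenomenon you flag --- accumulation at $\partial J$ only from outside $J$ --- disappear, since the open neighbourhoods $U_n$ register such accumulation automatically; this buys the paper a shorter proof and an explicit $F_\sigma$ description of $\Delta^{-1}(M_J)$. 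What your route buys in exchange is the slightly more general fact, not needed by the paper, that $\{Z\in\closed:\#(Z\cap K)<\infty\}$ is Borel for \emph{every} compact $K\subset\mathbb{I}$, not merely for sets sandwiched between an open set and its closure.
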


\begin{proof}
(a)
Denote
the RHS $\sigma$-algebra by $\Sigma$.
Let
$U$ be open in $\mathbb{I}$ and let
$F$ be closed in $\mathbb{I}$.
Then
$U = \bigcup J_n$ for
a sequence $(J_n)$ of closed subintervals of $\mathbb{I}$,
and $F = \bigcap F_n$ for
a sequence $(F_n)$ of closed sets of $\mathbb{I}$ such that
$F_n \downarrow F$ and, for each $n \in \Nat$, $F_n = \bigcup_{i=1}^{k(n)} J_n^i$
for some closed subintervals $J_n^1, \cdots , J_n^{k(n)}$ of $\mathbb{I}$.
Therefore,
using~\eqref{eqn: 6.0} and~\eqref{eqn: FnF},
\[
H_U =
\bigcup_{n=1}^\infty H_{J_n} =
\bigcup_{n=1}^\infty \big( M_{J_n} \big)^\complement \in
\Sigma
\ \text{ and } \
M_F =
\bigcup_{n=1}^\infty M_{F_n} =
\bigcup_{n=1}^\infty \bigcap_{i=1}^{k(n)} M_{J^i_n}  \in
\Sigma.
\]
Since $\closed$ is second countable
it follows from Lindel\"{o}f's Theorem that
every open set of $\closed$
is a countable union of basic open sets,
so
$\Sigma \supset \Borel( \closed )$.
The reverse inclusion is clear.

(b)
Let $J$ be a closed subinterval of $\mathbb{I}$, say $[a,b]$.
For $U$ open in $\mathbb{I}$ and $p \in \Nat$,
since $\mathbb{I}$ is Hausdorff,
the set $\{ Z \in \closed: \#(Z \cap U) \ges p \}$
equals
the open set
\[
\bigcup \big\{ H_{V_1} \cap \cdots \cap H_{V_p} :
\, V_1, \cdots , V_p \text{ disjoint open subsets of } U
\big\}.
\]
It follows from~\eqref{eqn: FnF} that
$
M_J
=
\bigcup_{n=1}^\infty M_{U_n}
=
\bigcup_{n=1}^\infty M_{\overline{U_n}}
$,
where
$U_n := ] a - \frac{1}{n}, b + \tfrac{1}{n} [ \, \cap \, \mathbb{I}$
($n \in \Nat$).
Now,
 by part (a) of Lemma~\ref{lemma: boundary},
\[
\Delta^{-1} \big( M_{ \overline{ U_n } } \big)
\subset
\big\{
 Z \in \closed: \#(Z \cap \overline{ U_n }) < \infty
 \big\}
\subset
\big\{
 Z \in \closed: \#(Z \cap U_n) < \infty
 \big\}
 \subset
 \Delta^{-1} \big( M_{ U_n }  \big)
 \]
for each $n \in \Nat$.
 It follows that
\[
 \Delta^{-1} ( M_J ) =
 \bigcup_{n \in \Nat}
 \big\{
 Z \in \closed: \#(Z \cap U_n) < \infty
 \big\}
 =
 \bigcup_{n,p \in \Nat}
 \big\{
 Z \in \closed: \#(Z \cap U_n) \ges p
 \big\}^\complement
 \in
 \Borel ( \closed ).
 \]
The Borel measurability of $\Delta$ therefore follows from part (a).
\end{proof}

\begin{remark}
$\Delta$ is not continuous,
since
$\{ \emptyset \}$ is closed in $\closed$
but
$\Delta^{-1}( \{ \emptyset \} )$
is not closed because it equals
$\closed_{00}$
which is a
dense proper subset of $\closed$.
\end{remark}

For the convenience of the reader
we quote
the key propositions upon which our next result depends.
Recall that in Liebscher's approach
the parameter set of an Arveson system $\E$ is extended to $\Rplus$,
with $\E_0 := \Comp$.

\begin{theorem}
[\cite{Lie-random}, Theorem 3.16, Proposition 3.18, Corollary 3.21]
\label{thm: Lie random}
Let $\E$ be an Arveson system,
let $P = ( P_{r,t} )_{0 \les r < t \les 1}$
be a family of nonzero orthogonal projections in the
von Neumann algebra $B( \E_1 )$
satisfying
the evolution and bi-adaptedness conditions
\begin{equation}
\label{eqn: ad-bi}
\Prs \Pst = \Prt
\ \text{ and } \
\Prt \in \IEr \ot B(\Etminusr) \ot \IEoneminust
\qquad
(0 \les r < s < t \les 1),
\end{equation}
and let $\omega$ and $\varphi$ be faithful normal states on $B( \E_1 )$.
Then the following hold\tu{:}
\begin{alist}
\item
The map
$(r,t) \mapsto \Prt$
is strongly continuous,
with $\Prt \to \IEone$
as $(r,t) \to (s,s)$
for $0<s<1$.

\item
There is a unique Borel probability measure
$\ProbPomega$ on $\closed$ satisfying
\[
\ProbPomega \Big( \bigcap\nolimits_{i=1}^N \Msiti \Big)
=
\omega \Big( \prod\nolimits_{i=1}^N \Psiti \Big)
\]
\tu{(}$N \in \Nat, 0 \les s_i < t_i \les 1 \text{ for } i=1, \cdots , N$\tu{)}.

\item
$\ProbPomega ( H_{\{ a \}} ) = 0$
\tu{(}$a \in \mathbb{I}$\tu{)}.

\item
The correspondence
$1_{\Mst} \mapsto \Pst$
$(0 \les s<t \les 1)$,
extends to an
injective normal unital representation
$
\piP: L^\infty( \ProbPomega ) \to B(\E_1)$.
Moreover,
$$
\Ran \piP = \{ \Pst: 0 \les s<t \les 1 \}''.
$$

\item
$\ProbPphi \sim \ProbPomega$.
\end{alist}
\end{theorem}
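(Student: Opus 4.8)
This is Liebscher's theorem and the plan is to follow~\cite{Lie-random}; the organising object is the commutative von Neumann algebra generated by the projections $\Prt$. First I would verify that the family $(\Prt)_{0\les r<t\les 1}$ is commuting: for two intervals that are disjoint this is immediate from the bi-adaptedness relation, the two projections being supported on different tensor factors of $\E_1$, and the nested and overlapping cases reduce to this one by using the evolution relation to write each of the two projections as a product of projections over mutually adjacent or disjoint subintervals. Thus $\mathcal{A}:=\{\Prt:0\les r<t\les 1\}''$ is a commutative von Neumann algebra on the separable Hilbert space $\E_1$, and $\omega|_{\mathcal{A}}$ and $\varphi|_{\mathcal{A}}$ remain faithful normal states. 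Part~(a) is the analytic heart, and the one place where the Arveson rather than merely algebraic structure of $\E$ is essential: I would obtain it from the strong continuity of the unitary flip group $\UET$ on $\E_1$ of Definition~\ref{defn: 1.2}, supplied by Theorem~\ref{thm: Lieb 7.7}. Conjugation by $\UETt$ carries a member of the projection family onto another member supported over a translated interval, so this group, being strongly continuous with $\UETt\to\IEone$ as $t\to0$, yields both the joint strong continuity of $(r,t)\mapsto\Prt$ and the convergence $\Prt\to\IEone$ as $(r,t)\to(s,s)$, following~\cite{Lie-random}. In particular $\mathcal{A}$ is generated by the countable subfamily with rational indices.

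For part~(b) I would extend the notation by putting $P_J:=P_{a_1,b_1}\cdots P_{a_k,b_k}$ for $J=[a_1,b_1]\cup\cdots\cup[a_k,b_k]$ a finite union of closed intervals; by commutativity and evolution this is independent of the presentation and monotone, $J\subseteq J'\implies P_{J'}\les P_J$. Then $M_J\mapsto\omega(P_J)$ is a finitely additive set function on the Boolean algebra of subsets of $\closed$ generated by the sets $M_J$, since $M_J\cap M_{J'}=M_{J\cup J'}$. The main obstacle is to promote this to a premeasure, that is, to prove countable additivity on that Boolean algebra; here I would combine the compactness of $\closed$ with the shrinking estimate of part~(a), which controls $\omega(P_{J_n})$ when $J_n\downarrow\emptyset$, along the lines of~\cite{Lie-random}. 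Granting this, Carath\'eodory extension yields a Borel probability measure $\ProbPomega$ on $\closed$, since $\Borel(\closed)=\sigma\{M_J\}$ by Proposition~\ref{propn: X}(a); uniqueness holds because the sets $\bigcap_{i=1}^N\Msiti$ form a $\pi$-system that generates $\Borel(\closed)$ and on which $\ProbPomega$ is prescribed.

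The remaining parts are then soft. For~(c), $H_{\{a\}}=\bigcap_n H_{[a-1/n,\,a+1/n]\cap\mathbb{I}}$ as a decreasing intersection, so $\ProbPomega(H_{\{a\}})=\lim_n\big(1-\omega(P_{[a-1/n,\,a+1/n]\cap\mathbb{I}})\big)=0$ by part~(a). For~(d), the assignment $1_{\Mst}\mapsto\Pst$ extends, using commutativity, to a contractive unital $*$-homomorphism from the $\sigma$-weakly dense $*$-subalgebra of $L^\infty(\ProbPomega)$ generated by the $1_{\Mst}$ into $\mathcal{A}$, and part~(b) gives $\omega\circ\piP=\int(\cdot)\,d\ProbPomega$ on it; using that $\omega$ is faithful normal one extends this to a normal unital $*$-homomorphism $\piP\colon L^\infty(\ProbPomega)\to B(\E_1)$ with $\omega\circ\piP=\int(\cdot)\,d\ProbPomega$ throughout. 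Then $\piP$ is injective, since $\piP(1_N)=0$ forces $\ProbPomega(N)=\omega(\piP(1_N))=0$, and its range, being a von Neumann subalgebra of $B(\E_1)$ that contains every $\Pst$ and, by normality together with part~(a), is contained in their bicommutant, equals $\{\Pst:0\les s<t\les 1\}''$.

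Finally, for~(e), transporting $\varphi|_{\mathcal{A}}$ through $\piP$ produces a faithful normal state on $L^\infty(\ProbPomega)$, hence of the form $f\mapsto\int f\rho\,d\ProbPomega$ for some $\rho\in L^1(\ProbPomega)$ with $\rho>0$ $\ProbPomega$-almost everywhere; the probability measure $\rho\cdot\ProbPomega$ agrees with $\ProbPphi$ on the $\pi$-system $\{\bigcap_i\Msiti\}$, so $\ProbPphi=\rho\cdot\ProbPomega$ by the uniqueness in part~(b), and $\rho>0$ a.e.\ gives $\ProbPphi\sim\ProbPomega$. The only hard step in all of this is the countable additivity of the premeasure in part~(b); the strong continuity in~(a) is the analytic ingredient that feeds it, and everything else is routine once the commutativity of $\mathcal{A}$ is established.
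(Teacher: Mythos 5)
The paper offers no proof of this statement: it is imported verbatim from Liebscher's memoir (Theorem 3.16, Proposition 3.18 and Corollary 3.21 of \cite{Lie-random}), so there is no internal argument to compare yours against, and your plan of reconstructing Liebscher's route is the right frame. Within that frame, your treatment of (c), (d) and (e) is essentially correct and complete as a reduction to (a) and (b): the monotone limit for (c), the identification of the commutative von Neumann algebra $\{\Pst\}''$ with $L^\infty(\ProbPomega)$ for (d), and the density/uniqueness argument for (e) are all sound.

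The genuine gap is in part (a), which you rightly call the analytic heart but then dispatch by an argument that does not apply. Conjugation by the flip group $\UETt$ carries the bi-adapted algebra $\IEr \ot B(\Etminusr) \ot \IEoneminust$ onto the corresponding algebra over a translated interval, but it does \emph{not} carry $\Prt$ to another member of the family: the hypotheses \eqref{eqn: ad-bi} contain no translation covariance, so the family is not permuted by the flip group (covariance does hold for the special families $P^{\F}_{r,t}$ coming from subsystems, which may be what you had in mind, but the theorem is stated for arbitrary evolution families). A symptom of the gap is that you never use the hypothesis $\Prt \neq 0$, which is indispensable for $\Prt \to \IEone$. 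The correct shape of the argument is: the evolution relation plus commutativity give monotonicity ($\Prt \les \Pst$ for $r \les s$), so the net over shrinking intervals around $s$ increases to a projection $Q_s$; $Q_s$ lies in $\bigcap_{r<s<t} \IEr \ot B(\Etminusr) \ot \IEoneminust$, and the nontrivial input from the Arveson (measurable/continuous) structure is that this intersection is $\Comp\,\IEone$ — this is where the flip-group continuity genuinely enters — whence $Q_s$ is a scalar projection dominating the nonzero $\Prt$, hence $\IEone$; joint strong continuity then needs a further (measurability-implies-continuity) argument, which in \cite{Lie-random} is a substantial piece of work, not a corollary of strong continuity of $\UET$ alone. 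Relatedly, in (b) your appeal to ``compactness of $\closed$ plus the shrinking estimate'' for countable additivity is only a pointer: the sets $M_J$ are open but not clopen in $\closed$, so the finite-intersection-property trick does not apply directly and one needs the regularity/approximation argument (approximating $\Mst$ by $M_{[s-\ve,t+\ve]}$ and invoking (a)) that constitutes Liebscher's Proposition 3.18. Since the paper itself merely cites these results, deferring them is legitimate in context, but as a standalone proof your sketch does not yet establish (a) or the premeasure step in (b).
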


\begin{remarks}
(i)
For a product subsystem $\F$ of $\E$,
the family
$\PF = ( \PFrt )_{0 \les r<t \les 1}$,
as defined in~\eqref{eqn: PF},
satisfies~\eqref{eqn: ad-bi}.

(ii)
By (e),
the space
$L^\infty( \ProbPomega )$,
and therefore also the representation
$\piP$,
 is independent of the choice of faithful normal state $\omega$ on $B(\E_1)$.

(iii)
For a faithful normal state $\omega$ on $B( \E_1 )$,
we write $\ProbFomega$ and $\piF$ respectively
for the Borel probability measure $\ProbPomega$
and
representation $\piP$,
when $P = \PF$.
By (e),
the probability measure equivalence class of $\ProbFomega$ is independent of
the choice of faithful normal state $\omega$ on $B(\F_1)$;
let us denote it $\MF$.
\end{remarks}

We need the following extension of~\cite{Lie-random}, Corollary 6.2.

\begin{theorem}
\label{thm: MF}
Let $\F$ be a product subsystem of
an Arveson system $\E$.
Then
\[
\MF
 =
\big\{
\ProbFomega:
\omega \text{ is a faithful normal state on } B(\E_1)
\big\}.
\]
\end{theorem}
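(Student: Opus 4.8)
The plan is to prove the two inclusions $\MF \supseteq \{ \ProbFomega : \omega \text{ a faithful normal state on } B( \E_1 ) \}$ and $\MF \subseteq \{ \ProbFomega : \omega \text{ a faithful normal state on } B( \E_1 ) \}$ separately; the first is essentially immediate and the second carries all the content. By the first of the remarks following Theorem~\ref{thm: Lie random}, the family $\PF = ( \PFrt )_{0 \les r < t \les 1}$ of~\eqref{eqn: PF} consists of nonzero orthogonal projections satisfying the evolution and bi-adaptedness conditions~\eqref{eqn: ad-bi}, so Theorem~\ref{thm: Lie random} applies with $P = \PF$. Part~(e) of that theorem gives $\ProbFomega \sim \mathbb{P}^\Fock_{\omega'}$ for any two faithful normal states $\omega, \omega'$ on $B( \E_1 )$, so all such measures lie in one equivalence class, which is precisely $\MF$; this yields $\supseteq$ and the well-definedness of $\MF$. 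So fix a faithful normal state $\omega$ on $B( \E_1 )$ and let $\nu$ be an arbitrary Borel probability measure on $\closed$ with $\nu \sim \ProbFomega$; the task is to produce a faithful normal state $\varphi$ on $B( \E_1 )$ with $\mathbb{P}^\Fock_\varphi = \nu$.

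Set $\mathcal{A} := \{ \PFst : 0 \les s < t \les 1 \}''$. By Theorem~\ref{thm: Lie random}(d) this is an \emph{abelian} von Neumann subalgebra of $B( \E_1 )$, being the range of the normal $*$-isomorphism $\piF : L^\infty( \ProbFomega ) \to \mathcal{A}$. Put $h := d\nu / d\ProbFomega \in L^1( \ProbFomega )$; mutual absolute continuity gives $h > 0$ $\ProbFomega$-almost everywhere and $\int h \, d\ProbFomega = 1$, so $f \mapsto \int f h \, d\ProbFomega$ is a faithful normal state on $L^\infty( \ProbFomega )$, which transports along $\piF^{-1}$ to a faithful normal state $\rho$ on $\mathcal{A}$.

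The crux --- and, I expect, the only genuine obstacle --- is the claim that \emph{$\rho$ extends to a faithful normal state $\varphi$ on $B( \E_1 )$}. Extending $\rho$ to a \emph{normal} state on $B( \E_1 )$ is easy (a vector state does it, $\mathcal{A}$ being abelian with a separating vector in the separable space $\E_1$), but no vector state on $B( \E_1 )$ is faithful unless $\dim \E_1 = 1$, so $\rho$ must genuinely be spread over a density operator of full support. For this I would invoke the spectral multiplicity theorem for the abelian algebra $\mathcal{A}$: identify $\E_1$ with a direct integral $\int^{\oplus}_X \mathfrak{h}_x \, d\mu(x)$ over a measure space $(X,\mu)$ on which $\mathcal{A}$ acts as $L^\infty(X,\mu)$ by diagonal multiplication, with $\dim \mathfrak{h}_x \ges 1$ for $\mu$-a.e.\ $x$ (faithfulness of the representation); express $\rho$ in this picture as $f \mapsto \int_X f g \, d\mu$ with $g \in L^1(X,\mu)$, noting that faithfulness of $\rho$ forces $g > 0$ a.e.\ and $\int_X g \, d\mu = 1$; choose a measurable field $x \mapsto \sigma_x$ of faithful unit-trace density operators on the fibres $\mathfrak{h}_x$; and put $D := \int^{\oplus}_X g(x) \sigma_x \, d\mu(x)$. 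Then $D$ is a positive \emph{injective} trace-class operator on $\E_1$ with $\tr D = 1$, so $\varphi := \tr( D \,\cdot\, )$ is a faithful normal state on $B( \E_1 )$, and $\tr( D M_f ) = \int_X f g \, d\mu = \rho( M_f )$ shows $\varphi|_{\mathcal{A}} = \rho$. (Alternatively one could simply cite the standard fact that a faithful normal state on a von Neumann subalgebra of $B( \Hil )$, with $\Hil$ separable, extends to a faithful normal state on $B( \Hil )$.)

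It then remains to verify $\mathbb{P}^\Fock_\varphi = \nu$. Since the $\PFsiti$ commute, $\prod_{i=1}^N \PFsiti \in \mathcal{A}$ and equals $\piF\big( \prod_{i=1}^N 1_{\Msiti} \big)$, so for $N \in \Nat$ and $0 \les s_i < t_i \les 1$,
\begin{align*}
\mathbb{P}^\Fock_\varphi \Big( \bigcap\nolimits_{i=1}^N \Msiti \Big)
&=
\varphi \Big( \prod\nolimits_{i=1}^N \PFsiti \Big)
=
\rho \Big( \prod\nolimits_{i=1}^N \PFsiti \Big)
\\
&=
\int_\closed \Big( \prod\nolimits_{i=1}^N 1_{\Msiti} \Big) h \, d\ProbFomega
=
\nu \Big( \bigcap\nolimits_{i=1}^N \Msiti \Big),
\end{align*}
using $\varphi|_{\mathcal{A}} = \rho$ in the second equality, the definition of $\rho$ in the third, and $h = d\nu / d\ProbFomega$ in the fourth. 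By the uniqueness assertion of Theorem~\ref{thm: Lie random}(b), a Borel probability measure on $\closed$ is determined by its values on the sets $\bigcap_{i=1}^N \Msiti$, whence $\mathbb{P}^\Fock_\varphi = \nu$. As $\varphi$ is a faithful normal state on $B( \E_1 )$, this exhibits $\nu$ in the right-hand set, completing the inclusion $\subseteq$ and the proof.
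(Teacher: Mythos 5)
Your argument is correct, but it is not the route the paper takes: the paper's entire proof is a one-line deferral, observing that Liebscher's proof (of~\cite{Lie-random}, Corollary 6.2, the case where $\F$ is generated by a unit) goes through verbatim for an arbitrary product subsystem, whereas you reconstruct a complete, self-contained argument. The easy inclusion is handled the same way in both (part (e) of Theorem~\ref{thm: Lie random} applied to $P = \PF$, which is legitimate by the first remark after that theorem). For the hard inclusion your chain --- take $h = d\nu/d\ProbFomega > 0$, transport the state $f \mapsto \int f h \, d\ProbFomega$ through the injective normal unital representation $\piF$ of part (d) to a faithful normal state $\rho$ on the abelian algebra $\{ \PFst : 0 \les s < t \les 1 \}''$, extend $\rho$ to a faithful normal state $\varphi$ on $B(\E_1)$, and conclude via the defining identities and the uniqueness clause of part (b) (the sets $\bigcap_{i} \Msiti$ form a generating $\pi$-system, by Proposition~\ref{propn: X}(a)) --- is sound, and you correctly identify the only genuine issue, namely preserving faithfulness in the extension step; your direct-integral construction of an injective unit-trace density operator settles this, using exactly the abelianness supplied by part (d) (the parenthetical ``standard fact'' is also true, but the explicit construction is safer). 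What your version buys is self-containedness and transparency about where parts (b), (d) and (e) of Theorem~\ref{thm: Lie random} enter; what the paper's version buys is brevity, at the cost of asking the reader to check that Liebscher's argument is insensitive to replacing the unit-generated subsystem by a general one.
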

\begin{proof}
The proof in~\cite{Lie-random},
for the case where $\F$ is generated by a unit of $\E$,
works equally well for an arbitrary product subsystem.
\end{proof}

We are now ready to give our generalisation of
Proposition 3.33 of~\cite{Lie-random}.

\begin{theorem}
\label{thm: 5.2}
Let $\F$ be a product subsystem of
an Arveson system $\E$.
Then
the following hold.
\begin{alist}

\item
$
\piF \big( 1_{\Delta^{-1}(\Mst)} \big)
=
\PFcheckst
$
$(0 \les s<t \les 1)$.

\item
$
\ProbFomega \circ \Delta^{-1} = \ProbFcheckomega
$,
for any faithful normal state $\omega$ on $B( \E_1 )$.
\item
$\MF \circ \Delta^{-1} = \MFcheck$.
\end{alist}
\end{theorem}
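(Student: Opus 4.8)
The plan is to reduce parts (b) and (c) to part (a), which carries all of the geometric content.

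Part (c) will follow from (b): applying Theorem~\ref{thm: MF} to $\F$ and to its cluster $\F\,\check{}$ --- a product subsystem of $\E$ by Definition~\ref{defn: 6.2} and the construction of Section~\ref{section: inclusion systems} --- gives $\MF=\{\ProbFomega\}$ and $\MFcheck=\{\ProbFcheckomega\}$, with $\omega$ ranging over the faithful normal states of $B(\E_1)$; since equivalence of probability measures is equality of null sets, and a set is $\ProbFomega$-null iff its $\Delta$-preimage is, mutually equivalent measures have mutually equivalent $\Delta$-pushforwards, so $\MF\circ\Delta^{-1}$ is the common class of $\{\ProbFomega\circ\Delta^{-1}\}$, which by (b) is $\MFcheck$. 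Part (b) will follow from (a) once one observes that $\ProbFomega(C)=\omega(\piF(1_C))$ for every Borel $C\subseteq\closed$: both sides are Borel probability measures which, by the defining relation of Theorem~\ref{thm: Lie random}(b) and multiplicativity of the unital representation $\piF$, agree on the $\pi$-system $\{\bigcap_{i=1}^{N}\Msiti\}$, and this $\pi$-system generates $\Borel(\closed)$ by Proposition~\ref{propn: X}(a). Applying this to $\Delta^{-1}(C)$, taking $C=\bigcap_i\Msiti$, and using $1_C\circ\Delta=\prod_i 1_{\Delta^{-1}(\Msiti)}$ together with (a) and multiplicativity of $\piF$ yields $\ProbFomega\circ\Delta^{-1}(C)=\omega(\prod_i\PFchecksiti)=\ProbFcheckomega(C)$, whence (b).

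For part (a), fix $0\le s<t\le 1$; by Proposition~\ref{propn: X}(b) the set $\Delta^{-1}(\Mst)$ is Borel, so $\piF(1_{\Delta^{-1}(\Mst)})$ is an orthogonal projection in $\Ran\piF=\{\PFrt\}''$. First I would invoke Lemma~\ref{lemma: boundary} (with $F=[s,t]$) and $\ProbFomega(H_{\{a\}})=0$ from Theorem~\ref{thm: Lie random}(c), applied at the relative boundary of $[s,t]$ in $\mathbb{I}$, to replace $\Delta^{-1}(\Mst)$, modulo $\ProbFomega$-null sets, by $\{Z\in\closed:\#(Z\cap\,]s,t[\,)<\infty\}$ (the case $[s,t]=\mathbb{I}$ needs no correction: there $\Delta^{-1}(M_{[0,1]})=\closed_{00}=\{Z:\#(Z\cap\,]0,1[\,)<\infty\}$, and $s=0$ or $t=1$ are similar). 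Then, arguing exactly as in the proof of Proposition~\ref{propn: X}(b) --- writing the event as $\bigcup_p\{\#(Z\cap\,]s,t[\,)\le p\}$, complementing the open sets $\{\#(Z\cap\,]s,t[\,)\ge p+1\}$, and using second countability of $\closed$ together with $H_V=\bigcup_k H_{J_k}$, $H_{J_k}=M_{J_k}^{\complement}$ for closed intervals $J_k\subseteq V$ --- this event is built from $\{M_{[a,b]}:s<a<b<t\}$ by a countable sequence of lattice operations. Since $\piF$ is an injective normal unital representation with $\piF(1_{M_{[a,b]}})=\PFab$, the operator $\piF(1_{\Delta^{-1}(\Mst)})$ is obtained by the \emph{same} sequence of lattice operations from $\{\PFab:s<a<b<t\}$. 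On the operator side I would expand $\PFcheckst$ via~\eqref{eqn: PF} --- for $0<s<t<1$, $\PFcheckst=\IEs\ot P_{(\F\,\check{})_{t-s}}\ot\IEoneminust$, with $s=0$ or $t=1$ treated likewise --- using the inductive-limit construction of Section~\ref{section: inclusion systems} for the inclusion subsystem $\FFminusperp$ that generates $\F\,\check{}$: $(\F\,\check{})_{t-s}=\bigvee_{{\bf r}\in J_{t-s}}(\FFminusperp)_{r_1}\bfcdot\cdots\bfcdot(\FFminusperp)_{r_n}$, with $(\FFminusperp)_r=\E_r\ominus\bigvee_{0<q<r}(\E_q\ominus\F_q)\ot(\E_{r-q}\ominus\F_{r-q})$. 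Restricting the directed set $J_{t-s}$ to a cofinal sequence of refining partitions (subnet property of the inductive limit) and embedding into $B(\E_1)$, this should present $\PFcheckst$ as a countable combination --- built from tensor products, complements and monotone suprema --- of the projections $\PFab$.

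The crux, and the main obstacle, will be to verify that these two constructions produce the \emph{same} operator: that the image under $\piF$ of the event ``$Z$ has no accumulation point in $]s,t[$'' is exactly $\IEs\ot(\F\,\check{})_{t-s}\ot\IEoneminust$; equivalently, that the Hilbert-space cluster construction of Definition~\ref{defn: 6.2} computes precisely the Cantor--Bendixson derivative on the random-set side. Both are ``pass to the limit over ever-finer partitions, keeping the part that survives'', but aligning the Hilbert-space operation $\E_r\ominus(\E_q\ominus\F_q)\ot(\E_{r-q}\ominus\F_{r-q})$ with the set-theoretic statement ``$Z$ has at most one point in $\,]0,r[\,$'', and then the refinement process with ``at most one point per cell in the limit'', is delicate: the approximating projections $\PFab$ do not pairwise commute (only when they share the dividing point), the approximation is not monotone in any single direction, and the endpoint terms $\PF_{s,\cdot}$, $\PF_{\cdot,t}$ must be shown to drop out in the limit. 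For this I would rely on Lemma~\ref{lemma: 5.3} and Corollary~\ref{cor: 5.3plus} (behaviour of the cluster fibres under $\bfcdot$), Proposition~\ref{propn: 7.1} ($\FFminusperp$ contains $\F$), and --- crucially --- the strong continuity of $(r,t)\mapsto\PFrt$ and the relation $\PFab\to\IEone$ as $b-a\to 0$ from Theorem~\ref{thm: Lie random}(a), which both control joins over dense sets of partition points and kill the endpoint contributions. It may help to reduce first, via Theorem~\ref{thm: 3.7}, to the type~I case $\F=\Comp u$, where Lemmas~\ref{lemma: XEu} and~\ref{lemma: cluster inclusion} describe the cluster fibres explicitly and the comparison with the Fock picture is transparent. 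The reductions in the first paragraph are then routine measure theory and functional calculus.
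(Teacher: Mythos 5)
Your reductions of (c) to (b) and of (b) to (a) are sound and essentially coincide with the paper's own argument (the identity $\ProbFomega(C)=\omega(\piF(1_C))$ for Borel $C$, obtained from agreement on the $\pi$-system of finite intersections of miss sets, is exactly what is needed). The problem is part (a): your proposal stops precisely where the mathematical content begins. You yourself label the identification of $\piF\big(1_{\Delta^{-1}(\Mst)}\big)$ with $\PFcheckst$ as ``the crux, and the main obstacle'' and offer only a list of tools you ``would rely on'' --- and several of those tools are inapposite. Theorem~\ref{thm: 3.7} cannot reduce the statement to the case $\F=\Comp u$: the theorem at hand concerns an \emph{arbitrary} product subsystem $\F$, and no such reduction is available. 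Likewise Lemma~\ref{lemma: 5.3}, Corollary~\ref{cor: 5.3plus} and the strong continuity of $(r,t)\mapsto\PFrt$ play no role in the required identification, and the difficulties you anticipate (non-commuting approximants, endpoint terms that must ``drop out in the limit'') do not in fact arise. As written, (a), and hence the theorem, is not proved.

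The gap is closed by a short, direct computation that matches the \emph{defining} join of the cluster with the hit-set description of ``two or more points'', not by a delicate limiting alignment. First, $\{Z:\#(Z\cap[s,t])\ges 2\}=\bigcup_{s<u<t}H_{[s,u[}\cap H_{[u,t]}$; since, modulo $\ProbFomega$-null sets, $1_{H_{[a,b[}}=1_{H_{[a,b]}}$ and $\piF(1_{H_{[a,b]}})=\IEone-\PFab$ has middle tensor factor $P_{\F_{b-a}^{\perp}}$, normality of $\piF$ sends this union to $\IEs\ot P_V\ot\IEoneminust$ with $V=\vee_{s<u<t}\,\F_{u-s}^{\perp}\ot\F_{t-u}^{\perp}=\FFminustminuss$ --- which is literally the definition of $\FFminustminuss$. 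Hence $\piF\big(1_{\{Z:\,\#(Z\cap[s,t])\les 1\}}\big)=\IEs\ot P_{\FFminusperptminuss}\ot\IEoneminust=P^{\FFminusperp}_{s,t}$. Second, $\{Z:\#(Z\cap[s,t])<\infty\}$ is the increasing, directed union over partitions $\mathcal{P}=\{s=s_0<\cdots<s_N=t\}$ of the events ``at most one point in each $[s_{i-1},s_i]$''; the corresponding operators $\prod_{i=1}^{N}P^{\FFminusperp}_{s_{i-1},s_i}$ are products of \emph{commuting} projections (they are bi-adapted to intervals overlapping only at the partition points), and by normality their supremum is $\IEs\ot P_{(\F\,\check{}\,)_{t-s}}\ot\IEoneminust=\PFcheckst$, because the inclusion system $\FFminusperp$ generates $\F\,\check{}$, so that $(\F\,\check{}\,)_{t-s}$ is the join over partitions of $\FFminusperp_{s_1-s_0}\ot\cdots\ot\FFminusperp_{s_N-s_{N-1}}$. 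Finally, Lemma~\ref{lemma: boundary}(b) together with $\ProbFomega(H_{\{a\}})=0$ identifies $1_{\Delta^{-1}(\Mst)}$ with $1_{\{Z:\,\#(Z\cap[s,t])<\infty\}}$ in $L^{\infty}(\ProbFomega)$, as you proposed, and (a) follows --- with no type~I reduction and no continuity argument. Supplying this two-step computation is what your proposal is missing.
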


\begin{proof}
Let $0 \les s<t \les 1$.
First note that,
by part (b) of Lemma~\ref{lemma: boundary}
and
part (c) of Theorem~\ref{thm: Lie random},
\begin{equation}
\label{eqn: fin}
\piF \big( 1_{\Delta^{-1}(\Mst)} \big)
=
\piF
\big(
1_{ \{ Z \in \closed: \# ( Z \cap [s,t] ) < \infty \} }
\big).
\end{equation}
For $Z \in \closed$,
\[
\# \big( Z \cap [s,t] \big) \ges 2
\iff
\exists_{u \in ]s,t[}:
Z \in H_{[s,u[} \cap H_{[u,t]}.
\]
and for
$0 \les a < b \les 1$,
$
\piF \big( 1_{ H_{[a,b[} } \big)
=
\piF \big( 1_{ H_{[a,b]} } \big)
$,
and
\begin{align*}
&
\piF \big( 1_{ H_{[a,b]} } \big)
=
\IEone - \PFab
=
\IEa \ot P_{ \Fock_{b-a}^\perp } \ot \IEoneminusa,
\end{align*}
so
\[
\piF \big( 1_{ H_{[s,u[} \cap H_{[u,t]} } \big)
=
\IEs \ot P_{ \Fock_{u-s}^\perp } \ot P_{ \Fock_{t-u}^\perp } \ot \IEoneminust,
\qquad
( s < u < t ).
\]
By the normality of $\piF$,
it follows that
\[
\piF
\big(
1_{ \{ Z \in \closed: \# ( Z \cap [s,t] ) \ges 2 \} }
\big)
=
\sup_{ s < u < t }
\IEs \ot P_{ \Fock_{u-s}^\perp \ot \Fock_{t-u}^\perp } \ot \IEoneminust
=
\IEs \ot P_V \ot \IEoneminust
\]
where
$
V
=  \vee_{ s < u < t }
\,
\big(
\Fock_{u-s}^\perp \ot \Fock_{t-u}^\perp
\big)
=
\Fminustminuss
$.
By the evolution property,
\[
P_V^\perp=
\wedgewithlimits_{s<u<t}
\big(
\IEone - ( \IEone - \PFsu ) ( \IEone - \PFut )
\big)
=
\wedgewithlimits_{s<u<t}
\big(
 \PFsu + \PFut - \PFst
\big).
\]
It therefore follows that
\begin{equation}
\label{eqn: st}
\piF
\big(
1_{ \{ Z \in \closed: \# ( Z \cap [s,t] ) \les 1 \} }
\big)
=
\IEs \ot P_{ \Fminusperptminuss } \ot \IEoneminust
=
P^{ \Fminusperp }_{s,t}.
\end{equation}
Now
\begin{equation}
\label{eqn: CP}
\big\{ Z \in \closed: \# ( Z \cap [s,t] ) < \infty \big\}
=
\bigcup \CP
\end{equation}
where
the union is over partitions
$\mathcal{P} = \{ s = s_0 < \cdots < s_N = t \}$
and
$
\CP :=
\bigcap_{i=1}^N
\big\{
Z \in \closed: \# ( Z \cap [s_{i-1},s_i] ) \les 1
\big\}
$.
And so,
applying~\eqref{eqn: st}
with $[s_{i-1},s_i]$ in place of $[s,t]$,
\[
\piF ( 1_{\CP} ) =
\prod_{i=1}^N P^{\Fminusperp}_{s_{i-1},s_i}
\ \text{ for } \
\mathcal{P} = \{ s = s_0 < \cdots < s_N = t \}.
\]
Therefore,
by~\eqref{eqn: CP},
the normality of $\piF$,
and
the fact that the inclusion system $\Fminusperp$
generates the product system $\F\,\check{}$,
\[
\PFcheckst
=
\piF
\big(
1_{ \{ Z \in \closed: \# ( Z \cap [s,t] < \infty \} }
\big).
\]
Combined with~\eqref{eqn: fin},
this proves (a).
Now (a) implies that
\[
\omega \Big(
\prod\nolimits_{i=1}^N \PFchecksiti
\Big)
=
( \omega \circ \piF ) \Big( 1_{ \bigcap\nolimits_{i=1}^N \Delta^{-1}(\Msiti) } \Big)
=
\big( \ProbFomega \circ \Delta^{-1} \big) \Big( \bigcap\nolimits_{i=1}^N  \Msiti \Big),
\]
for subintervals $[s_1, t_1], \cdots , [s_N, t_N]$ of $\mathbb{I}$,
so (b) follows from part (b) of Theorem~\ref{thm: Lie random}.

(c)
In view of Theorem~\ref{thm: MF},
this follows immediately from (b).
\end{proof}

%%%%%%%%%%%%%%%%%%%%%%%%%%%%%%%%%%%%%%%%%%%%% New section

 \section*{Appendix.
 Fock Arveson systems and the Guichardet picture}
% \label{B} - not referred to
 \label{section: appendix}

The symmetric Fock space over a Hilbert space $\Hil$ is denoted $\Gamma(\Hil)$.
Its exponential vectors
$\varepsilon(h) := \big( (n!)^{-1/2} h^{\ot n} \big)_{n \ges 0}$
($h \in \Hil$)
form a linearly independent and total set
which witnesses the exponential property of symmetric Fock spaces,
namely
$\Gamma(\Hil_1 \op \Hil_2) = \Gamma(\Hil_1) \ot \Gamma(\Hil_2)$
via
$\ve(h_1, h_2) \mapsto \ve(h_1) \ot \ve(h_2)$.
For any contraction $C \in B(\Hil)$,
$\Gamma(C) := \bigoplus_{n \ges 0} C^{\ot n}$ defines a contraction in
$B( \Gamma(\Hil))$
characterised by the identity
$\Gamma(C) \ve(h) = \ve(Ch)$ ($h \in \Hil$);
the map
$C \to \Gamma(C)$ is a morphism of involutive semigroups with identity,
in particular,
$\Gamma(C)$ is isometric, respectively coisometric, if $C$ is.
For $h \in \Hil$,
the \emph{Fock--Weyl operator}
is the unitary operator $W(h)$ on $\Gamma( \Hil )$
characterised by the identity
\[
W(h) \vp(k) = e^{ -i \im \ip{h}{k} } \vp( h+k ),
\ \text{ where } \
\vp(k) := e^{ - \norm{k}^2/2 } \ve(k)
\qquad
(k \in \Hil).
\]

Now let $\kil$ be a separable Hilbert space.
Set
\[
\Kil := L^2(\Rplus; \kil)
\ \text{ and } \
\Kil_t := \big\{ g \in \Kil: \esssupp g \subset [0,t] \big\}
\qquad
(t > 0),
\]
and let $S^\kil := ( S^\kil_t )_{t \ges 0}$
denote the one-parameter semigroup of unilateral shifts on $\Kil$.
The \emph{Fock Arveson system over $\kil$},
denoted $\Fockk$, is defined by
\[
\Fockkt :=
\Gamma( L^2([0,t[; \kil)) \ot \Vack_{[t,\infty[}
=
\Linbar \{ \ve(g) : g \in \Kil_t \},
\qquad(t > 0)
\]
where $\Vack_{[t,\infty[}$ denotes
the vacuum vector $\ve(0)$ in $\Gamma( L^2([t, \infty[; \kil))$,
with structure maps determined by the prescription
\[
\BFkst : \ve(h) \mapsto \ve( h_{[0,s[})  \ot \ve( (S^\kil_s)^* h ),
\ \text{ for } \
h \in \Kil_{s+t}
\qquad
(s,t > 0).
\]
It is an Arveson system
consisting of an increasing family of subspaces of the Hilbert space
$\Fockkinfty = \Gamma(\Kil)$.
Its set of normalised units is given by
$
\{
( e^{i \lambda t} \vp^c_t )_{t>0}: c \in \kil, \lambda \in \Real
\}
$
where
\[
\vp^c :=
\big( e^{- \norm{c}^2 t/2} \ve^c_t = \vp( c_{[0,t[} ) \big)_{t>0}
\ \text{ and } \
\ve^c := \big( \ve( c_{[0,t[} ) \big)_{t>0}.
\]
The \emph{vacuum unit} $\vp^0 = \ve^0$, of the Arveson system $\Fockk$, is denoted $\Vack$.

In order to describe the Guichardet picture of the Fock Arveson system over $\kil$ which
(only here in this appendix) we denote by $\Guichk$,
we need to introduce the symmetric measure space $\Gamma_t$ over the Lebesgue space $[0,t[$,
for $0 < t \les \infty$ (\cite{Guichardet}).
As a set,
\[
\Gamma_t :=
\big\{
\sigma \subset [0,t[: \# \sigma < \infty
\big\}.
 \]
Thus, denoting
 $\{
\sigma \subset [0,t[: \# \sigma = n
\}$
by $\Gamma_t^{(n)}$,
$\bigcup_{n\ges 0} \Gamma_t^{(n)}$ is a partition of $\Gamma_t$.
Since, for each $n \in \Nat$, the map
\[
\Delta^{(n)}_t := \big\{ {\bf s} \in \Rplus^n: s_1 < \cdots < s_n < t \big\}
\to
\Gamma_t^{(n)},
\quad
{\bf s} \mapsto \{ s_1, \cdots , s_n \}
\]
is bijective,
Lebesgue measure on $\Delta^{(n)}_t$ induces a measure on $\Gamma_t^{(n)}$
and thereby an isometric isomorphism
$
L^2( \Delta^{(n)}_t )
\to
L^2( \Gamma_t^{(n)} )
$,
which ampliates to an isometric isomorphism
$
L^2( \Delta^{(n)}_t ; \kil^{\ot n} )
\to
L^2( \Gamma_t^{(n)}; \kil^{\ot n} )
$.
Therefore,
composing with the isometric isomorphism
\[
L^2_{\sym} ( [0,t]^n; \kil^{\ot n} ) \to L^2( \Delta^{(n)}_t ; \kil^{\ot n} ),
\quad
F \mapsto \sqrt{n!} \, F |_{ \Delta^{(n)}_t }
\]
gives an isometric isomorphism
\[
L^2_{\sym} ( [0,t]^n; \kil^{\ot n} )
\to
L^2( \Gamma_t^{(n)}; \kil^{\ot n} )
\qquad
(n \in \Nat).
\]
By declaring that $\emptyset \in \Gamma_t^{(0)} \subset \Gamma_t$
is an atom of measure one,
we arrive at an isometric isomorphism
\[
\Fockkt
\cong
\Comp \op \bigoplus_{n=1}^\infty L^2_{\sym} ( [0,t]^n; \kil^{\ot n} )
\cong
\Comp \op \bigoplus_{n=1}^\infty L^2( \Gamma_t^{(n)}; \kil^{\ot n} )
\cong
\Guichkt,
\]
where
$\Guichkt := \big\{ G \in \Guichkinfty: \esssupp G \subset \Gamma_t \big\}$
and,
in terms of $\Phi( \kil)$, the full Fock space over $\kil$,
\[
\Guichkinfty :=
\big\{
G \in L^2( \Gamma_\infty; \Phi(\kil)):
G(\sigma) \in \kil^{ \ot \# \sigma} \text{ for a.a. } \sigma
\big\}.
\]
These isomorphisms are restrictions of a single isomorphism
$\Fockkinfty \to \Guichkinfty$, under which
$\ve(g)$ maps to $\pi_g$, for $g \in \Kil$, where
\[
\pi_g(\sigma) :=
\left\{
 \begin{array}{ll}
 1
 \in \Comp = \kil^{\ot 0}
 & \text{ if }
 \sigma = \emptyset,
 \\
 g(s_1) \ot \cdots \ot g(s_n)
 \in \kil^{\ot n}
 & \text{ if }
 \sigma = \{ s_1 < \cdots < s_n \}
 \end{array}, \right.
\]
in particular,
$\ve(0) \mapsto \delta_\emptyset$.
Moreover,
for $G \in \Guichkinfty$ and $t \ges 0$,
\[
\big( \Gamma(S^\kil_t) G \big)(\sigma) =
\left\{
 \begin{array}{ll}
 G( \sigma - t)
 & \text{ if }
 \sigma \subset [t, \infty[
 \\
 0
 & \text{ otherwise }
 \end{array}. \right.
\]
The corresponding structure maps in the Guichardet picture
are given by the prescription
\[
\BGkst H: (\alpha, \beta) \mapsto
H \big( \alpha  \cup ( \beta + s ) \big) 1_{\Gamma_{[0,s[} \times \Gamma_{[0,t[}} (\alpha, \beta)
\qquad
( H \in \Guichksplust ).
\]

For further details on Fock space and the Guichardet picture,
see~\cite{Lin-QSA} and~\cite{Par-QSC}.

\section*{Note added in proof}
Additive units for the coisometric measurable counterpart to inclusion systems
(called super product systems) have been independently introduced and applied in~\cite{MS1}; see also~\cite{MS2}.

\section*{Acknowledgements}
Valuable discussions with Volkmar Liebscher are gratefully acknowledged.
This work was supported by
the UK-India Education and Research Initiative (UKIERI),
 under the research collaboration grant
 \emph{Quantum Probability, Noncommutative Geometry \& Quantum
 Information}.
 The third-named author was also suported by
 DST-Inspire Fellowship IFA-13 MA 20.

\end{document}